\titleformat*{\section}{\bfseries\sffamily}
\titleformat*{\subsection}{\bfseries\sffamily}
\renewcommand{\title}[1]{\gdef\@title{\Large\bfseries\sffamily#1}}
\theoremstyle{definition}
\newtheorem{thm}{Theorem}[section]
\newtheorem{definition}[thm]{Definition}
\newtheorem{prop}[thm]{Proposition}
\newtheorem{lem}[thm]{Lemma}
\newtheorem{rem}[thm]{Remark}
\newtheorem{cor}[thm]{Corollary}
\numberwithin{equation}{section}
\newcommand{\Spec}{\operatorname{Spec}}
\newcommand{\rank}{{\rm rk}}
\newcommand{\Hom}{{\rm Hom}}
\newcommand{\Coh}{\rm Coh}
\newcommand{\Rep}{{\rm Rep}_k}
\newcommand{\Vecf}{{\rm Vec}_k}
\newcommand{\Qcoh}{{\rm Qcoh}}
\newcommand{\Aut}{{\rm Aut}}
\newcommand{\Dim}{{\rm dim}}
\newcommand{\End}{{\rm End}}
\newcommand{\GL}{{\rm GL}}
\newcommand{\Ob}{{\rm Ob}}
\newcommand{\Ker}{{\rm Ker}}
\newcommand{\im}{{\rm Im}}
\newcommand{\Cok}{{\rm Coker}}
\newcommand{\Det}{{\rm det}}
\newcommand{\Deg}{{\rm deg}}
\newcommand{\Sch}{{\rm Sch}_k}
\newcommand{\Var}{{\rm Var}_k}
\newcommand{\Cv}{{\rm Cv}_k}
\newcommand{\Min}{{\rm min}}
\newcommand{\uni}{{\rm uni}}
\newcommand{\calO}{\mathcal{O}}
\newcommand{\calE}{\mathcal{E}}
\newcommand{\calF}{\mathcal{F}}
\newcommand{\calC}{\mathcal{C}}
\title{AN EXTENSION OF NORI FUNDAMENTAL GROUP}
\author{Shusuke Otabe
\footnote{Mathematical Institute, Tohoku University, 6-3 Aramakiaza, Aoba, Sendai, Miyagi 980-8578, Japan;
E-mail: {sb4m06@math.tohoku.ac.jp}
}}
\affil{Mathematical Institute, Tohoku University, Sendai, Japan}
\date{\vspace{-12mm}}
\begin{document}

\maketitle

\begin{abstract}
In this paper, we study a certain extension of Nori's fundamental group in the case where a base field is of characteristic 0 and give structure theorems about it. As a result for a smooth projective curve with genus $g>1$, we prove that Nori's fundamental group acts faithfully on the category of unipotent bundles on the universal covering. In the case when $g=1$, we give a more finer result.
\end{abstract}

\begin{flushleft}
\textit{Key words}: Fundamental group schemes; finite bundles;  unipotent bundles; Tannaka duality.

\textit{2010 Mathematics Subject Classification}:14L15;14F05;14H30.
\end{flushleft}

\thispagestyle{empty}

\section{INTRODUCTION}

In ~\cite{no76}~\cite[Chapter I]{no82}, Nori defined a fundamental group scheme $\pi_1^{N}(X, x)$ for a proper reduced connected variety $X$ over a perfect field $k$ together with a rational point $x\in X(k)$. It is the Tannakian fundamental group of the category $\mathcal{C}^{N}(X)$ of \textit{essentially finite bundles} on $X$ with respect to a fiber functor $\mathcal{E}\mapsto x^{*}\mathcal{E}$.  Here an essentially finite bundle is a vector bundle trivialized by a finite torsor  $\pi:P\to X$.  Nori's fundamental group $\pi_1^N(X,x)$ is a pro-finite group scheme controlling all pointed finite torsors $(P,p)\to (X,x)$. If $k$ is an algebraically closed field of characteristic 0, then a finite torsor is nothing but a connected Galois \'etale covering over $X$, so in this case, $\pi_1^N(X,x)$ is none other than the geometric \'etale fundamental group $\pi_1(X,\overline{x})$ of $X$~\cite{gr71}. 

In \cite[Chapter II]{no82}, Nori also studied the pro-unipotent group scheme $\pi_1^{\uni}(X,x)$ controlling all \textit{unipotent torsors} over $(X,x)$. The group $\pi_1^{\uni}(X,x)$ is defined as the Tannakian fundamental group of the category $\calC^{\uni}(X)$ of \textit{unipotent bundles} on $X$ with respect to the fiber functor $x^*$. Here a vector bundle is said to be \textit{unipotent} if it is obtained by successive extensions of the trivial bundle $\mathcal{O}_X$.

 In this paper, we consider and study a bigger fundamental group $\pi_1^{EN}(X, x)$, which has, as a quotient, both $\pi_1^{N}(X,x)$ and $\pi_1^{\uni}(X,x)$, in the case where a base field is of characteristic 0. The fundamental group $\pi_1^{EN}(X, x)$ is defined as follows. A vector bundle $\mathcal{E}$ on $X$ is said to be \textit{semifinite} if it is obtained by successive extensions of essentially finite bundles. We denote by $\mathcal{C}^{EN}(X)$ the category of semifinite bundles on $X$. Then we find that  $\mathcal{C}^{EN}(X)$ is a neutral Tannakian category over $k$ (Proposition \ref{prop:cat of semi-finite}) and we define $\pi_1^{EN}(X, x)$ as its Tannakian fundamental group. If $k$ is of characteristic 0, then it turns out that the category $\mathcal{C}^{N}(X)$ is semi-simple (Remark \ref{rem:rem of semi-finite2}) and it is not closed under taking extensions in $\Coh(X)$. The category $\mathcal{C}^{EN}(X)$ is larger than $\mathcal{C}^{N}(X)$. In particular, it contains the category $\calC^{\uni}(X)$. Therefore, from the Tannaka duality, we obtain the following diagram:
\begin{equation}\label{eq:int-diag}
\begin{CD}
\pi_1^{EN}(X,x)@>p^{\uni}>>\pi_1^{\uni}(X,x)\\
@Vp^{N}VV @. @.\\
\pi_1^N(X,x)@.
\end{CD}
\end{equation}
Note that both $p^{N}$ and $p^{\uni}$ are surjective. The aim of this paper is to study $\pi_1^{EN}(X,x)$.

 Now we state the main results. First we study $\pi_1^{EN}$ in the case where $X$ is an elliptic curve (Section 3). In this case, by using Atiyah's result~\cite{at57}, we can classify completely all indecomposable semifinite bundles on $X$ (Lemma \ref{lem:semi-finite on elliptic}) and we obtain the following~(Theorem \ref{thm:main1}):

\begin{thm}\label{thm:int-main1}
Let $X$ be an elliptic curve defined over an algebraically closed field $k$ of characteristic 0 and with  $x=0\in X(k)$. Then there exists an isomorphism of group schemes
\begin{equation*}
\pi_1^{EN}(X, x)\simeq \pi_1^{N}(X, x)\times\pi_1^{\uni}(X, x).
\end{equation*}
\end{thm}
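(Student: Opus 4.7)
My approach would be to upgrade the diagram (\ref{eq:int-diag}) to an isomorphism by exhibiting an equivalence of neutral Tannakian categories
\[
\calC^N(X)\boxtimes\calC^{\uni}(X)\ \xrightarrow{\ \sim\ }\ \calC^{EN}(X),\qquad (\calE,\mathcal{U})\longmapsto\calE\otimes\mathcal{U},
\]
where the left-hand side is Deligne's tensor product. Via Tannaka duality, together with the general fact that the Tannakian fundamental group of a Deligne tensor product is the product of the two Tannakian groups, this would yield the desired isomorphism, and the projections on the right would automatically be identified with $p^N$ and $p^{\uni}$ from (\ref{eq:int-diag}).

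Essential surjectivity of this functor is precisely where Atiyah's structure theory enters. Appealing to Lemma \ref{lem:semi-finite on elliptic}, every indecomposable semifinite bundle on $X$ has the shape $L\otimes F_r$, where $L$ is a torsion line bundle (hence essentially finite, because in characteristic $0$ the category $\calC^N(X)$ is semisimple and $\pi_1^N(X,x)$ is abelian, so every essentially finite indecomposable is a character of finite order) and $F_r$ is the Atiyah bundle of rank $r$, which is unipotent by construction as an iterated self-extension of $\calO_X$. Every object of $\calC^{EN}(X)$ is therefore a finite direct sum of such tensor products $\calE\otimes\mathcal{U}$ with $\calE\in\calC^N(X)$ and $\mathcal{U}\in\calC^{\uni}(X)$, establishing essential surjectivity.

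Full faithfulness amounts to a K\"unneth-type identity
\[
\Hom_X(\calE_1\otimes\mathcal{U}_1,\calE_2\otimes\mathcal{U}_2)\simeq\Hom_X(\calE_1,\calE_2)\otimes_k\Hom_X(\mathcal{U}_1,\mathcal{U}_2).
\]
After reducing to indecomposables, this amounts to showing that $\Hom_X(L_1\otimes F_{r_1},L_2\otimes F_{r_2})$ vanishes whenever $L_1\not\simeq L_2$ and otherwise coincides with $\Hom_X(F_{r_1},F_{r_2})$; both assertions follow from Atiyah's description of morphisms between the $F_r$'s combined with the vanishing of morphisms between non-isomorphic degree-zero line bundles on an elliptic curve. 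I expect this K\"unneth splitting---in particular, keeping track of how it interacts with the non-trivial endomorphism algebra $\End(F_r)$ and with the tensoring-by-$L$ action---to be the main technical obstacle, while the subsequent Tannakian translation into the product decomposition of $\pi_1^{EN}(X,x)$ is formal.
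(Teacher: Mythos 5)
Your plan is essentially the paper's second proof of Theorem \ref{thm:main1}: it realizes $\calC^{EN}(X)$ as the Deligne tensor product $\calC^{N}(X)\otimes\calC^{\uni}(X)$ via $(\calE,\mathcal{U})\mapsto\calE\otimes\mathcal{U}$, with essential surjectivity supplied by Lemma \ref{lem:semi-finite on elliptic}(3) and full faithfulness by the K\"unneth identity of Proposition \ref{prop:n-nil}(2). The only step you leave implicit is the passage from that identity on pure tensors to full faithfulness on arbitrary objects of the tensor product category; the paper handles this by embedding any representation of the product group into a finite direct sum of pure tensors and applying Proposition \ref{prop:appendix1} (here one could also note directly that, since $\pi_1^{N}(X,x)$ is reductive, every such representation decomposes into isotypic pieces of the form $V\otimes U$).
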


\begin{rem}\label{rem:int1}
(1) In \cite{le02}, Lekaus studied vector bundles of degree 0 on an elliptic curve~(Section 3.2) and calculated a  fundamental group of a Tannakian category generated by an indecomposable vector bundle of degree 0~(Theorem \ref{thm:lekaus}). 

(2) In \cite{la11}~\cite{la12}, Langer studied an $S$-fundamental group scheme $\pi_1^{S}(X,x)$ for a projective smooth variety in any characteristic. Here, $\pi_1^{S}(X, x)$ is the Tannakian fundamental group of the Tannakian category of \textit{Nori-semistable bundles} on $X$, which is a slightly different notion from the semistability condition considered in this paper~(Definition \ref{def:semistable}). If $k$ is of characteristic 0, $\pi_1^{EN}(X,x)$ is a quotient of $\pi_1^{S}(X,x)$. Langer gave a structure theorem of $\pi_1^{S}(X,x)$ for an abelian varirty~\cite[Theorem 6.1]{la12}. Our result for an elliptic curve (Theorem 1.1) should be compared with this theorem.
\end{rem}

Next we study $\pi_1^{EN}(X,x)$ for an arbitrary variety~(Section 4). 
In this case, it might be hard to classify all indecomposable semifinite bundles. But we can describe completely the difference between $\pi_1^{EN}$ and $\pi_1^N$. We put $\pi_1^{E}(X,x)\overset{{\rm def}}{=}\Ker(p^{N})$~(cf. (\ref{eq:int-diag})). Then we obtain the following result (Corollary \ref{cor:cor of main2}):

\begin{thm}\label{thm:int-main2}
Let $X$ be a geometrically-connected and reduced proper scheme over a field $k$ of characteristic 0. Let $\Lambda=(X_S, x_S)$ be the  pro-system of connected finite torsors over $X$ associated with $x\in X(k)$ (cf. Section 2.2). Then there is an isomorphism of group schemes
\begin{equation*}
\pi_1^{E}(X, x)\simeq\varprojlim_{\Lambda}\pi_1^{\uni}(X_S, x_S).
\end{equation*}
\end{thm}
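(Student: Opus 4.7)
My plan is to derive the corollary directly from a short exact sequence of affine group schemes which I expect is established in the preceding main theorem of Section~4, namely
$$
1 \longrightarrow \varprojlim_{\Lambda} \pi_1^{\uni}(X_S, x_S) \longrightarrow \pi_1^{EN}(X, x) \xrightarrow{\ p^N\ } \pi_1^N(X, x) \longrightarrow 1.
$$
Once this SES is in hand, the claim is immediate: by definition $\pi_1^E(X,x) = \Ker(p^N)$, and the first term of the SES is exactly this kernel.

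To set up the SES, I would proceed via Tannakian duality. Surjectivity of $p^N$ (right exactness) was already noted in the Introduction and follows from $\calC^N(X)$ being a strictly full Tannakian subcategory of $\calC^{EN}(X)$, closed under subobjects thanks to its semisimplicity in characteristic zero (Remark~\ref{rem:rem of semi-finite2}). The heart of the argument is to identify the kernel. For each connected finite torsor $f_S \colon (X_S, x_S) \to (X, x)$ in $\Lambda$ with structure group $S$, pullback $f_S^*$ gives a tensor functor $\calC^{EN}(X) \to \calC^{EN}(X_S)$, and descent in the other direction identifies $\calC^{EN}(X)$ with the category of $S$-equivariant semifinite bundles on $X_S$. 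Since every essentially finite bundle on $X$ is trivialized by some connected finite torsor in $\Lambda$, for any fixed $\calE \in \calC^{EN}(X)$ and sufficiently large $S$, the pullback $f_S^*\calE$ has all its essentially finite composition factors trivial on $X_S$ and is therefore unipotent. Dually, $S$-equivariant unipotent bundles on $X_S$ descend to semifinite bundles on $X$. This correspondence yields compatible morphisms $\pi_1^{EN}(X,x) \to \pi_1^{\uni}(X_S, x_S) \rtimes S$ for each $S$, whose inverse limit I expect to be an isomorphism; projection onto the $S$-factors recovers $p^N$.

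The main obstacle is the exactness on the left, i.e., showing that the induced morphism $\varprojlim_\Lambda \pi_1^{\uni}(X_S, x_S) \to \pi_1^E(X, x)$ is an isomorphism rather than merely a closed embedding. Here I would apply an exact-sequence criterion for Tannakian categories of Esnault--Hai or Deligne--Milne type: one must check that (i) $\calC^N(X)$ is precisely the full subcategory of $\calC^{EN}(X)$ of objects on which $\varprojlim_\Lambda \pi_1^{\uni}(X_S, x_S)$ acts trivially, and (ii) every object of $\calC^{EN}(X)$ embeds as a subobject of an object in the essential image of the descent construction from sufficiently large $X_S$. The first condition is essentially the statement that $\calE \in \calC^{EN}(X)$ is essentially finite iff, for every $S$, the pullback $f_S^*\calE$ has trivial unipotent quotient; the second follows from the iterated-extension definition of semifinite bundles combined with the cofinality of $\Lambda$. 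Once these are established, the fact that $\varprojlim$ commutes with kernels of morphisms of affine group schemes yields the corollary.
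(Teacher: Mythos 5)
Your overall strategy coincides with the paper's: realize $\pi_1^{E}(X,x)=\Ker(p^N)$ as the Tannakian fundamental group of a quotient category assembled from the categories $\calC^{\uni}(X_S)$, and prove exactness of $1\to\pi_1^{E}\to\pi_1^{EN}\to\pi_1^{N}\to 1$ by the Esnault--Hai--Sun criterion (Lemma \ref{lem:exactness}). The gap is in your verification of that criterion. For exactness at the middle term the criterion requires \emph{three} conditions, and the one you omit is the hardest: condition (b), that the \emph{maximal trivial subobject} $\calF_0$ of $\pi_S^{*}\calE$ in $\calC^{\uni}(X_S)$ is of the form $\pi_S^{*}\calE_0$ for some subobject $\calE_0\subset\calE$ in $\calC^{EN}(X)$. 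Your two listed conditions correspond roughly to (a) (triviality of the pullback characterizes essential finiteness, via (\ref{eq:section 2 equiv})) and to the generation/closed-immersion condition (which the paper gets from Lemma \ref{lem:etale-nil2} and the surjectivity of the adjunction $\pi_S^{*}{\pi_S}_{*}\calF\to\calF$). Condition (b) is not ``essentially'' anything: the paper proves it by the projection-formula computation $H^0(X_S,\pi_S^{*}\calE)\simeq\Hom_{\calO_X}(({\pi_S}_{*}\calO_{X_S})^{\vee},\calE)$ and by taking $\calE_0$ to be the image of the resulting map $\bigoplus_{i=1}^{r}({\pi_S}_{*}\calO_{X_S})^{\vee}\to\calE$, then checking $\rank\pi_S^{*}\calE_0=r$. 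Without this step the exactness in the middle, and hence the identification of the kernel, does not follow.

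Two further points. First, your intermediate targets $\pi_1^{\uni}(X_S,x_S)\rtimes S$ presuppose a splitting of the extension $1\to\pi_1^{\uni}(X_S,x_S)\to\pi(X,\overline{S},x)\to\pi(X,S,x)\to 1$; no splitting is available (or needed) at this stage --- the paper only produces one in Section 4.2 via Hochschild--Mostow, and only over an algebraically closed field, whereas the present statement is over an arbitrary $k$ of characteristic $0$. One should work with the (a priori non-split) extensions $\pi(X,\overline{S},x)$ of Remark \ref{rem:rem of main2-1}. Second, identifying the kernel with $\varprojlim_{\Lambda}\pi_1^{\uni}(X_S,x_S)$ is not a formal consequence of ``$\varprojlim$ commutes with kernels'': one must know that the transition maps $\pi_1^{\uni}(X_T,x_T)\to\pi_1^{\uni}(X_S,x_S)$ are faithfully flat, i.e.\ that $\pi_{TS}^{*}:\calC^{\uni}(X_S)\to\calC^{\uni}(X_T)$ is fully faithful with essential image closed under subobjects. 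This is Lemma \ref{lem:etale-nil1}, proved via the injectivity of $H^1(\calO_{X_S})\to H^1(\calO_{X_T})$ (which uses Proposition \ref{prop:n-nil}(2) and $\Gamma(X_T,\calO_{X_T})=k$) together with Nori's description of the dual Hopf algebra of $\pi_1^{\uni}$; it is an essential ingredient that your outline does not supply.
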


\begin{rem}\label{rem:int-ehs}
If $k$ is a field of characteristic 0, then $\pi_1^N(X,x)$ is essentially Grothendieck's geometric fundamental group $\pi_1(X,\overline{x})$. In the case of positive characteristic, the situation is different and the former is strictly larger than the latter. This difference has been calculated by Esnault-Hai-Sun~\cite[Chapter 3]{ehs08}. Their method will be adopted to give a description of the difference between $\pi_1^{N}(X,x)$ and $\pi_1^{EN}(X,x)$~(Section 4.1).
\end{rem}

From the definition of $\pi_1^{EN}(X, x)$, we have the following exact sequence:
\begin{equation}\label{eq:int-exact seq}
1\to\pi_1^{E}(X, x)\to\pi_1^{EN}(X, x)\stackrel{p^N}{\to}\pi_1^{N}(X, x)\to 1.
\end{equation}
Assume that $k$ is an algebraically closed field of characteristic 0. Then since $\pi_1^{N}(X, x)$ is reductive and $\pi_1^{E}(X, x)$ is unipotent, the exact sequence (\ref{eq:int-exact seq}) is split and the projection $p^N$ always has a section.
Each section defines a representation of $\pi_1^{N}(X, x)$:

\begin{equation}\label{eq:int action}
\rho:\pi_1^{N}(X, x)\to\Aut(k[\pi_1^{E}(X, x)]).
\end{equation}
Then we obtain the following result (Theorem \ref{thm:main3}):

\begin{thm}\label{thm:int-main3}
Under the above notation, if $X$ is a geometrically connected proper smooth curve defined over an algebraically closed field $k$ of genus $g\ge 2$, then the representation $\rho$~(\ref{eq:int action})
is faithful.
\end{thm}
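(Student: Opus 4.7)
The plan is to pass from $\pi_1^{E}(X,x)$ to its abelianization and there reduce the statement to the classical fact that the automorphism group of a curve of genus $\ge 2$ acts faithfully on $H^{0}(\Omega^{1})$.

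The commutator subgroup $[\pi_1^{E},\pi_1^{E}]$ is characteristic in $\pi_1^{E}$, so $\rho$ descends to a homomorphism $\rho^{\rm ab}\colon\pi_1^{N}(X,x)\to\Aut(k[\pi_1^{E}(X,x)^{\rm ab}])$, and the inclusion $\Ker(\rho)\subset\Ker(\rho^{\rm ab})$ reduces the problem to showing $\rho^{\rm ab}$ is faithful. Note that $\rho^{\rm ab}$ does not depend on the choice of section $s$: two sections differ by an element of $\pi_1^{E}$, whose inner action is trivial on the abelianization. By Theorem~\ref{thm:int-main2} one has $\pi_1^{E}(X,x)^{\rm ab}\simeq\varprojlim_{\Lambda}\pi_1^{\uni}(X_S,x_S)^{\rm ab}$, and standard Tannakian duality for pro-unipotent groups identifies $\pi_1^{\uni}(X_S,x_S)^{\rm ab}$ with the vector group dual to $\operatorname{Ext}^{1}_{\calC^{\uni}(X_S)}(\calO_{X_S},\calO_{X_S})=H^{1}(X_S,\calO_{X_S})$. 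Under this identification the induced action of $\pi_1^{N}(X,x)$ on each factor should factor through the Galois quotient $G_S:=\pi_1^{N}(X,x)/\pi_1^{N}(X_S,x_S)=\operatorname{Gal}(X_S/X)$ and coincide with the natural pullback action of $G_S$ on $H^{1}(X_S,\calO_{X_S})$.

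The argument is then completed using the classical fact that for a smooth proper curve $Y$ of genus $\ge 2$ over an algebraically closed field of characteristic $0$, the action $\Aut(Y)\to\GL(H^{0}(Y,\Omega^{1}_{Y}))$ is injective. Indeed, an automorphism $\sigma$ fixing every holomorphic $1$-form must commute with the canonical morphism $\phi_K\colon Y\to\mathbb{P}^{g(Y)-1}$; either $\phi_K$ is a closed immersion (non-hyperelliptic case), forcing $\sigma=\operatorname{id}$, or $\phi_K$ factors through the hyperelliptic double cover $Y\to\mathbb{P}^{1}$ and the only possible nontrivial $\sigma$ is the hyperelliptic involution, which however acts as $-1$ on $H^{0}(\Omega^{1}_{Y})$. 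By Serre duality the action on $H^{1}(Y,\calO_Y)$ is then also faithful. Since $g\ge 2$ and each $X_S\to X$ is finite \'etale, Riemann--Hurwitz gives $g(X_S)\ge 2$, so each $G_S\hookrightarrow\Aut(X_S)$ acts faithfully on $H^{1}(X_S,\calO_{X_S})$. An element $\alpha\in\Ker(\rho^{\rm ab})$ therefore has trivial image in every $G_S$, and since $\pi_1^{N}(X,x)=\varprojlim_{S}G_S$ (as $\pi_1^{N}$ is pro-finite in characteristic $0$), $\alpha=1$.

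The main obstacle is the identification asserted at the end of the second paragraph: one must trace the Levi-type section $s$ through Theorem~\ref{thm:int-main2} and verify that, after projecting to each $\pi_1^{\uni}(X_S,x_S)^{\rm ab}$, the conjugation action really coincides with the geometric Galois action on $H^{1}(X_S,\calO_{X_S})$. This should follow from the functoriality of the inverse system $\Lambda$ together with the observation that essentially finite bundles on $X$ trivialised along $X_S\to X$ correspond to $G_S$-representations, so that pushforward of unipotent bundles intertwines the Tannakian conjugation action with the natural Galois action on cohomology.
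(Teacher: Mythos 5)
Your route is genuinely different from the paper's. The paper's proof of Theorem \ref{thm:main3} does not abelianize: it reduces to the finite-level representations $\rho_S:\pi(X,S,x)\to\Aut(k[\pi_1^{\uni}(X_S,x_S)])$ and argues by contradiction that a nontrivial kernel $G_{ST}=\Ker(\rho_S)$ would split the extension $1\to\pi_1^{\uni}(X_S,x_S)\to\pi(X_T,\overline{\langle\calE\rangle},x_T)\to G_{ST}\to 1$ into a direct product, forcing an equivalence $\calC^{\uni}(X_T)\simeq\calC^{\uni}(X_S)$ and hence $g_{X_T}=g_{X_S}$ (since $\pi_1^{\uni}$ of a curve is free pro-unipotent on $g$ generators, Remark \ref{rem:genus}), contradicting Riemann--Hurwitz. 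So the paper only needs that $\pi_1^{\uni}$ \emph{detects the genus}, and never has to identify the conjugation action with anything geometric. Your argument instead passes to $(\pi_1^{E})^{\rm ab}\simeq\varprojlim_S H^1(X_S,\calO_{X_S})^{\vee}$ and invokes the classical injectivity of $\Aut(Y)\to\GL(H^0(Y,\Omega^1_Y))$ for $g(Y)\ge 2$; that classical fact, the independence of $\rho^{\rm ab}$ from the choice of section, and the Riemann--Hurwitz bound $g(X_S)\ge 2$ are all correct.

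The gap is exactly the one you flag, and as written it is the crux of the proof rather than a footnote: you must prove (i) that the conjugation action of $\pi_1^N(X,x)$ on $\pi_1^{\uni}(X_S,x_S)^{\rm ab}$ factors through $G_S=\pi(X,S,x)$, and (ii) that the resulting $G_S$-action on $H^1(X_S,\calO_{X_S})$ is the deck-transformation action. Point (i) is not automatic from ``functoriality of $\Lambda$''; it requires the Hochschild--Mostow argument the paper gives, namely that $g_S\circ t$ is conjugate by an element of $\pi_1^{\uni}(X_S,x_S)$ into a Levi of $\pi(X,\overline{S},x)$ and therefore kills $\Ker(\pi_1^N(X,x)\to G_S)$ (harmless on the abelianization, but it must be said). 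For (ii) a clean proof is to compute $\operatorname{Ext}^1_{\calC^{EN}(X)}(\calO_X,{\pi_S}_*\calO_{X_S})\simeq H^1(X_S,\calO_{X_S})$ in two ways: geometrically via the projection formula, which exhibits the Galois action on the cohomology of the cover, and group-theoretically via restriction to the normal subgroup $\pi_1^{\uni}(X_S,x_S)$ followed by $G_S$-invariants (legitimate since $G_S$ is finite and ${\rm char}\,k=0$), which exhibits the conjugation action on $\Hom(\pi_1^{\uni}(X_S,x_S),\mathbb{G}_a)$. Both identifications are true, so your strategy does yield a complete and arguably more conceptual proof once they are written out; the isomorphism $(\pi_1^E)^{\rm ab}\simeq\varprojlim_S(\pi_1^{\uni}(X_S,x_S))^{\rm ab}$ (commuting abelianization with the inverse limit along the faithfully flat transition maps) also deserves a line of justification.
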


Finally, we discuss on the basic properties of $\pi_1^{EN}$~(Section 5). First recall that both Nori's fundamental group $\pi_1^{N}$ and the unipotent fundamental group $\pi_1^{\uni}$ have the base change property  for algebraic separable extensions of a base field and satisfy the  K\"unneth formula \cite{ms02}\cite{no82}~(See also Remark \ref{rem:kunneth}). Moreover, Esnault-Hai-Viehweg and Zhang studied on the homotopy sequence of Nori fundamental group $\pi_1^N$ and  they give a necessary and sufficient condition, in terms of the category of essentially finite bundles, in order that the homotopy sequence of $\pi_1^N$ for a separable proper family is exact~\cite[Theorem 3.1]{zh13}. 
In the final section, we prove the base change property and the  K\"unneth formula for our fundamental group $\pi_1^{EN}$~(Proposition \ref{prop:base change for pi1^EN} and Proposition \ref{prop:kunneth for pi1^EN}). Moreover, as the referee suggested, we include a discussion on the homotopy exact sequence for $\pi_1^{EN}$ in Section 5.2. 

Finally, we give a remark on a related work by Borne and Vistoli:

\begin{rem}\label{rem:int2}
After we submitted the present paper, the author learned that Borne-Vistoli independently carried out a similar research. We learned it a silde for a talk at a conference~\cite{bv14}.
\end{rem}

\section*{NOTATION}
Throughout this paper, $k$ always means a perfect field.  We denote by $\Sch$ the category of separated schemes of finite type over $k$. We denote by $\Var$ (resp. $\Cv$) the full subcategory of $\Sch$ consisting of geometrically-connected and reduced proper schemes over $k$ (resp.  consisting of geometrically-connected and reduced proper schemes over $k$ of dimension 1.  

Let $X\in\Ob(\Sch)$. For each $\mathcal{E}, \mathcal{F}\in\Ob(\Qcoh (X))$, we denote by $\Hom _{\calO_X}(\mathcal{E}, \mathcal{F})$ the set of morphisms in $\Qcoh (X)$ and by $\Gamma (X, \mathcal{E})$, or $\Gamma(\calE)$ the set of global sections.

We denote by $\Vecf$ the category of finite dimensional vector spaces over $k$. For an affine group scheme $G$ over $k$, we denote by $\Rep (G)$ the category of finite dimensional left linear representations of $G$ over $k$.  

Let $\mathcal{C}$ be a $k$-linear abelian category. For each family $A\subset\Ob(\mathcal{C})$, we denote by $\langle A\rangle$ the full subcategory of $\mathcal{C}$ consisting the subquotients of $P_1\oplus\dots \oplus P_s$ for some $P_1,\dots, P_s\in A$. Namely, for each $W\in\Ob(\langle A\rangle)$, there exist $V_1, V_2\in\Ob(\mathcal{C})$ and $P_1,\dots, P_s\in A$ such that $V_1\subset V_2\subset P_1\oplus\dots\oplus P_s$ and $W\simeq V_2/V_1$ in $\mathcal{C}$. Note that $\langle A\rangle$ is also a $k$-linear abelian category. If $A=\{P\}$, then we simply write $\langle P\rangle$ for $\langle\{P\}\rangle$.

Let $\mathcal{C}$ be a $k$-linear abelian rigid tensor category. For each object $P\in\Ob(\mathcal{C})$, we define the full subcategory $\langle P\rangle_{\otimes}$ of  $\mathcal{C}$ by $\langle P\rangle_{\otimes}\overset{{\rm def}}{=}\langle P^{\otimes n}\otimes (P^{\vee})^{\otimes m}|n,m>0\rangle$. Note that $\langle P\rangle_{\otimes}$ is also a $k$-linear abelian rigid tensor category.


\section{SEMIFINITE BUNDLES}

\subsection{Preliminaries}

In this subsection, we recall some basic facts of vector bundles. Let $k$ be a perfect field and $X\in\Ob(\Sch)$. A vector bundle $\mathcal{E}$ on $X$ is a locally free sheaf on $X$ of finite rank. For each vector bundle $\mathcal{E}$ on $X$, we can associate an $X$-scheme $\mathbb{V}(\mathcal{E})$ by $\mathbb{V}(\mathcal{E})\overset{{\rm def}}{=}\Spec (\mathbb{S}(\mathcal{E}^{\vee}))$, where $\mathbb{S}(\mathcal{E}^{\vee})=\bigoplus_{n\ge0}\mathbb{S}^{n}(\mathcal{E}^{\vee})$ is the \textit{symmetric algebra} of $\mathcal{E}^{\vee}$. 
This correspondence $\mathcal{E}\mapsto \mathbb{V}(\mathcal{E})$ is functorial. 
A locally free subsheaf $\mathcal{F}$ of a vector bundle $\mathcal{E}$ is called a \textit{subbundle} if the corresponding morphism $\mathbb{V}(\mathcal{F})\to\mathbb{V}(\mathcal{E})$ is a closed immersion. In this case, the quotient $\mathcal{E}/\mathcal{F}$ is also a vector bundle. 
Let $X\in\Ob(\Var)$ and let $\mathcal{V}(X)$ be the full subcategory of $\Coh (X)$ consisting of vector bundles on $X$. Now for any $\mathcal{E}, \mathcal{F}\in\Ob(\mathcal{V}(X))$ the module $\Hom_{\calO_X}(\mathcal{E}, \mathcal{F})$ is finite dimensional over $k$ and the Krull-Schmidt theorem holds in $\mathcal{V}(X)$~\cite{at56}. More precisely, any vector bundle $\mathcal{E}$ on $X$ can be written as a finite direct sum of indecomposable bundles and such a decomposition is unique up to isomorphisms.  

Let $X\in\Ob(\Cv)$ with $X$ smooth, and $\mathcal{E}$ a vector bundle on $X$ of rank $n$. The \textit{determinant} $\Det (\mathcal{E})$ of $\mathcal{E}$ is the $n$-th exterior power $\wedge^{n}{\mathcal{E}}$ of $\mathcal{E}$, which is an invertible sheaf. The \textit{degree} of $\mathcal{E}$ means the degree of $\Det (\mathcal{E})$ and the quotient $\mu(\mathcal{E})\overset{{\rm def}}{=}\Deg (\mathcal{E})/n$ is called the \textit{slope} of $\mathcal{E}$. 

\begin{definition}\label{def:semistable on curve}
Let $X\in\Ob(\Cv)$ with $X$ smooth. A  vector bundle $\mathcal{E}$ on $X$ is said to be \textit{stable} (resp. \textit{semistable}) if $\mu(\mathcal{F})< \mu(\mathcal{E})$ (resp. $\mu(\mathcal{F})\le\mu(\mathcal{E})$) for any proper nonzero subbundle $\mathcal{F}$ of $\mathcal{E}$. 
\end{definition}

Nori introduced the notion of \textit{semistable bundles} on an arbitrary proper variety:

\begin{definition}\label{def:semistable}(\cite{no76}\cite{no82})
Let $X\in\Ob(\Var)$. A \textit{curve} $C$ in $X$ is a smooth curve $C\in\Ob(\Cv)$ together with a morphism $C\to X$ which is birational onto its image. We say that a vector bundle $\mathcal{E}$ on $X$ is \textit{semistable} if for any curve $f:C\to X$ in $X$, the inverse image $f^{*}\mathcal{E}$ is semistable of slope 0 on $C$ in the sense of Definition \ref{def:semistable on curve}. We define the category $\mathcal{S}(X)$ as the full subcategory of $\Coh (X)$ consisting of semistable bundles on $X$. 
\end{definition}

Nori proved the following:

\begin{prop}(Nori)\label{semistable1}
Let $X\in\Ob(\Var)$.  Then $\mathcal{S}(X)$ is a $k$-linear abelian category. 
\end{prop}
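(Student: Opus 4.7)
The plan is to prove the proposition in two stages: first establish it on smooth proper curves via the classical slope argument, and then bootstrap to the general case through the curve criterion built into Nori's definition. Since $\mathcal{S}(X)$ is a full subcategory of $\mathcal{V}(X)$, it is visibly $k$-linear and stable under finite direct sums, so given a morphism $\phi\colon\mathcal{E}\to\mathcal{F}$ in $\mathcal{S}(X)$, the real task is to show that the sheaf-theoretic $\Ker(\phi)$, $\im(\phi)$ and $\Cok(\phi)$ (which exist in $\Coh(X)$) are vector bundles on $X$ and are again Nori-semistable; the remaining axioms of an abelian category then descend from those of $\Coh(X)$.

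For the curve case $X=C\in\Ob(\Cv)$ smooth, subsheaves of a vector bundle are automatically locally free because the local rings are discrete valuation rings, so $\Ker(\phi)$ and $\im(\phi)$ are vector bundles, and after killing torsion the cokernel is as well. The familiar two-sided slope inequality then applies: $\mu(\im(\phi))\le 0$ because $\im(\phi)\subset\mathcal{F}$ is a subbundle of a semistable slope-$0$ bundle, and $\mu(\im(\phi))\ge 0$ because it is a quotient of the semistable slope-$0$ bundle $\mathcal{E}$. Hence $\mu(\im(\phi))=0$, and additivity of the slope in short exact sequences forces $\mu(\Ker(\phi))=\mu(\Cok(\phi))=0$. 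Each of the three is then semistable of slope $0$, because any destabilising subbundle would lift to a destabilising subbundle of $\mathcal{E}$ or $\mathcal{F}$, contradicting the assumed semistability.

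For a general $X\in\Ob(\Var)$, the hard part — and what I expect to be the main obstacle — is proving that $\Ker(\phi)$, $\im(\phi)$ and $\Cok(\phi)$ are vector bundles on $X$, since on a higher-dimensional reduced proper variety a subsheaf of a vector bundle is typically not locally free. My strategy is to use Nori's curve criterion to force the pointwise rank of $\phi$ to be constant on $X$: for every curve $f\colon C\to X$, the curve case applied to $f^{*}\phi\colon f^{*}\mathcal{E}\to f^{*}\mathcal{F}$ makes $\ker(f^{*}\phi)$, $\im(f^{*}\phi)$ and $\cok(f^{*}\phi)$ locally free of ranks that depend only on $\rank\mathcal{E}$, $\rank\mathcal{F}$ and the generic rank of $\phi$. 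Using that through every closed point of the reduced variety $X$ there passes a curve $C\to X$ in the sense of Definition~\ref{def:semistable}, one deduces that the function $x\mapsto\rank\phi(x)$ is constant on closed points of $X$; a morphism of vector bundles of locally constant pointwise rank has locally free kernel, image and cokernel, which settles local freeness.

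Finally, once $\im(\phi)$ is known to be a subbundle of $\mathcal{F}$, the two short exact sequences
\[
0\to\Ker(\phi)\to\mathcal{E}\to\im(\phi)\to 0,\qquad 0\to\im(\phi)\to\mathcal{F}\to\Cok(\phi)\to 0
\]
remain exact after pullback along any curve $f\colon C\to X$. Identifying these pullbacks with $\ker(f^{*}\phi)$, $\im(f^{*}\phi)$ and $\cok(f^{*}\phi)$, the curve case of the proposition yields that each of $\Ker(\phi)$, $\im(\phi)$ and $\Cok(\phi)$ is Nori-semistable on $X$, completing the verification of the abelian axioms.
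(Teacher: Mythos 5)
The paper gives no proof of this proposition: it is attributed to Nori and the reader is referred to \cite[Lemma 3.6]{no76}. Your argument is precisely Nori's: settle the smooth curve case by the two-sided slope inequality, then reduce the general case to curves by showing that the fibrewise rank of $\phi$ is constant, so that kernel, image and cokernel are locally free on the reduced scheme $X$ and their semistability can be tested curve by curve. Two points in your write-up need to be tightened before the argument closes.

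First, in the curve case the phrase ``after killing torsion the cokernel is [a vector bundle]'' is not a move you are entitled to make: if the sheaf-theoretic cokernel had torsion, the categorical image of $\phi$ in $\mathcal{S}(C)$ (the kernel of $\mathcal{F}\to\Cok\phi$) would be the saturation $\overline{\im\phi}$, while the coimage would be $\im\phi$ itself, and the axiom $\mathrm{coim}\cong\mathrm{im}$ would fail; so you must prove that $\im\phi$ is already saturated in $\mathcal{F}$. This follows from the computation you already have: $\Deg\overline{\im\phi}=\Deg\im\phi+\operatorname{length}(\overline{\im\phi}/\im\phi)=\operatorname{length}(\overline{\im\phi}/\im\phi)\ge 0$, while semistability of $\mathcal{F}$ of slope $0$ gives $\Deg\overline{\im\phi}\le 0$, so the torsion vanishes and $\Cok\phi$ is locally free as it stands. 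Second, to deduce that $x\mapsto\rank\phi(x)$ is constant it is not enough that every closed point lies on a curve; you need that any two closed points of the proper, geometrically connected, reduced $X$ (after base change to $\overline{k}$) can be joined by a chain of irreducible curves, whose normalizations are curves in the sense of Definition~\ref{def:semistable}, so that constancy of the rank along each curve (which is what the curve case gives you) propagates to constancy on all of $X$; combined with lower semicontinuity this also pins down the rank at non-closed points. With these two repairs your proof is complete and coincides with the standard one.
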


For the proof, see ~\cite[Lemma 3.6]{no76}~\cite[Chapter I, Lemma 3.6]{no82}. We show that $\mathcal{S}(X)$ is closed under taking extensions:

\begin{prop}\label{prop:semistable2}
Let $X\in\Ob(\Var)$. Then $\mathcal{S}(X)$ is closed under taking extensions in $\Coh(X)$. More precisely, if there is an exact sequence of sheaves in $\Coh (X)$, 
\begin{equation*}
0\to\mathcal{E}'\to \mathcal{E}\to \mathcal{E}''\to 0
\end{equation*}
with $\mathcal{E}', \mathcal{E}''$ in $\mathcal{S}(X)$, then $\mathcal{E}$ also belongs to $\mathcal{S}(X)$. 
\end{prop}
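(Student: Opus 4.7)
The plan is to reduce the statement to the classical fact that on a smooth projective curve, semistability of slope $0$ is preserved under extensions, and then use the curve-by-curve definition of semistability.

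First I would verify that $\calE$ is automatically a vector bundle: since $\calE''$ is locally free, the short exact sequence $0\to\calE'\to\calE\to\calE''\to 0$ splits locally, so $\calE$ is locally isomorphic to $\calE'\oplus\calE''$, hence locally free. Next, for any curve $f:C\to X$ in the sense of Definition \ref{def:semistable}, I would pull back the sequence. Again because $\calE''$ is locally free, the sequence is locally split, so $f^{*}$ preserves exactness and we obtain
\begin{equation*}
0\to f^{*}\calE'\to f^{*}\calE\to f^{*}\calE''\to 0
\end{equation*}
on $C$, with $f^{*}\calE'$ and $f^{*}\calE''$ semistable of slope $0$ by hypothesis. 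It therefore suffices to prove: if $V$ is a vector bundle on a smooth curve $C$ sitting in a short exact sequence whose outer terms are semistable of slope $0$, then $V$ itself is semistable of slope $0$.

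For the slope, additivity of degree and rank in short exact sequences immediately gives $\Deg(V)=\Deg(V')+\Deg(V'')=0$, so $\mu(V)=0$. For semistability, let $\calF\subset V$ be a nonzero subbundle. Intersecting with $V'$ inside $V$ and taking the image in $V''$ yields an exact sequence
\begin{equation*}
0\to\calF\cap V'\to\calF\to\calF/(\calF\cap V')\to 0,
\end{equation*}
in which $\calF\cap V'$ is a subsheaf of $V'$ and $\calF/(\calF\cap V')$ is a subsheaf of $V''$. By the semistability of $V'$ and $V''$ (applied to subsheaves via saturation, which can only increase the degree), both quotients have slope $\le 0$, equivalently nonpositive degree. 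Summing yields $\Deg(\calF)\le 0$, so $\mu(\calF)\le 0=\mu(V)$, which is exactly semistability of $V$.

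The routine but delicate point is the pullback step and the subsheaf/subbundle distinction in the slope comparison; once one is careful that $f^{*}$ remains exact because the quotient is locally free, and that bounding slopes of arbitrary subsheaves on a smooth curve is equivalent to bounding slopes of subbundles (by passing to saturations), the argument reduces to purely numerical bookkeeping. No genuine new obstacle arises: the proposition is essentially the assertion that Nori's definition of semistability is compatible with the classical extension-closedness of slope-$0$ semistable bundles on smooth curves.
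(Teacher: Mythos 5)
Your proof is correct and follows essentially the same route as the paper: reduce to a curve $f:C\to X$, note that local freeness of $\mathcal{E}''$ makes $\mathcal{E}$ locally free and keeps the pulled-back sequence exact, and then invoke extension-closedness of slope-$0$ semistable bundles on a smooth curve. The only difference is that the paper simply cites \cite[Proposition 5.3.5]{le97} for that last classical fact, whereas you supply its (correct) proof via the intersection/saturation argument.
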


\begin{proof}
Let $f:C\to X$ be a curve in $X$. Since both $\mathcal{E}'$ and $\mathcal{E}''$ are locally free, we find that $\mathcal{E}$ is also locally free. Thus the sequence
\begin{equation*}
0\to f^{*}\mathcal{E}'\to f^{*}\mathcal{E}\to f^{*}\mathcal{E}''\to 0
\end{equation*} 
is exact in $\Coh (C)$. By assumption, both $f^{*}\mathcal{E}'$ and $f^{*}\mathcal{E}''$ are semistable of slope 0 on $C$. Hence so is $f^{*}\mathcal{E}$~(\cite[Proposition 5.3.5]{le97}). This implies that $\mathcal{E}\in\Ob(\mathcal{S}(X))$.
\end{proof}


\subsection{Essentially finite bundles}

In this subsection, we recall Nori's theory on a fundamental group scheme~(cf.~\cite{no76}~\cite{no82}). Let $X\in\Ob(\Var)$. A vector bundle $\mathcal{E}$ on $X$ is said to be \textit{finite} when there exist different polynomials $f\neq g\in \mathbb{N}[t]$ suth that $f(\mathcal{E})\simeq g(\mathcal{E})$. For example, let $G$ be a finite $k$-group scheme and $\pi:P\to X$ a $G$-torsor over $X$. Then, we have $G\times P\simeq P\times_{X}P$. This implies that $(\pi_{*}\mathcal{O}_{P})^{\oplus n}\simeq (\pi_{*}\mathcal{O}_{P})^{\otimes 2}$, where $n=\Dim\Gamma(\mathcal{O}_{G})$, and $\pi_{*}\mathcal{O}_P$ is a finite bundle on X. 

Now we give two  characterizations of finite bundles. For each vector bundle $\mathcal{E}$ on $X$, we denote by $I(\mathcal{E})$ the set of isomorphism classes of direct summand of $\mathcal{E}$ and put 
\begin{equation}\label{eq:def of S}
S(\mathcal{E})\overset{{\rm def}}{=}\bigcup_{n>0}I(\mathcal{E}^{\otimes n}).
\end{equation}
Then, a vector bundle $\mathcal{E}$ on $X$ is finite if and only if $\# S(\mathcal{E})<\infty$~\cite[Lemma 3.1]{no76}~\cite[Chapter I, Lemma 3.6]{no82}. Furthermore, let $\rm{Vec}(X)$ be the set of isomorphism classes of vector bundles on $X$. We define the $\mathbb{Q}$-algebra $K(X)$ by:
\begin{equation}\label{eq:def of K}
K(X)\overset{{\rm def}}{=}(\mathbb{Z}[{\rm Vec(X)}]/H)\otimes_{\mathbb{Z}}\mathbb{Q}
\end{equation}
Here the algebra structure on $K(X)$ is induced by $[\mathcal{E}]+[\mathcal{F}]\overset{{\rm def}}{=}[\mathcal{E}\oplus\mathcal{F}], [\mathcal{E}]\cdot[\mathcal{F}]\overset{{\rm def}}{=}[\mathcal{E}\otimes\mathcal{F}]$ and $H$ is the ideal generated by all elements of the form $[\mathcal{E}\oplus\mathcal{E}']-[\mathcal{E}]-[\mathcal{E}']$. The Krull-Schmidt theorem implies that the set of isomorphism classes of indecomposable vector bundles is a basis of $K(X)$.  For each vector bundle $\mathcal{E}$, we denote by $R(\mathcal{E})$ the subalgebra of $K(X)$ generated by $S(\mathcal{E})$. Then $\mathcal{E}$ is finite if and only if the Krull dimension of $R(\mathcal{E})$ is equal to  0~\cite[Lemma 3.1]{no76}~\cite[Chapter I, Lemma 3.6]{no82}.  

The first characterization implies that a line bundle is finite if and only if it is a torsion line bundle~\cite[Lemma 3.2]{no76}~\cite[Chapter I, Lemma 3.2]{no82}.
The second one implies that a direct sum and a tensor product of any two finite bundles are also finite, and that the dual and a direct summand of a finite bundle are also finite.
 Nori showed that any finite bundle is semistable~\cite[Corollary 3.5]{no76}~\cite[Chapter I, Corollary 3.5]{no82}. Therefore, the following is well-defined:

\begin{definition}
Let $A\subset\Ob(\mathcal{S}(X))$ be the family of finite indecomposable bundles on $X$. We define the category $\mathcal{C}^{N}(X)$ by $\mathcal{C}^{N}(X)\overset{{\rm def}}{=}\langle A\rangle\subset\mathcal{S}(X)$. An semistable bundle $\calE$ on $X$ is said to be \textit{essentially finite bundle} on $X$ if $\calE$ belongs to $\calC^N(X)$. 
\end{definition}

Nori proved that if $X(k)\neq \emptyset$, then $\mathcal{C}^{N}(X)$ is a neutral Tannakian category over $k$ in the sense of \cite{dm82}. In this case, each rational point $x\in X(k)$ defines a neutral fiber functor $\omega_{x}\overset{{\rm def}}{=}x^{*}:\mathcal{C}^{N}(X)\to \Vecf; \mathcal{E}\mapsto x^{*}\mathcal{E}$. Note that the condition that the connectivity of  $X$ makes $\omega_x$ a faithful functor. Nori defined the fundamental group scheme of $X$ with respect to a base point $x$ as the Tannakian fundamental group of $\mathcal{C}^{N}(X)$ with a fiber functor $\omega_{x}$. We denote it by $\pi_1^{N}(X,x)$. 
For each finite bundle $\mathcal{E}$, since $S(\mathcal{E})$ is a finite set, the corresponding Tannakian fundamental group $G$ of  $\langle\mathcal{E}\rangle_{\otimes}$ is a finite $k$-group scheme ~\cite[Proposition 2.20(1)]{dm82}. Thus $\pi_1^N(X,x)$ is a profinite group scheme over $k$. 

Let $X\in\Ob(\Sch)$ and $G$ an affine $k$-group scheme. A $G$-\textit{torsor} over $X$ is an $X$-scheme $\pi:P\to X$ together with a morphism $\phi:P\times G\to P$ satisfying the following properties:

(a) $\pi$ is a faithfully flat affine morphism,

(b) $\phi:P\times G\to P$ defines an action of $G$ on $P$ with $\pi\circ\phi=\pi\circ {\rm pr}_1$,

(c) $({\rm pr}_1, \phi):P\times G\to P\times_X P$ is an isomorphism.

To each $G$-torsor $\pi:P\to X$, we can associate an exact tensor functor $F(P):\Rep (G)\to \Qcoh (X)$ as follows. Each representation $V$ of $G$ gives rise to a vector bundle $(\mathcal{O}_P\otimes_k V)/G$ on $X$. This correspondence $V\mapsto (\mathcal{O}_P\otimes_k V)/G$ defines an exact　faithful tensor functor $\Rep (G)\to \Qcoh (X)$. We denote it by $F(P)$. Nori proved that every exact faithful tensor functor $\Rep (G)\to \Qcoh (X)$ is of the form $F(P)$ for some $G$-torsor $P\to X$. Furthermore, for each rational point $x\in X(k)$, the composition $x^{*}\circ F(P):\Rep(G)\to\Vecf$ is nothing but the forgetful functor and the corresponding torsor $P\times_X x\to x$ is trivial. This implies that the torsor $P$ admits a rational point $p$ above $x$. For details, see  \cite[Proposition 2.9]{no76}~\cite[Chapter 1, Proposition 2.9]{no82}. 

Now we apply this to our setting. Let $X\in\Ob(\Var)$ with $X(k)\neq \emptyset$. Fix a rational point $x\in X(k)$. By Tannaka duality, the fiber functor $\omega_x:\mathcal{C}^{N}(X)\to \Vecf$ induces an equivalence of categories
\begin{equation}\label{eq:equiv}
\mathcal{C}^{N}(X)\stackrel{\simeq}{\to}\Rep (\pi_1^{N}(X, x)).
\end{equation}
Let $F$ be the inverse functor of (\ref{eq:equiv}).  From the above discussion, there exists a pointed $\pi_1^{N}(X, x)$-torsor $\pi:(X^N_x, x^N)\to (X,x)$ with $F=F(X_x^N)$.  Let $S$ be a full tensor subcategory of $\mathcal{C}^{N}(X)$ generated by finitely many objects and denote by $\pi(X, S, x)$ its Tannakian fundamental group, which is a finite $k$-group scheme and 
$\pi_1^{N}(X, x)=\varprojlim_{S}\pi(X, S, x)$. 
By composing with $F$, the natural inclusion $\Rep (\pi(X, S, x))\hookrightarrow\Rep (\pi_1^{N}(X, x))$ yields a $\pi(X, S, x)$-torsor  $\pi_S:(X_S,x_S)\to (X,x)$ together with a rational point $x_S\in X_S(k)$ above $x$. Since $F(X_S)$ is fully faithful, we have
\begin{equation}\label{eq:geom-conn}
\begin{aligned}
\Gamma(X_S, \mathcal{O}_{X_S})&=\Hom_{\calO_{X_S}}(\mathcal{O}_{X_S}, \mathcal{O}_{X_S})=\Hom_{\calO_X}(\mathcal{O}_X, (\pi_S)_{*}\mathcal{O}_{X_S})\\
&\simeq\Hom_{\Rep (\pi(X, S, x))}(k, k[\pi(X, S, x)])=(k[\pi(X, S, x)])^{\pi(X, S, x)}=k,
\end{aligned}
\end{equation}
so $X_S$ is geometrically-connected~\cite[Chapter II, Proposition 3]{no82}. Furthermore, for any object $\mathcal{E}$ in $\mathcal{C}^{N}(X)$, we have
\begin{equation}\label{eq:section 2 equiv}
\mathcal{E}\in \Ob(S)\Leftrightarrow \pi_S^{*}\mathcal{E}=\text{trivial}.
\end{equation}
Let $S\subseteq T\subset\mathcal{C}^{N}(X)$ be two finitely generated full tensor subcategories. Then there is a unique morphism $\pi_{TS}:X_T\to X_S$ over $X$ such that $\pi_{TS}(x_T)=x_S$. Note that $\pi_{TS}:X_T\to X_S$ is a finite torsor with strucure group $G_{TS}\overset{{\rm def}}{=}\Ker(\pi(X, T, x))\twoheadrightarrow\pi(X, S, x))$~\cite[Lemma 2.6]{ehs08}. We have a pro-system $\Lambda=(X_S, x_S)_S$ of connected finite torsors over $X$ with $(X_x^N,x^N)=\varprojlim_S(X_S,x_S)$. Note that  If $\pi(X,S,x)$ is \'etale, then $X_S$ is geometrically-reduced.

\begin{rem}\label{rem:universality of pi_1^N}
The fundamental group $\pi_1^N(X,x)$ has the following universal property: for any finite group scheme $G$ over $k$, the natural map
\begin{equation*}
\Hom(\pi_1^N(X,x),G)\to{{\rm Tors}((X,x),G)};\,\phi\mapsto (P,p)=(X^N_x\times G)/\pi_1^N(X,x),
\end{equation*}
is bijective. Here ${{\rm Tors}}((X,x),G)$ stands for the set of isomorphism classes of all pointed $G$-torsors $(P,p)$ over $(X,x)$.
\end{rem}

\begin{rem}\label{rem:semi-simple}
If the characteristic of $k$ is 0, then every finite $k$-group scheme $G$ is reduced and $\Rep(G)$ is semi-simple. Thus, in this case, $\mathcal{C}^{N}(X)$ is semi-simple and essentially finite bundles are indeed finite~\cite[Section 3]{no76}~\cite[Chapter I, 3]{no82}.

Furthermore, if $k$ is an algebraically closed field of characteristic 0, then a finite $G$-torsor $(P,p)\to (X,x)$ together with a point above $x$ is nothing but a connected Galois \'etale covering of $(X,x)$ with Galois group $G(k)$ and the fundamental group scheme $\pi_1^N(X,x)$ is none other than the geometric \'etale fundamental group $\pi_1(X,\overline{x})$ of $X$.
\end{rem}


\subsection{Unipotent bundles}

In \cite[Chapter IV]{no82}, Nori also considered the category of \textit{unipotent bundles}.
Let $X\in\Ob(\Var)$.

\begin{definition} A vector bundle $\mathcal{E}$ on $X$ is said to be \textit{unipotent} if there exists a filtration 
\begin{equation}
\mathcal{E}=\mathcal{E}^{(0)}\supset\mathcal{E}^{(1)}
\supset\dots\supset\mathcal{E}^{(n)}=0
\end{equation}
such that $\mathcal{E}^{(i)}/\mathcal{E}^{(i+1)}\simeq \mathcal{O}_X$ for any $i$. 
We define the category $\mathcal{C}^{\uni}(X)$ as the full subcategory of $\Coh (X)$ consisting of unipotent bundles on $X$. 
\end{definition}

By Propositon \ref{prop:semistable2}, the category $\mathcal{C}^{\uni}(X)$ is contained in $\mathcal{S}(X)$. Nori proved that $\mathcal{C}^{\uni}(X)$ is a Tannakian category~\cite[Chapter IV, Lemma 2]{no82}:

\begin{prop}\label{prop:nilpotent}
(Nori) Let $X\in\Ob(\Var)$ with $X(k)\neq\emptyset$. Then the category $\mathcal{C}^{\uni}(X)$ is a neutral Tannakian category over $k$ and its Tannakian fundamental group is a \textit{unipotent} affine group scheme over $k$.
\end{prop}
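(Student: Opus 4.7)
The plan is to verify the Deligne--Milne axioms for $\mathcal{C}^{\uni}(X)$ to be a neutral Tannakian category with fiber functor $\omega_x = x^{*}$: a $k$-linear abelian rigid tensor category equipped with an exact faithful $k$-linear tensor functor to $\Vecf$. First I would check that $\mathcal{C}^{\uni}(X)$ is abelian. Proposition \ref{prop:semistable2} applied inductively along a unipotent filtration shows $\mathcal{C}^{\uni}(X) \subseteq \mathcal{S}(X)$, and I would prove that $\mathcal{C}^{\uni}(X)$ is stable under the kernels and cokernels of $\mathcal{S}(X)$. The driving observation is that $\calO_X$ is simple in $\mathcal{S}(X)$: a nonzero subbundle of a line bundle must equal the whole line bundle. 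Given a subobject $\calE' \subset \calE$ in $\mathcal{S}(X)$ with $\calE$ unipotent of filtration $0 = \calE^{n} \subset \cdots \subset \calE^{0} = \calE$, the intersected filtration $\calE'^{i} = \calE' \cap \calE^{i}$ has successive quotients $\calE'^{i}/\calE'^{i+1}$ injecting into $\calE^{i}/\calE^{i+1} \simeq \calO_X$, hence equal to $0$ or $\calO_X$; removing the trivial steps yields a unipotent filtration of $\calE'$. The same argument applied to $(\calE^{i} + \calE')/\calE' \subset \calE/\calE'$ handles the quotient.

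For the rigid tensor structure, if $\calE$ and $\calF$ are unipotent then the induced tensor filtration on $\calE \otimes \calF$ has graded pieces $\calO_X \otimes \calO_X \simeq \calO_X$, and the dual filtration on $\calE^{\vee}$ has graded pieces $\calO_X^{\vee} \simeq \calO_X$, so $\mathcal{C}^{\uni}(X)$ is closed under tensor products and duals, and internal Hom is given by $\calE^{\vee} \otimes \calF$. The fiber functor $\omega_x = x^{*}$ is exact, $k$-linear and monoidal by construction; for its faithfulness, via the adjunction $\Hom(\calE,\calF) \simeq \Gamma(X, \calE^{\vee} \otimes \calF)$ and the closure properties just established, it suffices to prove that for every unipotent $\mathcal{G}$ the evaluation map $\Gamma(X, \mathcal{G}) \to x^{*}\mathcal{G}$ is injective. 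I would do this by induction on the filtration length of $\mathcal{G}$: the base case uses that $X$ is geometrically connected, reduced and proper so $\Gamma(X, \calO_X) = k$ and evaluation is an isomorphism, and the inductive step is a diagram chase comparing the long exact cohomology sequence of $0 \to \calO_X \to \mathcal{G} \to \mathcal{G}' \to 0$ with the fiber sequence $0 \to k \to x^{*}\mathcal{G} \to x^{*}\mathcal{G}' \to 0$.

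For the unipotence of $\pi_1^{\uni}(X,x)$, Tannaka duality gives $\omega_x : \mathcal{C}^{\uni}(X) \xrightarrow{\sim} \Rep(\pi_1^{\uni}(X,x))$ sending $\calO_X$ to the trivial one-dimensional representation. The defining filtration property of a unipotent bundle then translates directly into the statement that every finite-dimensional representation of $\pi_1^{\uni}(X,x)$ admits a filtration with trivial graded pieces, which is the standard characterisation of a pro-unipotent affine group scheme. The main obstacle lies in the first step: ensuring that the intersections $\calE' \cap \calE^{i}$ and sums $\calE^{i} + \calE'$ are computed correctly inside the abelian category $\mathcal{S}(X)$ so that the resulting subquotients of $\calO_X$ are taken in the ambient category where $\calO_X$ is simple, rather than in $\Coh(X)$, where $\calO_X$ has many nontrivial ideal subsheaves that would obstruct the argument.
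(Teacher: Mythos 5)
Your proof is correct, but note that the paper does not actually prove this proposition: it is attributed to Nori and cited as \cite[Chapter IV.1, Lemma 2]{no82}, with the paper instead adapting Nori's argument to the larger category $\mathcal{C}^{EN}(X)$ in Proposition \ref{prop:cat of semi-finite}. Your route to abelian-ness differs from that adapted argument in an interesting way. You establish that $\mathcal{C}^{\uni}(X)$ is closed under subquotients inside the ambient abelian category $\mathcal{S}(X)$ (Proposition \ref{semistable1}), by intersecting a unipotent filtration with a subobject and invoking the simplicity of $\calO_X$ in $\mathcal{S}(X)$; this is clean precisely because the only simple constituent in sight is $\calO_X$, and your closing caveat is the right one to worry about --- the intersections and sums must be formed in $\mathcal{S}(X)$, where Nori's lemma guarantees that kernels and images of maps of semistable degree-$0$ bundles are again subbundles, so that the graded pieces really are subobjects or quotients of $\calO_X$ in a category where $\calO_X$ is simple. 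Nori's proof, and the paper's version of it for $\mathcal{C}^{EN}(X)$, instead runs an induction on $\rank\calE+\rank\calF$ for a morphism $f:\calE\to\calF$, peeling off a simple quotient and applying the snake lemma; that scheme is the one that generalizes when the simple objects are arbitrary finite indecomposable bundles rather than just $\calO_X$, whereas your subquotient-closure argument is shorter in the purely unipotent case and has the added benefit of exhibiting $\mathcal{C}^{\uni}(X)$ as a subcategory of $\mathcal{S}(X)$ closed under subobjects (a fact the paper needs later, e.g.\ in Lemma \ref{lem:essential extension}). The remaining steps --- rigidity via tensor and dual filtrations, faithfulness of $x^{*}$ via injectivity of evaluation on global sections of unipotent bundles proved by induction on the filtration length from $\Gamma(X,\calO_X)=k$, and unipotence of the Tannaka group from the existence of a trivial subobject in every nonzero unipotent bundle --- all match the standard argument and are sound.
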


Here, an affine group scheme $G$ is said to be unipotent if any non-trivial representation $\rho:G\to \GL (V)$ has an element $0\neq v\in V$ such that $gv=v$ for any $g\in G$. 
For the proof of Proposition 2.9, see \cite[Chapter IV.1]{no82}. Again, each rational point $x\in X(k)$ defines a neutral fiber functor $\omega_x:\mathcal{C}^{\uni}(X)\to\Vecf;\mathcal{E}\mapsto x^{*}\mathcal{E}$ of $\mathcal{C}^{\uni}(X)$. We denote by $\pi_1^{\uni}(X, x)$ the Tannakian fundamental group of $\mathcal{C}^{\uni}(X)$ with respect to the fiber functor $\omega_x$. 

\begin{prop}\label{prop:n-nil}
Let $X\in\Ob(\Var)$. If $k$ is of characteristic 0, then the following hold:

(1) Any object of $\mathcal{C}^{N}(X)\cap \mathcal{C}^{\uni}(X)$ is isomorphic to $\mathcal{O}_X^{\oplus d}$ for some $d\ge 0$. 

(2) For any finite bundles $\mathcal{F}$, $\mathcal{F}'\in \Ob(\mathcal{C}^{N}(X))$ and any unipotent bundles $\mathcal{E}$, $\mathcal{E}'\in \Ob(\mathcal{C}^{\uni}(X))$, the natural $k$-linear map
\begin{equation*}
\Hom_{\mathcal{O}_X}(\mathcal{F}, \mathcal{F}')\otimes \Hom_{\mathcal{O}_X}(\mathcal{E}, \mathcal{E}')\to \Hom_{\mathcal{O}_X}(\mathcal{F}\otimes \mathcal{E}, \mathcal{F}'\otimes \mathcal{E}')
\end{equation*}
is bijective.
\end{prop}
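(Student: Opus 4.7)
The plan is to treat both parts by the Tannakian formalism, exploiting the characteristic-zero principle that any algebraic $k$-group scheme which is simultaneously finite and unipotent is trivial: unipotence forces connectedness while finiteness forces zero-dimensionality, leaving only the trivial group.

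For (1), given $\calE\in\calC^{N}(X)\cap\calC^{\uni}(X)$, I will consider the Tannakian subcategory $\langle\calE\rangle_{\otimes}$ of $\calC^{EN}(X)$ generated by $\calE$. Because both $\calC^{N}(X)$ and $\calC^{\uni}(X)$ are Tannakian subcategories of $\calC^{EN}(X)$---each closed under subquotients, tensor products, and duals---one has $\langle\calE\rangle_{\otimes}\subset\calC^{N}(X)\cap\calC^{\uni}(X)$. Its Tannakian fundamental group $G$ with respect to $\omega_{x}$ is then simultaneously a finite-type quotient of $\pi_{1}^{N}(X,x)$ (hence finite, and \'etale since ${\rm char}\,k=0$) and a quotient of $\pi_{1}^{\uni}(X,x)$ (hence unipotent). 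The triviality principle above then forces $G=1$, so $\calE$ corresponds to the trivial representation and $\calE\simeq\calO_{X}^{\oplus d}$.

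For (2), denote by $V,V',W,W'\in\Rep(\pi_{1}^{EN}(X,x))$ the images of $\calF,\calF',\calE,\calE'$ under Tannaka duality, so that $V,V'$ (resp.\ $W,W'$) factor through $p^{N}$ (resp.\ $p^{\uni}$). The claim translates into the natural identity
\begin{equation*}
\Hom_{\pi_{1}^{N}}(V,V')\otimes_{k}\Hom_{\pi_{1}^{\uni}}(W,W')\stackrel{\sim}{\longrightarrow}\Hom_{\pi_{1}^{EN}}(V\otimes W,V'\otimes W').
\end{equation*}
The essential step is to verify that the combined map $(p^{N},p^{\uni}):\pi_{1}^{EN}(X,x)\to\pi_{1}^{N}(X,x)\times\pi_{1}^{\uni}(X,x)$ is surjective. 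After projecting to any pair of finite-type quotients $\pi_{1}^{N}\twoheadrightarrow G_{1}$ and $\pi_{1}^{\uni}\twoheadrightarrow G_{2}$, the image is a closed subgroup of $G_{1}\times G_{2}$ surjecting onto each factor, so Goursat's lemma realizes the obstruction to its being the full product as a common algebraic quotient of $G_{1}$ and $G_{2}$. That common quotient is both finite \'etale and unipotent, hence trivial by the same principle as in (1), which proves surjectivity on each finite-type quotient and then, by passing to the inverse limit, on the pro-algebraic groups. Granted this, pullback along $(p^{N},p^{\uni})$ is fully faithful on representations, so the right-hand Hom becomes $\Hom_{\pi_{1}^{N}\times\pi_{1}^{\uni}}(V\boxtimes W,V'\boxtimes W')$, and the standard formula for morphisms between external tensor products of representations of a direct product completes the proof.

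The main obstacle is the surjectivity of $(p^{N},p^{\uni})$: this is the actual content of (2), while the subsequent Hom computation is formal. Most of the work lies in executing the Goursat argument cleanly at the pro-algebraic level rather than the finite-type level; the reduction to finite-type quotients sketched above is the way I would make it rigorous, taking care that the image of $\pi_{1}^{EN}$ is a closed subgroup scheme whose projection to each $G_{1}\times G_{2}$ exhausts the whole product.
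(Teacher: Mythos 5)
Your proof is essentially correct, but it runs in the opposite direction from the paper's. The paper's argument is elementary and sheaf-theoretic: for (1) it uses that $\mathcal{C}^{N}(X)$ is semi-simple in characteristic $0$, so an indecomposable object of the intersection is simple in $\mathcal{C}^{N}(X)$, and the inclusion $\calO_X\hookrightarrow\calE$ coming from unipotence is then forced to be an isomorphism; for (2) it reduces, via $\Hom(\calE,\calE')=\Gamma(\calE^{\vee}\otimes\calE')$, to showing $\Gamma(\calF\otimes\calE)=0$ for $\calF$ a non-trivial finite indecomposable and $\calE$ unipotent, which it gets by observing that the unipotent filtration tensored with $\calF$ is a Jordan--H\"older filtration of $\calF\otimes\calE$ all of whose factors are $\calF\not\simeq\calO_X$. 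You instead work entirely on the group side: the same triviality principle (finite $+$ unipotent $\Rightarrow$ trivial in characteristic $0$) gives (1) via the monodromy group of $\langle\calE\rangle_{\otimes}$, and a Goursat argument gives faithful flatness of $(p^{N},p^{\uni})$, from which (2) follows by the K\"unneth formula for $\Hom$'s over a product group. Your route is clean and arguably more conceptual --- indeed it directly yields the full faithfulness of the functor $\Phi$ used in the paper's second proof of Theorem \ref{thm:main1}, whereas the paper deduces that full faithfulness \emph{from} Proposition \ref{prop:n-nil}(2) together with Proposition \ref{prop:appendix1}. Two caveats: first, your argument invokes $\pi_1^{EN}(X,x)$ and the surjectivity of $p^{N}$, $p^{\uni}$, which the paper only establishes in Section 2.4, after this proposition; there is no actual circularity (Proposition \ref{prop:cat of semi-finite} and Remark \ref{rem:rem of semi-finite2} do not use Proposition \ref{prop:n-nil}), but the material would have to be reordered. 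Second, your proof needs a rational point $x\in X(k)$ to have a neutral fiber functor, while the statement only assumes $X\in\Ob(\Var)$ and the paper's proof uses no base point; to cover the general case you would need to descend the $\Hom$-computation from a finite extension $k'/k$ with $X(k')\neq\emptyset$, using flat base change for $\Hom$'s, which is routine but should be said.
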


\begin{proof}
(1) Note that both $\mathcal{C}^{N}(X)$ and $\mathcal{C}^{\uni}(X)$ are closed under taking direct summands in $\mathcal{S}(X)$ and so is $\mathcal{C}^{N}(X)\cap\mathcal{C}^{\uni}(X)$. Thus it suffices to deal with only indecomposable bundles. Let $\mathcal{E}$ be an indecomposable bundle in $\mathcal{S}(X)$ which belongs to both $\mathcal{C}^{N}(X)$ and $\mathcal{C}^{\uni}(X)$. Since $\mathcal{C}^{N}(X)$ is semi-simple (Remark \ref{rem:semi-simple}), we find that $\mathcal{E}$ is a simple object in $\mathcal{C}^{N}(X)$. On the other hand, since $\mathcal{E}$ is unipotent and there is an inclusion $\mathcal{O}_X\hookrightarrow \mathcal{E}$. Since both $\mathcal{O}_X$ and $\mathcal{E}$ are simple in $\mathcal{C}^{N}(X)$, this inclusion must be an isomorphism of sheaves. Thus any indecomposable bundle which belongs to $\mathcal{C}^{N}(X)\cap\mathcal{C}^{\uni}(X)$ must be isomorphic to $\mathcal{O}_X$.   

(2) For any two vector bundles $\mathcal{E}$, $\mathcal{E}'$, we have $\Hom_{\calO_X}(\mathcal{E}, \mathcal{E}')\simeq \Hom_{\calO_X}(\mathcal{O}_X, \mathcal{E}^{\vee}\otimes \mathcal{E}')=\Gamma(\mathcal{E}^{\vee}\otimes \mathcal{E}')$. Thus it suffices to show that for any $\mathcal{F}\in\Ob(\mathcal{C}^{N}(X))$ and $\mathcal{E}\in\Ob(\mathcal{C}^{\uni}(X))$, the natural map
\begin{equation*}
\Gamma(\mathcal{F})\otimes \Gamma(\mathcal{E})\to \Gamma(\mathcal{F}\otimes\mathcal{E})
\end{equation*}
is bijective. We may assume that $\mathcal{F}$ and $\mathcal{E}$ are indecomposable in $\mathcal{C}^{N}(X)$ and $\mathcal{C}^{\uni}(X)$, respectively. If $\mathcal{F}=\mathcal{O}_X$, then the above map is reduced to the natural bijection $\Gamma(\mathcal{F})\otimes_k k\simeq\Gamma(\mathcal{F})$ and the statement is true. It remains to be proven that if $\mathcal{F}$ is not isomorphic to the trivial bundle $\mathcal{O}_X$, then $\mathcal{F}\otimes\mathcal{E}$ has no global sections. Take a filtration 
\begin{equation*}
\mathcal{E}=\mathcal{E}^{(0)}\supset \mathcal{E}^{(1)}\supset\dots\supset\mathcal{E}^{(n)}=0
\end{equation*}
such that $\mathcal{E}^{(i)}/\mathcal{E}^{(i+1)}\simeq \mathcal{O}_X$ for any $i$. Then the filtration obtained by tensoring with $\mathcal{F}$, 
\begin{equation*}
\mathcal{F}\otimes\mathcal{E}=\mathcal{F}\otimes\mathcal{E}^{(0)}\supset \mathcal{F}\otimes\mathcal{E}^{(1)}\supset\dots\supset\mathcal{F}\otimes\mathcal{E}^{(n)}=0
\end{equation*}
gives a Jordan-H\"older filtration of $\calF\otimes\calE$ in $\mathcal{S}(X)$ and all the subquotients  $(\mathcal{F}\otimes\mathcal{E}^{(i)})/(\mathcal{F}\otimes\mathcal{E}^{(i+1)})\simeq \mathcal{F}$ are not isomorphic to the trivial bundle $\mathcal{O}_X$ for any $i$. Thus $\mathcal{O}_X$ cannot be embedded into $\mathcal{F}\otimes\mathcal{E}$, whence $\Gamma(\mathcal{F}\otimes\mathcal{E})=0$. This finishes the proof.
\end{proof}

\begin{rem}
In the case of positive characteristic, the situation is quite different. In fact, $\mathcal{C}^{\uni}(X)$ is a full subcategory of $\mathcal{C}^{N}(X)$~\cite[Chapter IV.1, Proposition 3]{no82}.
\end{rem}


\subsection{Semifinite bundles}

In this subsection, we introduce the notion of \textit{semifinite bundles} in the case where a base field is of characteristic 0.
Let $k$ be a field of characteristic 0 and $X\in\Ob(\Var)$ with $X(k)\neq\emptyset$.

\begin{definition}
A vector bundle $\mathcal{E}$ on $X$ is said to be \textit{semifinite} if there exists a filtration
\begin{equation*}
\mathcal{E}=\mathcal{E}^{(0)}\supset \mathcal{E}^{(1)}\supset\dots\supset\mathcal{E}^{(n)}=0
\end{equation*}
such that $\mathcal{E}^{(i)}/\mathcal{E}^{(i+1)}$ is finite indecomposable for any $i$. We denote by $\mathcal{C}^{EN}(X)$ the full subcategory of $\Coh (X)$ consisting of semifinite bundles on $X$. 
\end{definition}

From Proposition \ref{prop:semistable2}, the category $\mathcal{C}^{EN}(X)$ is a subcategory of $\mathcal{S}(X)$. 
By definition, the category $\mathcal{C}^{EN}(X)$ is closed under taking extensions in $\Coh (X)$.

\begin{rem} Under the above notation,

(1) we have $\mathcal{C}^{N}(X), \mathcal{C}^{\uni}(X)\subseteq\mathcal{C}^{EN}(X)\subset\mathcal{S}(X)$, and 

(2) any simple object in $\mathcal{C}^{EN}(X)$ is nothing but a finite indecomposable bundle on $X$.
\end{rem}

Now we obtain the following:

\begin{prop}\label{prop:cat of semi-finite}
The category $\mathcal{C}^{EN}(X)$ is a $k$-linear abelian rigid tensor category. Furthermore, each rational point $x\in X(k)$ defines a neutral fiber functor $\omega_x:\mathcal{C}^{EN}(X)\to\Vecf;\mathcal{E}\mapsto x^{*}\mathcal{E}$ of $\mathcal{C}^{EN}(X)$. Therefore, under the condition that $X(k)\neq\emptyset$, it is a neutral Tannakian category over $k$.
\end{prop}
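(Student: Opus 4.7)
The plan is to verify the axioms of a neutral Tannakian category for $\mathcal{C}^{EN}(X)$ with fiber functor $\omega_x = x^*$, leveraging the results already established for $\mathcal{S}(X)$ and $\mathcal{C}^{N}(X)$, and using the semisimplicity of $\mathcal{C}^{N}(X)$ in characteristic $0$ crucially.

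First I would show that $\mathcal{C}^{EN}(X)$ is a $k$-linear abelian subcategory of $\mathcal{S}(X)$. Direct sums are obvious (concatenate filtrations), so the work is closure under kernels and cokernels. To this end I would prove the stronger statement that \emph{any subobject in $\mathcal{S}(X)$ of a semifinite bundle is semifinite}, and dually for quotients. Given $\mathcal{E}' \subset \mathcal{E}$ in $\mathcal{S}(X)$ and a filtration $\mathcal{E} = \mathcal{E}^{(0)} \supset \dots \supset \mathcal{E}^{(n)} = 0$ with subquotients $\mathcal{A}_i := \mathcal{E}^{(i)}/\mathcal{E}^{(i+1)}$ finite indecomposable, the induced filtration $\mathcal{E}'{}^{(i)} := \mathcal{E}' \cap \mathcal{E}^{(i)}$ (computed in $\mathcal{S}(X)$) has subquotients that are subobjects in $\mathcal{S}(X)$ of the respective $\mathcal{A}_i$. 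Since $\mathcal{C}^{N}(X) = \langle A\rangle$ is closed under subquotients in $\mathcal{S}(X)$ by its very construction, such a subobject lies in $\mathcal{C}^{N}(X)$; and since $\mathcal{C}^{N}(X)$ is semisimple in characteristic $0$, each finite indecomposable $\mathcal{A}_i$ is simple, forcing the subobject to be $0$ or $\mathcal{A}_i$. Discarding the zeros yields a filtration witnessing $\mathcal{E}' \in \mathcal{C}^{EN}(X)$. Applying this to $\Ker f$, $\im f$, and $\Cok f$ for any morphism $f$ in $\mathcal{C}^{EN}(X)$ gives the abelian structure.

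Next I would establish the rigid tensor structure. Closure under $\otimes$ follows by a two-step refinement: given filtrations with subquotients $\mathcal{A}_i$ and $\mathcal{B}_j$, tensoring produces a filtration of $\mathcal{E} \otimes \mathcal{F}$ with subquotients $\mathcal{A}_i \otimes \mathcal{B}_j$ (using flatness of vector bundles). Each such product is a finite bundle by the basic properties of finite bundles recalled in Section~2.2, hence by semisimplicity and Krull--Schmidt it splits as a finite direct sum of finite indecomposables, producing a further refinement. The dual $\mathcal{E}^{\vee}$ is handled analogously via the reversed filtration, with subquotients $\mathcal{A}_i^{\vee}$ which are again finite. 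The unit $\mathcal{O}_X$ is tautologically finite. For the fiber functor, $\omega_x = x^*$ is exact on vector bundles (by flatness), symmetric monoidal, and $k$-linear; faithfulness follows from the connectedness of $X$, since every vector bundle has constant rank and $x^*\mathcal{E} = 0$ forces $\mathcal{E} = 0$, which for a tensor functor between abelian rigid tensor categories is equivalent to faithfulness. Combined with the preceding paragraphs, the Deligne--Milne recognition theorem identifies $\mathcal{C}^{EN}(X)$ as a neutral Tannakian category over $k$.

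The main obstacle is the first step: showing that subobjects taken inside the ambient $\mathcal{S}(X)$, rather than merely locally free subsheaves of an underlying sheaf, remain semifinite. This is precisely where characteristic $0$ enters essentially, via the semisimplicity of $\mathcal{C}^{N}(X)$; without it, a subobject of a finite indecomposable need not be $0$ or itself, and the inductive refinement argument fails.
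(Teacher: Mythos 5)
Your proof is correct, but the way you establish the abelian structure is genuinely different from the paper's. The paper follows Nori's argument for unipotent bundles: given $f:\calE\to\calF$ in $\calC^{EN}(X)$, it inducts on $\rank\calE+\rank\calF$, peels off a simple quotient $\calF''$ of $\calF$ (dualizing if necessary), splits into the cases $\im(j\circ f)=0$ or $\im(j\circ f)=\calF''$, and applies the snake lemma. You instead prove the stronger statement that $\calC^{EN}(X)$ is closed under subobjects and quotient objects in the ambient abelian category $\mathcal{S}(X)$, by intersecting a subobject with the defining filtration and observing that each induced subquotient is a subobject in $\mathcal{S}(X)$ of a finite indecomposable bundle $\mathcal{A}_i$, hence lies in $\calC^{N}(X)=\langle A\rangle$ (closed under subquotients by construction) and, by semisimplicity of $\calC^{N}(X)$ in characteristic $0$, must be $0$ or $\mathcal{A}_i$. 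This is a valid Zassenhaus-type refinement argument, and it buys you more than the paper's proof does at this point: closure under subobjects in $\mathcal{S}(X)$ is exactly the property the paper invokes later without a separate proof (e.g.\ in the proof of Corollary 5.10 and in Lemma 3.5(2)), whereas the paper's induction only treats kernels and cokernels of morphisms between semifinite bundles. The trade-off is that the paper's route is characteristic-free in structure (it mimics the unipotent case), while yours leans essentially on semisimplicity of $\calC^{N}(X)$ — which you correctly flag, and which is harmless since the whole construction is restricted to characteristic $0$. The tensor, dual, and fiber-functor parts of your argument coincide with the paper's up to minor reorganization (you refine $\mathcal{A}_i\otimes\mathcal{B}_j$ via Krull--Schmidt where the paper inducts on $\rank\calF$ for $\calE$ finite).
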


\begin{proof}
We adopt the proof of \cite[Chapter IV.1]{no82}.
First we show the category $\mathcal{C}^{EN}(X)$ is a rigid tensor category. Let $\calE,\,\calF\in\Ob(\mathcal{C}^{EN}(X))$. We must show that $\calE\otimes\calF,\,\calE^{\vee}\in\Ob(\calC^{EN}(X))$. If $\calE$ is finite, then by induction on $\rank\calF$, we can find that $\calE\otimes\calF\in\Ob(\calC^{EN}(X))$. If $\calE$ is an arbitrary semifinite bundle, take a filtration
\begin{equation*}
\calE=\calE^{(0)}\supset\calE^{(1)}\supset\cdots\supset\calE^{(n)}=0
\end{equation*}
with $\calE^{(i)}/\calE^{(i+1)}$ finite bundles. By tensoring with $\calF$, we obtain the filtration on $\calE\otimes\calF\supseteq\calE^{(i)}\otimes\calF\supset
\calE^{(i+1)}\otimes\calF$ with $\calE^{(i)}\otimes\calF/\calE^{(i+1)}\otimes\calF\simeq
(\calE^{(i)}/\calE^{(i+1)})\otimes\calF$ semifinite. Since $\calC^{EN}(X)$ is closed under taking extensions, we can conclude that $\calE\otimes\calF\in\Ob(\calC^{EN}(X))$. On the other hand, we define the subbundle $(\calE^{\vee})^{(i)}\subset\calE^{\vee}$ by $(\calE^{\vee})^{(i)}\overset{{\rm def}}{=}(\calE/\calE^{(n-i)})^{\vee}$. Then these gives a filtration on $\calE^{\vee}$ with $(\calE^{\vee})^{(i)}/(\calE^{\vee})^{(i+1)}\simeq(\calE^{(n-i-1)}/\calE^{(n-i)})^{\vee}$ finite. Therefore, $\calE^{\vee}$ is semifinite.

Finally, we will show that it is an abelian category. 
Let $f:\mathcal{E}\to\mathcal{F}$ be a morphism with $\mathcal{E}$ and $\mathcal{F}$ in $\mathcal{C}^{EN}(X)$. We must show that both $\Ker(f)$ and $\Cok(f)$ are in $\mathcal{C}^{EN}(X)$. We will do that by induction on $\rank(\mathcal{E})+\rank(\mathcal{F})$. If $\rank(\mathcal{E})=0$, or $\rank(\mathcal{F})=0$, then it is clear that $\Ker(f), \Cok(f)\in \Ob(\mathcal{C}^{EN}(X))$. If both $\mathcal{E}$ and $\mathcal{F}$ are simple in $\mathcal{C}^{EN}(X)$, then in particular these are finite bundles and $\Ker(f), \Cok(f)\in \Ob(\mathcal{C}^{N}(X))\subset \Ob(\mathcal{C}^{EN}(X))$. (In fact, in this case, $f$ is zero, or an isomorphism.) 
Assume that $\mathcal{E}$, or $\mathcal{F}$ is not simple in $\mathcal{C}^{EN}(X)$. By considering the dual $f^{\vee}:\mathcal{F}^{\vee}\to \mathcal{E}^{\vee}$, if necessary, we may assume that $\mathcal{F}$ is not simple. Then there exists an exact sequence
\begin{equation*}
0\to \mathcal{F}' \stackrel{i}{\to}\mathcal{F}\stackrel{j}{\to}\mathcal{F}''\to 0,
\end{equation*}
where $\mathcal{F}',\mathcal{F}''\in\Ob(\mathcal{C}^{EN}(X))$ with $\mathcal{F}''$ simple. Note that $\rank(\mathcal{E})+\rank(\mathcal{F}')<\rank(\mathcal{E})+\rank(\mathcal{F})$. Since $\mathcal{F}''$ is simple, $\im (j\circ f)$ is 0, or surjective. In the first case, $f$ factors through $\mathcal{F}'$,i.e., there exists a morphism $g:\mathcal{E}\to \mathcal{F}'$ such that $i\circ g=f$. 
\begin{equation*}
\begin{CD}
0@>>>\mathcal{E}@=\mathcal{E}@>>> 0 @.\\
@. @VgVV @VfVV @VVV @.\\
0@>>>\mathcal{F}'@>i>>\mathcal{F}@>j>>\mathcal{F}''@>>> 0
\end{CD}
\end{equation*}
By the induction hypothesis, $\Ker(g)$ and $\Cok(g)$ are in $\mathcal{C}^{EN}(X)$. By the snake lemma, we have $\Ker(f)\simeq\Ker(g)$ and the exact sequence $0\to\Cok(g)\to\Cok(f)\to\mathcal{F}''\to0$, which implies that $\Ker(f)$ and $\Cok(f)$ are in $\mathcal{C}^{EN}(X)$.
In the second case,i.e., $\im(j\circ f)=\mathcal{F}''$, again the snake lemma implies that both $\Ker(f)$ and $\Cok(f)$ are in $\mathcal{C}^{EN}(X)$, which completes the proof.
\end{proof}

\begin{rem}\label{rem:rem of semi-finite1}
Let $S$ be a full tensor abelian subcategory of $\mathcal{C}^{N}(X)$. We define the category $\overline{S}$ as the full subcategory of $\mathcal{C}^{EN}(X)$ consisting of semifinite bundles obtained by taking successive extensions of finite bundles in $S$. The same proof as the above proposition implies that $\overline{S}$ is a $k$-linear abelian rigid tensor category. Furthermore, it is closed under taking subquotients in $\mathcal{C}^{EN}(X)$. 
\end{rem}

\begin{definition} Under the above notation, 
we denote by $\pi_1^{EN}(X, x)$ the Tannakian fundamental group of $\mathcal{C}^{EN}(X)$ with respect to the fiber functor $\omega_x$. 
\end{definition}

\begin{rem}\label{rem:rem of semi-finite2}
Note that both $\mathcal{C}^{N}(X)$ and $\mathcal{C}^{\uni}(X)$ are full subcategories of $\mathcal{C}^{EN}(X)$ which are closed under taking subquotients, and, in particular, are closed under taking subobjects. Thus, by \cite[Proposition2.12]{dm82} or Lemma \ref{lem:exactness} (i), we obtain the diagram (\ref{eq:int-diag}).
\end{rem}


\section{SEMIFINITE BUNDLES ON AN ELLIPTIC CURVE}

In this section, we assume that $k$ is an algebraically closed field of characteristic 0 and  that $X$ is an elliptic curve over $k$, i.e., a smooth curve $X$ in $\Cv$ of genus 1 with $x\in X(k)$ fixed.


\subsection{Vector bundles of degree 0 on an elliptic curve}

In this subsection, we recall Atiyah's theorem for vector bundles on an elliptic curve\cite{at57} and Lekaus' work for vector bundles of degree 0 on an elliptic curve~\cite{le02}.
In \cite{at57}, Atiyah studied and classified  vector bundles on an elliptic curve. The following is a part of his results:

\begin{thm}\label{thm:atiyah}
(Atiyah) Let $E(r,0)$ be the set of isomorphism classes of indecomposable vector bundles of rank $r$ and of degree 0 on $X$.

(1) There exists a vector bundle $\mathcal{E}_r\in E(r,0)$, unique up to isomorphisms, with $\Gamma(\mathcal{E}_r)\neq 0$. Moreover we have an exact sequence:
\begin{equation*}
0\to\mathcal{O}_X\to\mathcal{E}_r\to\mathcal{E}_{r-1}\to 0.
\end{equation*}

(2) Let $\mathcal{E}\in E(r,0)$, then $\mathcal{E}\simeq \mathcal{E}_r\otimes \mathcal{L}$ for some line bundle $\mathcal{L}$  of degree 0 with $\mathcal{L}\simeq \Det\mathcal{E}$. 

(3) Each $\mathcal{E}_r$ is self dual,i.e., $\mathcal{E}_r^{\vee}\simeq \mathcal{E}_r$.

(4) We have $\mathcal{E}_r\otimes \mathcal{E}_s\simeq \bigoplus_{i=1}^{\Min(r,s)}\mathcal{E}_{r_i}$. Furthermore, we obtain
\begin{equation*}
\mathcal{E}_r\otimes \mathcal{E}_s\simeq \mathcal{E}_{r-s+1}\oplus\mathcal{E}_{r-s+3}\oplus\dots\oplus\mathcal{E}_{(r-s)+(2s-1)}
\end{equation*}
for $2\le s\le r$.

(5) We have $\Dim\Gamma(\mathcal{E}_r\otimes\mathcal{E}_s)=\Min(r, s)$. In particular, we have $\Dim\Gamma(\mathcal{E}_r)=1$. 
\end{thm}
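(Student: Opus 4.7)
The plan is to prove all five parts simultaneously by induction on the rank $r$, with Serre duality on the elliptic curve (where $\omega_X\simeq\calO_X$) as the principal cohomological tool. The base case $r=1$ is immediate: $E(1,0)=\mathrm{Pic}^0(X)$, and a degree-zero line bundle with a non-zero global section has a nowhere-vanishing section, hence is trivial; thus $\calE_1=\calO_X$ is forced, and (3), (5) are trivial.

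For the inductive step I first build $\calE_r$ as a non-split extension of $\calE_{r-1}$ by $\calO_X$. By Serre duality and the inductive self-duality of $\calE_{r-1}$,
\begin{equation*}
\operatorname{Ext}^1(\calE_{r-1},\calO_X)\simeq H^1(X,\calE_{r-1}^{\vee})\simeq H^0(X,\calE_{r-1})^{\vee},
\end{equation*}
which is one-dimensional by (5); hence a unique non-split extension exists up to isomorphism, and I take $\calE_r$ to be it. Indecomposability I would verify by computing that $\End(\calE_r)$ is a local $k$-algebra via the long exact sequences from the defining sequence. For the uniqueness clause in (1), any $\calE\in E(r,0)$ with $\Gamma(\calE)\neq 0$ admits an embedding $\calO_X\hookrightarrow\calE$ as a subbundle; the rank-$(r-1)$ degree-$0$ quotient must be indecomposable lest $\calE$ itself split, hence equals $\calE_{r-1}$ by induction, forcing $\calE\simeq\calE_r$. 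Part (2) follows from the transitive action of $\mathrm{Pic}^0(X)$ on $E(r,0)$ by tensoring: for $\calE\in E(r,0)$, choose $\calL$ so that $\calE\otimes\calL^{-1}$ acquires a section, then apply (1). For (3), dualize the defining sequence to obtain $0\to\calE_{r-1}\to\calE_r^{\vee}\to\calO_X\to 0$ via the inductive hypothesis; a cohomology computation shows $\Gamma(\calE_r^{\vee})\neq 0$, and the uniqueness in (1) then forces $\calE_r^{\vee}\simeq\calE_r$.

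Parts (4) and (5) I would handle jointly by a nested induction on $s$ (for $r\geq s$). The base $s=1$ gives $\calE_r\otimes\calE_1\simeq\calE_r$ with one-dimensional section space, and the inductive step tensors $0\to\calO_X\to\calE_s\to\calE_{s-1}\to 0$ with $\calE_r$ to obtain
\begin{equation*}
0\to\calE_r\to\calE_r\otimes\calE_s\to\calE_r\otimes\calE_{s-1}\to 0;
\end{equation*}
the induction hypothesis together with Krull--Schmidt then identifies the summands. The main obstacle I anticipate is the precise decomposition in (4): pinning down the summands as $\calE_{r-s+1}\oplus\calE_{r-s+3}\oplus\cdots\oplus\calE_{r+s-1}$ requires careful tracking of both ranks and multiplicities, since the above exact sequence only bounds the decomposition a priori. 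My plan is to iteratively compute $\Dim\Hom(\calE_m,\calE_r\otimes\calE_s)$ for each expected $\calE_m$ using (5), and determine the multiplicities via these dimension counts combined with Krull--Schmidt.
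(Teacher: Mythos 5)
The paper itself gives no proof of this statement: it is quoted directly from Atiyah's 1957 paper with a citation, so there is no internal argument to compare against. Your sketch reconstructs what is essentially Atiyah's own inductive strategy --- build $\calE_r$ as the unique non-split extension of $\calE_{r-1}$ by $\calO_X$ via ${\rm Ext}^1(\calE_{r-1},\calO_X)\simeq H^1(\calE_{r-1}^{\vee})\simeq H^0(\calE_{r-1})^{\vee}\simeq k$, and propagate (2)--(5) --- and that architecture is sound. But two steps you treat as routine are in fact the substantive content of the theorem and are left as genuine gaps.

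First, in part (2) you ``choose $\calL$ so that $\calE\otimes\calL^{-1}$ acquires a section.'' The existence of such an $\calL$ is not formal: $\chi(\calE\otimes\calL^{-1})=0$ on an elliptic curve, so Riemann--Roch gives nothing, and one must rule out $H^0(\calE\otimes\calL)=0$ for \emph{every} $\calL\in{\rm Pic}^0(X)$. This is one of the key lemmas of Atiyah's paper (a modern proof uses the Fourier--Mukai transform); without it the classification in (2), and hence the transitivity of the ${\rm Pic}^0$-action, does not follow. Second, in the uniqueness part of (1), the inference ``the rank-$(r-1)$ quotient must be indecomposable lest $\calE$ itself split'' is not a valid implication: an extension of a decomposable bundle by $\calO_X$ can be indecomposable. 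Already on a genus-$2$ curve there are indecomposable unipotent bundles $E$ sitting in $0\to\calO\to E\to\calO^{\oplus 2}\to 0$, so the claim is not formal; on an elliptic curve it holds only because each indecomposable summand $\mathcal{Q}_i$ of the quotient has ${\rm Ext}^1(\mathcal{Q}_i,\calO_X)$ at most one-dimensional and because ${\rm Aut}(\mathcal{Q})$ acts transitively enough on the extension space to split off summands --- an analysis you would need to carry out. Note also that this step first requires knowing the summands are untwisted: the quotient is not known a priori to have a section, so ``equals $\calE_{r-1}$ by induction'' needs the detour through (2) at lower rank together with the vanishing ${\rm Ext}^1(\calE_{r-1}\otimes\calL,\calO_X)=H^0(\calE_{r-1}\otimes\calL)^{\vee}=0$ for $\calL$ non-trivial. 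A smaller point of the same kind: promoting a non-zero section to a subbundle inclusion $\calO_X\hookrightarrow\calE$ uses the semistability of indecomposable degree-$0$ bundles, which should be stated. Your plan for (4)--(5) (count summands by $\Dim\Gamma$, pin down ranks via the tensored extension and Krull--Schmidt) is workable, and you are right that the multiplicity bookkeeping is the delicate point there.
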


In \cite{le02}, Lekaus studied Tannakian categories generated by indecomposable vector bundles of degree 0 on $X$. For each indecomposable vector bundle $\mathcal{E}$ of degree 0, we define a full subcategory $\mathcal{C}(\mathcal{E})$ of $\mathcal{S}(X)$ by $\mathcal{C}(\mathcal{E})\overset{{\rm def}}{=}\langle S(\mathcal{E})\rangle$(cf. (\ref{eq:def of S})). 
Lekaus proved that for any $\mathcal{E}\in E(r,0)$, the category $\mathcal{C}(\mathcal{E})$ is  a neutral Tannakian category over $k$ and  calculated these Tannakian fundamental groups by using Theorem \ref{thm:atiyah} and obtained the following results:

\begin{thm}\label{thm:lekaus}
(Lekaus)
Under the above notation, the following hold:

(1) The Tannakian fundamental group of $\mathcal{C}(\mathcal{E}_2)$ is isomorphic to $\mathbb{G}_a$. 

(2) If $\mathcal{L}$ is a non-torsion line bundle, then the Tannakian fundamental group of $\mathcal{C}(\mathcal{E}_2\otimes \mathcal{L})$ is isomorphic to $\mathbb{G}_a\times \mathbb{G}_m$.

(3) If $\mathcal{L}$ is a torsion line bundle of order $m>0$, then the Tannakian fundamental group  of $\mathcal{C}(\mathcal{E}_2\otimes\mathcal{L})$ is isomorphic to $\mathbb{G}_a\times \mu_m$. 
\end{thm}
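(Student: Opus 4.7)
The plan is to prove each of the three isomorphisms by combining Atiyah's classification (Theorem~\ref{thm:atiyah}) with Tannaka duality: I would identify the indecomposables of $\calC(\calE)$, determine their $\Hom$-structure, and thereby exhibit a tensor equivalence with $\Rep(G)$ for the claimed $G$.

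For (1), Atiyah's formula $\calE_2\otimes\calE_s\simeq\calE_{s-1}\oplus\calE_{s+1}$ (for $s\geq 2$) shows inductively that every direct summand of $\calE_2^{\otimes n}$ is some $\calE_r$ with $r\leq n+1$, and each $\calE_r$ is an iterated extension of $\calO_X$. Hence $\calC(\calE_2)\subseteq\calC^{\uni}(X)$, so $G:=\pi_1(\calC(\calE_2),\omega_x)$ is unipotent. Since $\calC(\calE_2)$ is tensor-generated by the self-dual bundle $\calE_2$, the representation of $G$ on $\omega_x(\calE_2)\simeq k^2$ is faithful, and $G$ embeds as a closed unipotent subgroup of $\GL_2$, landing inside a copy of $\mathbb{G}_a$. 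Non-triviality comes from the non-split extension $0\to\calO_X\to\calE_2\to\calO_X\to 0$, and in characteristic $0$ the only non-trivial closed subgroup of $\mathbb{G}_a$ is $\mathbb{G}_a$ itself; hence $G\simeq\mathbb{G}_a$.

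For (2) and (3), set $\calE=\calE_2\otimes\calL$. Tensoring the defining sequence of $\calE_2$ by $\calL$ realises $\calL$ as a subobject of $\calE$, so $\calL\in\calC(\calE)$; then $\calE_2=\calE\otimes\calL^{-1}\in\calC(\calE)$, and every $\calL^n$ and every $\calE_r\otimes\calL^n$ also lies in $\calC(\calE)$. A direct check using Atiyah's decomposition shows, conversely, that every indecomposable of $\calC(\calE)$ is of the form $\calE_r\otimes\calL^n$. By part~(1) the subcategory $\calC(\calE_2)\subset\calC(\calE)$ has fundamental group $\mathbb{G}_a$, while the line-bundle subcategory $\langle\calL\rangle_\otimes$ has fundamental group $\mathbb{G}_m$ in case~(2) and $\mu_m$ in case~(3).

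The final step is the $\Hom$-factorisation
\[
\Hom_{\calO_X}(\calE_r\otimes\calL^a,\calE_s\otimes\calL^b)\simeq\Hom_{\calO_X}(\calE_r,\calE_s)\otimes_k\Hom_{\calO_X}(\calL^a,\calL^b),
\]
which reduces to the orthogonality statement $\Gamma(\calE_t\otimes\calL^c)=0$ whenever $\calL^c\not\simeq\calO_X$. This follows from Atiyah's uniqueness in~(1): any indecomposable of rank~$t$ and degree~$0$ with non-zero global sections must be isomorphic to $\calE_t$, so if $\Gamma(\calE_t\otimes\calL^c)\neq 0$ then $\calE_t\otimes\calL^c\simeq\calE_t$, which via Atiyah's parametrisation of $E(t,0)$ forces $\calL^c\simeq\calO_X$. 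Together with the classification of indecomposables above, this yields a tensor equivalence $\calC(\calE)\simeq\Rep(\mathbb{G}_a\times H)$ with $H\in\{\mathbb{G}_m,\mu_m\}$, and Tannaka duality produces the stated direct product. The main obstacle is precisely this orthogonality, which is the mechanism that decouples the unipotent and diagonalisable factors of the fundamental group.
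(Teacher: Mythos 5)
This theorem is imported from Lekaus's paper \cite{le02} and the text gives no proof of it, so there is nothing internal to compare against line by line; judged on its own terms your reconstruction is essentially correct, and for parts (2)--(3) it anticipates almost exactly the paper's own ``proof 2'' of Theorem \ref{thm:main1} (full faithfulness of $\Rep(\mathbb{G}_a\times H)\to\calC(\calE)$ from a $\Hom$-factorisation, essential surjectivity from the classification of indecomposables). Your part (1) is clean: unipotence from $\calC(\calE_2)\subset\calC^{\uni}(X)$, a closed embedding $G\hookrightarrow\GL_2$ from tensor-generation by the self-dual $\calE_2$, Kolchin to land in $\mathbb{G}_a$, and non-splitness of $\calE_2$ to rule out the trivial subgroup. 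Two steps deserve more care than you give them. First, ``a direct check shows every indecomposable of $\calC(\calE)$ is of the form $\calE_r\otimes\calL^n$'' is not a formality: an indecomposable subquotient of a sum of such bundles need not obviously be of that shape, and the argument (as in Lemma \ref{lem:semi-finite on elliptic}(1)) itself runs through the same orthogonality $\Gamma(\calE_t\otimes\calL^c)=0$ you invoke later, so that orthogonality is doing double duty and should be proved first. (For the implication $\calE_t\otimes\calL^c\simeq\calE_t\Rightarrow\calL^c\simeq\calO_X$, note that taking determinants only gives $\calL^{tc}\simeq\calO_X$; the clean route is injectivity of Atiyah's bijection $E(1,0)\to E(t,0)$, or comparison of Jordan--H\"older constituents in $\mathcal{S}(X)$, which forces $\calL^c\simeq\calO_X$ directly.) Second, passing from the $\Hom$-factorisation on the generators $\calE_r\otimes\calL^n$ to full faithfulness of the functor $\Rep(\mathbb{G}_a)\otimes\Rep(H)\to\calC(\calE)$ requires knowing that every object of $\Rep(\mathbb{G}_a\times H)$ embeds into a finite sum of external tensor products $V_i\otimes U_i$; this is exactly the regular-representation argument supplied in proof 2 of Theorem \ref{thm:main1} together with Proposition \ref{prop:appendix1}, and should be cited or reproduced. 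Finally, in the non-torsion case you freely use $\calL^{-1}\in\calC(\calE)$; with the literal definition $\calC(\calE)=\langle S(\calE)\rangle$, which only involves positive tensor powers, this is not immediate, so you should take $\calC(\calE)$ to be the rigid (Tannakian) subcategory generated by $\calE$, i.e.\ $\langle\calE\rangle_{\otimes}$, as the statement of the theorem in any case requires.
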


\begin{rem}\label{rem:rem of lekaus's work1}
(1) By using Atiyah's theorem (Theorem \ref{thm:atiyah}(1), (3) and (4)), Lekaus showed that 
\begin{equation*}
S(\mathcal{E}_r)=\begin{cases}
                                  \{\mathcal{E}_k;k>0\}&\text{if $r$: even,}\\
                                  \{\mathcal{E}_{2k-1};k>0\}&\text{if $r$: odd.}
                                 \end{cases}
\end{equation*}
However, for any $k>0$, $\mathcal{E}_{2k-1}$ is a subbundle of $\mathcal{E}_{2k}$ (Theorem 3.1(1), (3)). Thus we have $\mathcal{C}(\mathcal{E}_r)=\mathcal{C}(\mathcal{E}_2)$ for any $r>1$.

(2) Let $\mathcal{L}$ be a torsion line bundle of order $m>0$. Lekaus also proved that
\begin{equation*}
S(\mathcal{E}_2\otimes\mathcal{L})=\begin{cases}
                                  \{\mathcal{E}_{2k-1}\otimes\mathcal{L}^{\otimes 2i}, \mathcal{E}_{2k}\otimes\mathcal{L}^{\otimes 2i+1};k>0, i=0,\dots,m/2\}&\text{if $m$: even,}\\
                                  \{\mathcal{E}_{2k-1}\otimes\mathcal{L}^{\otimes i};k>0, i=0,\dots, m-1\}&\text{if $m$: odd.}
                                 \end{cases}
\end{equation*}
\end{rem}

\begin{rem}\label{rem:rem of lekaus's work2}
If $\mathcal{E}$ is a finite bundle, then Nori's theory shows that there is a finite $G_{\mathcal{E}}$-torsor $\pi:P_{\calE}\to X$ such that $\pi^{*}(\mathcal{E})$ is trivial over $P$. In this case, the equality $\Dim R(\mathcal{E})=\Dim G_{\mathcal{E}}(=0)$ holds~(cf.~Section 2.2). In \cite{le02}, Lekaus showed that such relation between $\Dim R(\mathcal{E})$ and $\Dim G_{\mathcal{E}}$ can be generalized for  a vector bundle of degree 0 on an elliptic curve. Namely, Lekaus proved that for each $\mathcal{E}\in E(r, 0)$, there exists a  $G_{\mathcal{E}}$-torsor $\pi:P_{\calE}\to X$ such that $\pi^{*}\mathcal{E}=\text{trivial}$ and $\Dim R(\mathcal{E})=\Dim G_{\mathcal{E}}(=1)$ and $\mathcal{E}$ can not be trivialized by any  $G$-torsor with  $\Dim G<\Dim R(\mathcal{E})$. Thus, Theorem \ref{thm:lekaus} implies that if $\rank\mathcal{E}\ge 2$, $\Dim R(\mathcal{E})$ can not be 0. 
\end{rem}

\begin{rem}\label{rem:finite bundle on elliptic curve}
Remark \ref{rem:rem of lekaus's work2} shows that finite indecomposable bundles on $X$ must be line bundles~(cf.~Section 2.2). 
However, a line bundle is finite if and only if it is a torsion line bundle. Therefore, a finite indecomposable bundle on an elliptic curve $X$ is nothing but a torsion line bundle on $X$. 
\end{rem}


\subsection{Structure theorem of $\pi_1^{EN}$ for an elliptic curve}

In this subsection, we give a proof of Theorem \ref{thm:int-main1}, which is a generalization of Theorem \ref{thm:lekaus} (3).

\begin{lem}\label{lem:nil on elliptic}
(1) All indecomposable bundles of $\mathcal{C}(\mathcal{E}_2)$ are of the form $\mathcal{E}_r$.

(2) We have $\mathcal{C}(\mathcal{E}_2)=\mathcal{C}^{\uni}(X)$. 
\end{lem}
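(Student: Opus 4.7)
The plan is to identify the indecomposable unipotent bundles on $X$ precisely as the Atiyah bundles $\calE_r$, and to derive both (1) and (2) from this characterization. I would proceed via three steps.

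First I would show that every $\calE_r$ is itself unipotent, by induction on $r$ using the short exact sequence
\begin{equation*}
0 \to \calO_X \to \calE_r \to \calE_{r-1} \to 0
\end{equation*}
from Theorem \ref{thm:atiyah}(1). The base case $\calE_1 = \calO_X$ is immediate, and in the inductive step one lifts a filtration of $\calE_{r-1}$ by $\calO_X$ to a filtration of $\calE_r$ extending the inclusion $\calO_X\hookrightarrow\calE_r$.

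Next I would prove the converse: any indecomposable unipotent bundle $\calF$ on $X$ is isomorphic to some $\calE_r$. Since $\calF$ admits a filtration with successive quotients $\calO_X$, additivity of degree gives $\Deg\calF = 0$, so $\calF\in E(r,0)$ where $r = \rank\calF$. Atiyah's classification (Theorem \ref{thm:atiyah}(2)) then yields $\calF \simeq \calE_r\otimes\calL$ for some degree-zero line bundle $\calL$. The innermost nonzero step of the unipotent filtration provides an injection $\calO_X\hookrightarrow\calF$, whence $\Gamma(\calF)\neq 0$. By the uniqueness clause in Theorem \ref{thm:atiyah}(1), $\calE_r$ is the only indecomposable object of $E(r,0)$ with nonzero global sections, and therefore $\calF\simeq\calE_r$.

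Combining these observations I would finish as follows. For (1), any indecomposable object of $\calC(\calE_2)$ is a subquotient of some $\bigoplus_i\calE_{k_i}$; since $\calC^{\uni}(X)$ is closed under direct sums and subquotients in $\calC^{EN}(X)$, the subquotient is unipotent, and being indecomposable it is some $\calE_r$ by the previous step. For (2), Remark \ref{rem:rem of lekaus's work1}(1) identifies $S(\calE_2)$ with $\{\calE_k\}_{k>0}$, so the first step gives $\calC(\calE_2)\subseteq\calC^{\uni}(X)$; conversely, applying the Krull-Schmidt theorem in $\mathcal{V}(X)$ to any unipotent bundle decomposes it into indecomposable unipotent summands, each of which is some $\calE_r\in S(\calE_2)$, so the bundle lies in $\calC(\calE_2)$. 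The main obstacle is ensuring that the twisting line bundle $\calL$ in the Atiyah decomposition of an indecomposable unipotent must be trivial; the leverage here comes from Theorem \ref{thm:atiyah}(1), since without the uniqueness of $\calE_r$ as the indecomposable in $E(r,0)$ carrying a section, an indecomposable unipotent could in principle be some twist $\calE_r\otimes\calL$ with $\calL\not\simeq\calO_X$.
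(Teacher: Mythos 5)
Your argument is correct, and for part (2) --- that is, for the inclusion $\mathcal{C}^{\uni}(X)\subseteq\mathcal{C}(\calE_2)$ --- it coincides with the paper's: an indecomposable unipotent bundle has degree $0$ and carries a nonzero section coming from the bottom step of its filtration, so the uniqueness clause of Theorem \ref{thm:atiyah}(1) identifies it with $\calE_r$. Where you genuinely diverge is part (1). The paper takes an indecomposable $\calE\in\mathcal{C}(\calE_2)$, produces an indecomposable $\calF$ with $\Hom_{\calO_X}(\calF,\calE)\neq 0$ and $\Hom_{\calO_X}(\calF,\oplus_i\calE_{r_i})\neq 0$, writes $\calF\simeq\calE_r\otimes\calL$ via Theorem \ref{thm:atiyah}(2), and kills the twist $\calL$ by the Hom--tensor decomposition of Proposition \ref{prop:n-nil}(2); you instead note that a subquotient of the unipotent bundle $\oplus_i\calE_{k_i}$ is again unipotent and then invoke your classification of indecomposable unipotent bundles. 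Your route is shorter and bypasses Proposition \ref{prop:n-nil}(2) entirely, at the cost of using that $\mathcal{C}^{\uni}(X)$ is closed under subquotients (not just direct summands) in $\mathcal{S}(X)$ --- a fact the paper itself asserts and uses later, e.g.\ in the proof of Lemma \ref{lem:essential extension}, and which follows from the Jordan--H\"older argument since all simple constituents of a unipotent bundle in $\mathcal{S}(X)$ are $\calO_X$. Both proofs ultimately rest on the same input, Atiyah's uniqueness of the indecomposable of rank $r$ and degree $0$ admitting a section; one minor imprecision is that you speak of closure under subquotients ``in $\calC^{EN}(X)$'' where the ambient category in the definition of $\mathcal{C}(\calE_2)=\langle S(\calE_2)\rangle$ is $\mathcal{S}(X)$, but this does not affect the argument.
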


\begin{proof}
(1) Note that Theorem \ref{thm:atiyah}(1) implies that all $\mathcal{E}_r (r>0)$ are  unipotent bundles.
Take an indecomposable bundle $\mathcal{E}$ which is in $\mathcal{C}(\mathcal{E}_2)$. Then it is a subquotient of a finite direct sum of $\mathcal{E}_{r_i}, i=1,\dots,t$ (Remark \ref{rem:rem of lekaus's work1}). Thus there exists an indecomposable bundle $\mathcal{F}$ of degree 0 such that $\Hom_{\calO_X}(\mathcal{F},\mathcal{E})\neq 0$ and $\Hom_{\calO_X}(\mathcal{F}, \oplus_{i=1}^{t}\mathcal{E}_{r_i})\neq 0$.  If $\rank\mathcal{F}=r$, then we have $\mathcal{F}=\mathcal{E}_r\otimes\mathcal{L}$ for some line bundle $\mathcal{L}$ (Theorem \ref{thm:atiyah}(2)). The condition $\Hom_{\calO_X}(\mathcal{F}, \oplus_{i=1}^{t}\mathcal{E}_{r_i})\neq 0$ implies that 
\begin{equation*}
0\neq\Hom_{\calO_X}(\mathcal{F}, \mathcal{E}_{r_i})\simeq\Hom_{\calO_X}(\mathcal{E}_r, \mathcal{E}_{r_i})\otimes\Hom_{\calO_X}(\mathcal{L}, \mathcal{O}_X)
\end{equation*}
for some $i$ and $\mathcal{F}$ must be of the form $\mathcal{E}_r$ (cf. Proposition \ref{prop:n-nil}(2)). Thus we have $\Hom_{\calO_X}(\mathcal{E}_r,\mathcal{E})\neq 0$, which again implies that $\mathcal{E}$ must be of the form $\mathcal{E}_s$. 

(2) Let $\mathcal{E}\in\Ob(\mathcal{C}^{\uni}(X))$. We must show that $\mathcal{E}\in\Ob(\mathcal{C}(\mathcal{E}_2))$. Since $\mathcal{C}^{\uni}(X)$ is closed under taking direct summands, we may assume that $\mathcal{E}$ is indecomposable. Since $\mathcal{E}$ is unipotent, there exists an inclusion $\mathcal{O}_X\hookrightarrow\mathcal{E}$, which implies that $\Gamma(\mathcal{E})\neq 0$.
Therefore, by Theorem \ref{thm:atiyah}(1), $\mathcal{E}$ must be of the form $\mathcal{E}_r$.  Thus it is  in $\mathcal{C}(\mathcal{E}_2)$ (Remark \ref{rem:rem of lekaus's work1}(1)). 
\end{proof}

\begin{lem}\label{lem:semi-finite on elliptic}
(1) Let $\mathcal{L}$ be a torsion line bundle of order $m>0$. Then any indecomposable bundle in $\mathcal{C}(\mathcal{E}_2\otimes\mathcal{L})$ is of the form $\mathcal{E}_r\otimes\mathcal{L}^{\otimes i}$ for some $r>0$ and $i=0,\dots,m-1$. 

(2) For any torsion line bundle $\mathcal{L}$, the category $\mathcal{C}(\mathcal{E}_2\otimes\mathcal{L})$ is a full subcategory of $\mathcal{C}^{EN}(X)$ which is closed under taking subobjects.

(3) Any indecomposable bundles in $\mathcal{C}^{EN}(X)$ is of the form $\mathcal{E}_r\otimes\mathcal{L}$ for some $r>0$ and for some  torsion line bundle $\mathcal{L}$. 
\end{lem}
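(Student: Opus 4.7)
The plan is to establish (1) by an Atiyah--Hom analysis adapting the proof of Lemma~\ref{lem:nil on elliptic}(1), to deduce (2) as a direct consequence of (1), and to settle (3) by a determinant computation.

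For (1), let $\calE$ be an indecomposable object of $\calC(\calE_2\otimes\calL)$. Since $\calC(\calE_2\otimes\calL)\subseteq\mathcal{S}(X)$, $\calE$ is semistable of slope $0$, so Theorem~\ref{thm:atiyah}(2) gives $\calE\simeq\calE_r\otimes\mathcal{M}$ for some degree-zero line bundle $\mathcal{M}$. By Remark~\ref{rem:rem of lekaus's work1}(2), $\calE$ is a subquotient of a finite direct sum $\bigoplus_j\calE_{r_j}\otimes\calL^{i_j}$, and exactly as in the proof of Lemma~\ref{lem:nil on elliptic}(1), one can find an indecomposable $\calF=\calE_s\otimes\mathcal{M}'$ with $\Hom(\calF,\calE)\neq 0$ and $\Hom(\calF,\bigoplus_j\calE_{r_j}\otimes\calL^{i_j})\neq 0$. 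Using the self-duality $\calE_s^{\vee}\simeq\calE_s$, the tensor-product formula of Theorem~\ref{thm:atiyah}(4), and the characterization $\Gamma(\calE_t\otimes\mathcal{N})\neq 0\Leftrightarrow\mathcal{N}\simeq\calO_X$ coming from Theorem~\ref{thm:atiyah}(1), these nonvanishings together force $\mathcal{M}=\mathcal{M}'=\calL^{i_j}$ for some $j$, so $\calE\simeq\calE_r\otimes\calL^{i_j}$.

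For (2), each generator $\calE_r\otimes\calL^i\in S(\calE_2\otimes\calL)$ carries the filtration obtained from $0\subset\calE_1\subset\calE_2\subset\cdots\subset\calE_r$ by tensoring with $\calL^i$, whose successive quotients are all $\calL^i$, a torsion line bundle (hence finite indecomposable); so it is semifinite, and combined with the abelianness of $\calC^{EN}(X)$ (Proposition~\ref{prop:cat of semi-finite}) this yields $\calC(\calE_2\otimes\calL)\subseteq\calC^{EN}(X)$. For closure under subobjects, given $\calF\hookrightarrow\calE$ with $\calE\in\calC(\calE_2\otimes\calL)$ and $\calF\in\calC^{EN}(X)$, I will use (1) to write $\calE\simeq\bigoplus_k\calE_{r_k}\otimes\calL^{i_k}$, decompose $\calF$ into indecomposables $\calE_s\otimes\mathcal{M}'$, and apply the Hom-vanishing argument of (1) to each injection $\calE_s\otimes\mathcal{M}'\hookrightarrow\bigoplus_k\calE_{r_k}\otimes\calL^{i_k}$, forcing $\mathcal{M}'=\calL^{i_k}$ for some $k$ and placing every summand of $\calF$ in $\calC(\calE_2\otimes\calL)$.

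For (3), let $\calE$ be an indecomposable object of $\calC^{EN}(X)\subseteq\mathcal{S}(X)$. As above, Theorem~\ref{thm:atiyah}(2) gives $\calE\simeq\calE_r\otimes\mathcal{M}$ for some $\mathcal{M}$ of degree $0$. By the definition of semifinite, $\calE$ has a filtration whose $r$ successive quotients are finite indecomposable bundles, and by Remark~\ref{rem:finite bundle on elliptic curve} these quotients are torsion line bundles $\calL_1,\dots,\calL_r$. Iterating the short exact sequence $0\to\calO_X\to\calE_r\to\calE_{r-1}\to 0$ of Theorem~\ref{thm:atiyah}(1) gives $\Det\calE_r\simeq\calO_X$, so $\Det\calE\simeq\mathcal{M}^{\otimes r}\simeq\calL_1\otimes\cdots\otimes\calL_r$ is torsion; since $\mathrm{Pic}^0(X)$ is divisible, $\mathcal{M}$ itself is torsion. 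The main obstacle I foresee is the subobject-closure half of (2): it requires combining the Atiyah--Lekaus decomposition of the ambient direct sum with the fine Hom-vanishing that pins down the precise isomorphism class (not merely the rank) of each indecomposable summand of $\calF$; by contrast, once (1) is in hand, part (3) drops out quickly from the determinant reduction.
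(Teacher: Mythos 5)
Your argument is correct, and parts of it genuinely diverge from the paper's. Part (1) is essentially the paper's proof verbatim (Atiyah's classification plus the Hom-analysis of Lemma \ref{lem:nil on elliptic}(1)). For the subobject-closure in (2), the paper argues more softly: since $\mathcal{C}^{EN}(X)$ is abelian with unique semi-simplification, any subobject of $\mathcal{F}\in\mathcal{C}(\mathcal{E}_2\otimes\mathcal{L})$ is built by successive extensions of simple subquotients of $\mathcal{F}$, hence stays in $\mathcal{C}(\mathcal{E}_2\otimes\mathcal{L})$; your route via Krull--Schmidt decomposition of the subobject and a second pass of the Hom-vanishing computation also works but re-does the Atiyah analysis, and it additionally requires you to note (via Remark \ref{rem:rem of lekaus's work1}(2) and the chain $\mathcal{E}_j\subset\mathcal{E}_{j+1}$) that every $\mathcal{E}_s\otimes\mathcal{L}^{\otimes i}$, $i=0,\dots,m-1$, really is an object of $\mathcal{C}(\mathcal{E}_2\otimes\mathcal{L})$ --- part (1) alone only gives the forward implication. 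The largest divergence is (3): the paper takes the bottom step of the semifinite filtration to get an embedding $\mathcal{L}\hookrightarrow\mathcal{E}$ of a torsion line bundle, deduces $\Gamma(\mathcal{E}\otimes\mathcal{L}^{\vee})\neq 0$, and invokes the uniqueness in Theorem \ref{thm:atiyah}(1) to get $\mathcal{E}\otimes\mathcal{L}^{\vee}\simeq\mathcal{E}_r$; your determinant computation ($\Det\mathcal{E}\simeq\mathcal{M}^{\otimes r}\simeq\mathcal{L}_1\otimes\cdots\otimes\mathcal{L}_r$ torsion) is a clean alternative that avoids the $\Gamma\neq 0$ characterization, at the cost of first invoking Theorem \ref{thm:atiyah}(2). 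One small correction there: divisibility of ${\rm Pic}^0(X)$ is neither needed nor sufficient for the last step --- the implication is elementary, since $(\mathcal{M}^{\otimes r})^{\otimes d}\simeq\mathcal{O}_X$ already gives $\mathcal{M}^{\otimes rd}\simeq\mathcal{O}_X$, so $\mathcal{M}$ is torsion outright.
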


\begin{proof}
(1) Let $\mathcal{E}$ be an indecomposable bundle in $\mathcal{C}(\mathcal{E}_2\otimes\mathcal{L})$. By Theorem \ref{thm:atiyah}(2), we can write $\mathcal{E}=\mathcal{E}_r\otimes\mathcal{L}'$ for some $r>0$ and some line bundle $\mathcal{L}'$ of degree 0. Since $\mathcal{E}$ is a subquotient of $\oplus_{j=1}^{t}P_j$ for some $P_j\in S(\mathcal{E}_2\otimes\mathcal{L})$, there exists an indecomposable bundle $\mathcal{F}$ in $\mathcal{S}(X)$ such that $\Hom_{\calO_X}(\mathcal{F},\mathcal{E})\neq 0$ and $\Hom_{\calO_X}(\mathcal{F}, \oplus_{j=1}^{t}P_j)\neq 0$. From the same reason as in the proof of Lemma  \ref{lem:nil on elliptic}(1), we can find that $\mathcal{L}'$ must be of the form $\mathcal{L}^{\otimes i}$ for some $i=0,\dots,m-1$(Remark \ref{rem:rem of lekaus's work1}(2)). This finishes the proof.

(2) Assume that the order of $\mathcal{L}$ is $m>0$. For any $i=1,\dots,m$ and $r>0$, the bundle $\mathcal{E}\overset{{\rm def}}{=}\mathcal{E}_r\otimes\mathcal{L}$ has a filtration 
\begin{equation*}
\mathcal{E}=\mathcal{E}^{(0)}\supset\dots\supset\mathcal{E}^{(r)}=0
\end{equation*}
with $\mathcal{E}^{(i)}/\mathcal{E}^{(i+1)}\simeq\mathcal{L}$ for all $i$ and $\mathcal{E}$ is semifinite. By the previous result (1), we find that $\mathcal{C}(\mathcal{E}_2\otimes\mathcal{L})$ is a full subcategory of $\mathcal{C}^{EN}(X)$. 
We show that it is closed under taking subobjects in $\mathcal{C}^{EN}(X)$. Take  $\mathcal{E}\in\Ob(\mathcal{C}^{EN}(X))$, $\mathcal{F}\in\Ob(\mathcal{C}(\mathcal{E}_2\otimes\mathcal{L}))$ and an inclusion $\mathcal{E}\hookrightarrow\mathcal{F}$ in $\mathcal{C}^{EN}(X)$. Since the semi-simplification of an object of $\mathcal{C}^{EN}(X)$ is unique, up to isomorphisms, and $\mathcal{E}$ must be obtained by successive extensions of the simple objects which are subquotients of $\mathcal{F}$. Therefore, $\mathcal{E}$ also belongs to $\mathcal{C}(\mathcal{E}_2\otimes\mathcal{L})$.

(3) Take an indecomposable bundle $\mathcal{E}$ in $\mathcal{C}^{EN}(X)$. We must show that 
it is of the form $\mathcal{E}_r\otimes\mathcal{L}$ for some $r>0$ and some torsion line bundle $\mathcal{L}$. By definition, there exists a torsion line bundle $\mathcal{L}$ and an inclusion $\mathcal{L}\hookrightarrow\mathcal{E}$ in $\Coh (X)$(Remark \ref{rem:finite bundle on elliptic curve}). Then we have 
$\Gamma(\mathcal{E}\otimes\mathcal{L}^{\vee})=\Hom_{\calO_X}(\mathcal{L}, \mathcal{E})\neq 0$ 
and by Theorem \ref{thm:atiyah}(1), $\mathcal{E}\otimes\mathcal{L}^{\vee}$ must be isomorphic to $\mathcal{E}_r$, where $r=\rank{\mathcal{E}}$, whence $\mathcal{E}\simeq\mathcal{E}_r\otimes\mathcal{L}$.
\end{proof}

\begin{lem}\label{lem:essential extension}
Let $\mathcal{L}$ be a torsion line bundle on $X$. Fix a surjection of sheaves $f:\mathcal{E}_r\otimes\mathcal{L}\to\mathcal{L}$ with $\Ker(f)=\mathcal{E}_{r-1}\otimes\mathcal{L}$. Then $f$ defines an essential extension in
$\mathcal{S}(X)$. 
\end{lem}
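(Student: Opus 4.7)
The plan is to verify the defining property of an essential extension directly: every nonzero subobject $\mathcal{F}\subset\mathcal{E}_r\otimes\mathcal{L}$ in $\mathcal{S}(X)$ has nonzero intersection with $\Ker(f)=\mathcal{E}_{r-1}\otimes\mathcal{L}$. Suppose for contradiction that $\mathcal{F}\cap(\mathcal{E}_{r-1}\otimes\mathcal{L})=0$. Then the composition $f|_{\mathcal{F}}:\mathcal{F}\to\mathcal{L}$ is a monomorphism in $\mathcal{S}(X)$. Since $\mathcal{L}$ has rank $1$ and $\mathcal{F}\neq0$, $\mathcal{F}$ is itself a line subbundle of $\mathcal{E}_r\otimes\mathcal{L}$, and by semistability (the ambient bundle being semistable of slope $0$) $\mathcal{F}$ has degree $0$.

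Next I would use the fact that a nonzero morphism between two line bundles of degree $0$ on the elliptic curve $X$ must be an isomorphism of the source onto the target. Indeed $\Hom_{\calO_X}(\mathcal{F},\mathcal{L})\simeq\Gamma(\mathcal{F}^{\vee}\otimes\mathcal{L})$, and a degree-$0$ line bundle on $X$ admits a nonzero global section if and only if it is trivial; hence $\mathcal{F}\simeq\mathcal{L}$. Then $f|_{\mathcal{F}}$ is a nonzero element of $\Hom_{\calO_X}(\mathcal{L},\mathcal{L})=k$, so it is a scalar and, in particular, an isomorphism.

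This last conclusion provides a splitting of $f$, i.e. an isomorphism $\mathcal{E}_r\otimes\mathcal{L}\simeq(\mathcal{E}_{r-1}\otimes\mathcal{L})\oplus\mathcal{L}$. But $\mathcal{E}_r\otimes\mathcal{L}$ is indecomposable: $\mathcal{E}_r$ is indecomposable by Theorem \ref{thm:atiyah}, and tensoring with the invertible sheaf $\mathcal{L}$ (respectively $\mathcal{L}^{\vee}$) preserves direct sum decompositions, so any decomposition of $\mathcal{E}_r\otimes\mathcal{L}$ would yield one of $\mathcal{E}_r$. This contradiction finishes the argument.

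The only minor subtlety I foresee is the passage between subobjects in the abstract abelian category $\mathcal{S}(X)$ and honest subbundles in $\Coh(X)$, needed to speak of the rank and degree of $\mathcal{F}$. This is built into Nori's construction: $\mathcal{S}(X)$ is a full abelian subcategory of $\Coh(X)$ in which short exact sequences are sheaf-theoretic, so a subobject $\mathcal{F}\subset\mathcal{E}_r\otimes\mathcal{L}$ in $\mathcal{S}(X)$ is in particular a subsheaf which is locally free and semistable of slope $0$, and the intersection $\mathcal{F}\cap(\mathcal{E}_{r-1}\otimes\mathcal{L})$ computed in $\mathcal{S}(X)$ coincides with the sheaf-theoretic intersection. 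With this observation the argument above is complete; no substantial technical obstacle is expected.
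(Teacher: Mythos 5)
You have proved a different statement from the one the lemma (and its application) requires. Your argument establishes that $\Ker(f)=\mathcal{E}_{r-1}\otimes\mathcal{L}$ is an \emph{essential subobject} of $\mathcal{E}_r\otimes\mathcal{L}$, i.e.\ every nonzero subobject meets it — the injective-hull notion of ``essential extension.'' But the notion the paper needs, as its own proof and the use of this lemma in Lemma \ref{lem:projective generator} via \cite[Lemma 6.5.6]{sz09} (projective covers) make explicit, is that $f$ is an \emph{essential epimorphism}: no proper subobject $\mathcal{F}\subsetneq\mathcal{E}_r\otimes\mathcal{L}$ in $\mathcal{S}(X)$ satisfies $f(\mathcal{F})=\mathcal{L}$. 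These are dual, inequivalent conditions (for finite-length objects the former says $\Ker(f)$ contains the socle, the latter that $\Ker(f)$ lies in the radical), and your argument only treats subobjects $\mathcal{F}$ with $\mathcal{F}\cap\Ker(f)=0$. Already for $r=3$ and $\mathcal{L}=\calO_X$ it says nothing about a hypothetical rank-$2$ subobject $\mathcal{F}\subsetneq\mathcal{E}_3$ with $\mathcal{F}\cap\mathcal{E}_2$ of rank $1$ and $f(\mathcal{F})=\calO_X$; excluding such $\mathcal{F}$ is the actual content of the lemma.

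That excluded case is where all the work lies in the paper's proof: after reducing to $\mathcal{L}=\calO_X$, one observes that a proper subobject $\mathcal{F}$ surjecting onto $\calO_X$ is unipotent, hence a direct sum of bundles $\mathcal{E}_{r_i}$, hence (since $1\le\dim\Gamma(\mathcal{F})\le\dim\Gamma(\mathcal{E}_r)=1$) indecomposable and isomorphic to some $\mathcal{E}_s$ with $s<r$; dualizing $\mathcal{E}_s\hookrightarrow\mathcal{E}_r\twoheadrightarrow\calO_X$ and comparing global sections ($\Gamma(\mathcal{E}_r^{\vee})=\Gamma(\mathcal{E}_s^{\vee})=k$ while $\Gamma(\Ker(g^{\vee}))\neq 0$ forces $\Gamma(g^{\vee})=0$) then yields a contradiction. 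The part you did prove is correct — the splitting/indecomposability argument for a subobject complementary to the kernel is fine, and it is essentially the $s=0$ instance of the above — but it covers only the easy case. You need to add an argument for proper subobjects that meet the kernel nontrivially yet still surject onto $\mathcal{L}$, for instance along the lines just sketched.
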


\begin{proof}
Take a nonzero proper subobject $g:\mathcal{F}\hookrightarrow\mathcal{E}_r\otimes\mathcal{L}$ in $\mathcal{S}(X)$. We must show that  $f\circ g$ can not be an epimorphism. Since the functor $\rm{id}\otimes\mathcal{L}$ induces an equivalence of $\mathcal{S}(X)$ onto itself, we can reduce the lemma to the case where $\mathcal{L}=\mathcal{O}_X$. Assume that $\mathcal{F}\stackrel{g}{\hookrightarrow}\mathcal{E}_r\stackrel{f}{\twoheadrightarrow}\mathcal{O}_X$ is surjective. Since $\mathcal{C}^{\uni}(X)$ is closed under taking subobjects in $\mathcal{S}(X)$, $\mathcal{F}$ is a unipotent bundle. Thus, Lemma \ref{lem:nil on elliptic} (2) implies that $\mathcal{F}$ is of the form $\oplus_{i=1}^{t}\mathcal{E}_{r_i}$. Since we have assumed that $\mathcal{F}$ is nonzero, we find that $1\le\Dim\Gamma(\mathcal{F})$. 
On the other hand, since $\Gamma(\quad)$ is left exact, we have  $\Dim\Gamma(\mathcal{F})\le\Dim\Gamma(\mathcal{E}_r)=1$. For the final equality, we use Theorem \ref{thm:atiyah}(5). Thus $\Gamma(\mathcal{F})=k$ and $\mathcal{F}$ is indecomposable.  Namely, we have $\mathcal{F}\simeq\mathcal{E}_s$ for some $s<r$. Now we may identify $\mathcal{F}$ with $\mathcal{E}_s$. By taking dual of $\mathcal{E}_s=\mathcal{F}\stackrel{g}{\hookrightarrow}\mathcal{E}_r\stackrel{f}{\twoheadrightarrow}\mathcal{O}_X$, we have the following commutative diagram of two exact sequences
\begin{equation*}
\begin{CD}
@. @. \Ker(g^{\vee}) @. @. @.\\
@. @. @VVV @. @. @.\\
0@>>>\mathcal{O}_X@>f^{\vee}>>\mathcal{E}_r^{\vee}@>>>\Cok(f^{\vee})@>>>0\\
@.         @| @Vg^{\vee}VV @VVV @.\\
0@>>>\mathcal{O}_X@>(f\circ g)^{\vee}>>\mathcal{E}_s^{\vee}@>>>\Cok((f\circ g)^{\vee})@>>>0,
\end{CD}
\end{equation*} 
where $g^{\vee}$ is surjective. Since $\Gamma(\quad)$ is left exact and $\Gamma(\mathcal{O}_X)=\Gamma(\mathcal{E}_r^{\vee})=\Gamma(\mathcal{E}_s^{\vee})=k$ (Theorem \ref{thm:atiyah}(3)(5)), we find that $\Gamma(g^{\vee}):\Gamma(\mathcal{E}_r^{\vee})\to\Gamma(\mathcal{E}_s^{\vee})$ is an isomorphism. 

On the other hand, since $\rank\Ker(g^{\vee})=r-s>0$ and both $\mathcal{E}_r^{\vee}$ and $\mathcal{E}_s^{\vee}$ are unipotent bundles, the kernel of the surjection $g^{\vee}:\mathcal{E}_r^{\vee}\to\mathcal{E}_s^{\vee}$ is again a nonzero unipotent bundle.  Then we have an exact sequence
\begin{equation}\label{eq:lem 3.8-1}
0\to\Gamma(\Ker(g^{\vee}))\hookrightarrow \Gamma(\mathcal{E}_r^{\vee})\stackrel{\Gamma(g^{\vee})}{\to}\Gamma(\mathcal{E}_s^{\vee}).
\end{equation}
As $\Gamma(\Ker(g^{\vee}))\neq 0$ and $\Gamma(\mathcal{E}_r^{\vee})=k$, the inclusion $\Gamma(\Ker(g^{\vee}))\hookrightarrow \Gamma(\mathcal{E}_r^{\vee})$ must be a bijective. Since $\Dim(\Gamma(\Ker(g^{\vee})))=\Dim\Gamma(\mathcal{E}_r^{\vee})=\Dim\Gamma
(\mathcal{E}_s^{\vee})=1$, the exactness of (\ref{eq:lem 3.8-1}) implies that $\Gamma(g^{\vee})=0$, which is a contradiction. Hence, $f:\mathcal{E}_r\to\mathcal{O}_X$ is an essential extension.
\end{proof}

\begin{lem}\label{lem:projective generator}

(1) In the category $\mathcal{C}^{\uni}(X)$, we have $\langle\mathcal{E}_r\rangle\subset\langle\mathcal{E}_{r+1}\rangle$ for any $r>0$, and $\Ob(\mathcal{C}^{\uni}(X))=\bigcup\Ob(\langle\mathcal{E}_r\rangle)$. Furthermore, $\mathcal{E}_r$ is a projective generator of $\langle\mathcal{E}_r\rangle$.

(2) Put $P_m\overset{{\rm def}}{=}\bigoplus_{\mathcal{L}^{\otimes m}=\mathcal{O}_X}\mathcal{L}$ for each $m>0$. In $\mathcal{C}^{N}(X)$, we have $\langle P_{m}\rangle\subset\langle P_{mn}\rangle$ for any $m,n>0$ and $\Ob(\mathcal{C}^{N}(X))=\bigcup\Ob(\langle P_m\rangle)$. Furthermore, $P_m$ is a projective generator of $\langle P_m\rangle$. 

(3) In $\mathcal{C}^{EN}(X)$, we have $\langle\mathcal{E}_r\otimes P_m\rangle\subset\langle\mathcal{E}_{r+1}\otimes P_{mn}\rangle$ for any $r, m, n>0$ and $\Ob(\mathcal{C}^{EN}(X))=\bigcup\Ob(\langle\mathcal{E}_r\otimes P_m\rangle)$. Furthermore, $\mathcal{E}_r\otimes P_m$ is a projective generator of $\langle\mathcal{E}_r\otimes P_m\rangle$. 
\end{lem}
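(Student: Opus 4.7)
\smallskip

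The plan is to treat the three statements in turn, using Atiyah's classification (Theorem 3.1) together with Lemmas 3.6 and 3.7 to reduce each category to an explicit list of indecomposables, and then verifying generator/projectivity by a direct analysis. The three parts are essentially parallel: $(1)$ handles the unipotent direction, $(2)$ the essentially finite one, and $(3)$ is obtained by combining them via tensor products.

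For part $(1)$, I would first derive the chain $\langle\calE_r\rangle\subset\langle\calE_{r+1}\rangle$ directly from the exact sequence $0\to\calO_X\to\calE_{r+1}\to\calE_r\to 0$ in Theorem 3.1(1), which exhibits $\calE_r$ as a quotient of $\calE_{r+1}$. For the exhaustion $\Ob(\calC^{\uni}(X))=\bigcup_r\Ob(\langle\calE_r\rangle)$, I would invoke Lemma 3.6(1) to write any object of $\calC^{\uni}(X)$ as a finite direct sum of the $\calE_{s_i}$'s, and pick $r=\max_i s_i$; since each $\calE_{s_i}$ is a quotient of $\calE_r$ by iterating Theorem 3.1(1), this also shows that $\calE_r$ generates $\langle\calE_r\rangle$. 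For projectivity, the key observation is that the full subcategory $\langle\calE_r\rangle$ of $\calC^{\uni}(X)$ consists precisely of those unipotent bundles whose indecomposable summands all have rank $\le r$; via Atiyah's result this corresponds, on the Tannakian side $\calC^{\uni}(X)\simeq\Rep(\pi_1^{\uni}(X,x))$, to finite-dimensional modules over the Artinian algebra $A_r=k[t]/(t^r)$ (where $t$ acts by the nilpotent endomorphism coming from $\calE_2$), and $\calE_r$ corresponds to the regular module $A_r$ itself. Since a finite-dimensional algebra is always a projective generator in its own module category, this gives the projectivity.

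For part $(2)$, I would use that in characteristic $0$ the category $\calC^N(X)$ is semisimple (Remark 2.8) and, by Remark 3.5, its simple objects are exactly the torsion line bundles on $X$. The inclusion $\langle P_m\rangle\subset\langle P_{mn}\rangle$ then reduces to the obvious fact that every $m$-torsion line bundle is also $mn$-torsion, so $P_m$ is a direct summand of $P_{mn}$; the exhaustion property is clear because every object in $\calC^N(X)$ is a finite direct sum of torsion line bundles and a common order $m$ exists. Projectivity is automatic by semisimplicity, and the generator property is built into the definition of $P_m$ as the sum of all $m$-torsion line bundles. For part $(3)$ I would first derive the stated inclusion by tensoring the surjection $\calE_{r+1}\twoheadrightarrow\calE_r$ with the direct summand $P_m\hookrightarrow P_{mn}$. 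The exhaustion uses Lemma 3.7(3): every indecomposable object of $\calC^{EN}(X)$ is of the form $\calE_r\otimes\calL$ for some $r>0$ and torsion line bundle $\calL$, so for a given $\calE\in\Ob(\calC^{EN}(X))$ I would take $r$ to bound the ``unipotent lengths'' of the summands of $\calE$ and $m$ to annihilate all their Jordan–H\"older factors (torsion line bundles), then realize $\calE$ as a subquotient of a sum of copies of $\calE_r\otimes P_m$. For the projective-generator property I would combine the two previous analyses: under Tannaka duality $\langle\calE_r\otimes P_m\rangle_\otimes$ corresponds to representations of a group scheme of the form (unipotent) $\rtimes$ (finite reductive), and $\langle\calE_r\otimes P_m\rangle$ corresponds to modules over the finite-dimensional algebra $A_r\otimes k[\mu_m\text{-like}]$ where $\calE_r\otimes P_m$ is again the regular module, which is a projective generator by the Artinian argument of part $(1)$.

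The main obstacle I expect is the verification of projectivity itself, specifically part $(3)$: one must ensure that tensoring the ``unipotent'' projective $\calE_r$ with the ``finite'' projective $P_m$ continues to give a projective object in the larger non-semisimple category $\langle\calE_r\otimes P_m\rangle$. The cleanest way is to exploit Proposition 2.10(2), which decouples the unipotent and finite parts at the level of $\Hom$'s, so that an arbitrary extension $0\to\calF\to\calE\to\calE_r\otimes P_m\to 0$ in $\langle\calE_r\otimes P_m\rangle$ splits into pieces indexed by $m$-torsion characters, each of which reduces to the unipotent splitting problem already settled in part $(1)$. Once this is in place, generation follows formally from the definition of $\langle\cdot\rangle$, and the monotonicity statements about $r$ and $m$ are routine.
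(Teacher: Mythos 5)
Your proposal is correct, but the projectivity arguments run along a genuinely different track from the paper's. The paper proves all three projective-generator claims by the same two-step criterion from Szamuely's book: first it shows (Lemma \ref{lem:essential extension}) that the surjection $\calE_r\otimes\calL\to\calL$ is an \emph{essential} extension in $\mathcal{S}(X)$, and then it verifies the numerical criterion $\Dim\Hom_{\calO_X}(\calE_r\otimes\calL,\calE_r\otimes\calL)=r=r\cdot\Dim\End(\calL)$ using Atiyah's formulas (Theorem \ref{thm:atiyah}(3),(4),(5)); part (2) is likewise done by a direct count showing $\Hom_{\calO_X}(P_m,-)$ is exact. You instead identify $\langle\calE_r\rangle$ with the module category of the Artinian algebra $k[t]/(t^r)$ (using Lekaus's computation $\pi_1^{\uni}(X,x)\simeq\mathbb{G}_a$, which is legitimately available as Theorem \ref{thm:lekaus}(1) plus Lemma \ref{lem:nil on elliptic}(2)) and invoke that the regular module is a projective generator; for (2) you get projectivity for free from semisimplicity of $\calC^N(X)$; and for (3) you decompose $\langle\calE_r\otimes P_m\rangle$ into blocks indexed by $m$-torsion line bundles via Proposition \ref{prop:n-nil}(2) and reduce to part (1). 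All of this is sound; your approach buys conceptual transparency (the projective-generator property becomes a statement about regular modules over explicit Artinian algebras), while the paper's buys self-containedness, as it never leaves Atiyah's dimension formulas and never needs the computation of the Tannaka group. One caution: your first-pass suggestion for (3), that $\langle\calE_r\otimes P_m\rangle_\otimes$ corresponds to representations of a semidirect product of a unipotent group by a finite group, is dangerously close to assuming Theorem \ref{thm:main1}, which this lemma is used to prove; your fallback via the block decomposition coming from $\Hom(\calE_s\otimes\calL,\calE_{s'}\otimes\calL')=\Hom(\calL,\calL')\otimes\Hom(\calE_s,\calE_{s'})$ (and the analogous vanishing of extensions between distinct blocks, which you should state explicitly, e.g.\ via Serre duality on the elliptic curve) is the version you should keep.
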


\begin{proof}
(1) For each $r>0$, we have $\mathcal{E}_r\subset\mathcal{E}_{r+1}$, which implies that $\langle\mathcal{E}_r\rangle\subset\langle\mathcal{E}_{r+1}
\rangle$. Lemma \ref{lem:nil on elliptic} implies that $\Ob(\mathcal{C}^{\uni}(X))=\bigcup\Ob(\langle\mathcal{E}_r\rangle)$. We prove the final assertion. It suffices to show that the natural surjection $\mathcal{E}_r\to\mathcal{O}_X$ is an essential extension in  $\langle\mathcal{E}_r\rangle$ and that $\mathcal{E}_r$ is a projective object of $\langle\mathcal{E}_r\rangle$~\cite[Chapter 6, Lemma 6.5.6]{sz09}. The first assertion follows from Lemma \ref{lem:essential extension}. We will show that $\mathcal{E}_r$ is projective in $\langle\mathcal{E}_r\rangle$. Since $\mathcal{E}_r\to\mathcal{O}_X$ is an essential extension, it suffices to show that the equality
\begin{equation*}
\Dim\Hom_{\calO_X}(\mathcal{E}_r, \mathcal{E}_r)=r\times \Dim\End(\mathcal{O}_X)=r
\end{equation*}
holds~\cite[Chapter 6, Lemma 6.5.8]{sz09}. This follows from Theorem \ref{thm:atiyah}(3), (4) and (5). 

(2) From the definition of $P_m$, the first two assertion follows. We will show that  $P_m$ is a projective generator of $\langle P_m\rangle$. First note that any objects in $\langle P_m\rangle$ is a direct sum of $m$-torsion line bundles. Therefore, for any $\mathcal{E}\in\Ob(\langle P_m\rangle)$, we have
\begin{equation}\label{eq:lem 3.9-1}
\Dim_k\Hom_{\calO_X}(P_m, \mathcal{E})=\rank\mathcal{E}.
\end{equation}
Take any exact sequence in $\langle P_m\rangle$,
\begin{equation*}
0\to\mathcal{E}'\to\mathcal{E}\stackrel{f}{\to}\mathcal{E}''\to 0.
\end{equation*}
Since $\Hom_{\calO_X}(P_m, \quad)$ is left exact, we have
\begin{equation*}
\begin{aligned}
\Dim_k\im(\Hom_{\calO_X}(P_m, f))&=\Dim_k\Hom_{\calO_X}(P_m, \mathcal{E})-\Dim_k\Hom_{\calO_X}(P_m, \mathcal{E}')\\
&\stackrel{(\ref{eq:lem 3.9-1})}{=}\rank\mathcal{E}-\rank\mathcal{E}'=\rank\mathcal{E}''
\stackrel{(\ref{eq:lem 3.9-1})}{=}\Dim_k\Hom_{\calO_X}(P_m, \mathcal{E}''),
\end{aligned}
\end{equation*}
which tells us the left exact functor $\Hom_X(P_m, \quad)$ is indeed an exact functor of $\langle P_m\rangle$. Therefore, $P_m$ is a projective object in $\langle P_m\rangle$. Next, we show that $P_m$ is a generator of $\langle P_m\rangle$. Since we have already proved that $P_m$ is projective in $\langle P_m\rangle$, it suffices to show that, for any $0\neq\mathcal{E}\in\Ob(\langle P_m\rangle)$, we have $\Hom_X(P_m,\mathcal{E})\neq 0$. It follows from the equality (\ref{eq:lem 3.9-1}).

(3) For each $r, m, n>0$, we have $\mathcal{E}_r\otimes P_m\subset\mathcal{E}_{r+1}\otimes P_{mn}$, which implies that$\langle\mathcal{E}_r\otimes P_m\rangle\subset\langle\mathcal{E}_{r+1}\otimes P_{mn}\rangle$. Lemma \ref{lem:semi-finite on elliptic} implies that $\Ob(\mathcal{C}^{EN}(X))=\bigcup\Ob(\langle\mathcal{E}_r\otimes P_m\rangle)$. 
The last assertion follows from Lemma \ref{lem:essential extension} combined with the equality 
$\Dim\Hom_{\calO_X}(\mathcal{E}_r\otimes\mathcal{L}, \mathcal{E}_r\otimes\mathcal{L})=r$ 
for any torsion line bundle $\mathcal{L}$.
\end{proof}

Now we obtain the following:

\begin{thm}\label{thm:main1}
There is an isomorphism of affine $k$-group schemes
\begin{equation*}
\pi_1^{EN}(X, x)\simeq \pi_1^{N}(X, x)\times\pi_1^{\uni}(X, x).
\end{equation*}
\end{thm}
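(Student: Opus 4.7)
The plan is to invoke Tannaka duality. From diagram \eqref{eq:int-diag}, the pair $(p^N,p^{\uni})$ yields a canonical morphism
\begin{equation*}
\phi\overset{{\rm def}}{=}(p^N,p^{\uni}):\pi_1^{EN}(X,x)\to\pi_1^{N}(X,x)\times\pi_1^{\uni}(X,x),
\end{equation*}
and it suffices to prove that $\phi$ is simultaneously faithfully flat and a closed immersion. Dually, this amounts to showing that the tensor functor
\begin{equation*}
\Phi:\Rep(\pi_1^{N}(X,x)\times\pi_1^{\uni}(X,x))\to\mathcal{C}^{EN}(X),\qquad V\boxtimes W\mapsto\mathcal{F}_V\otimes_{\mathcal{O}_X}\mathcal{E}_W
\end{equation*}
(where $\mathcal{F}_V\in\mathcal{C}^N(X)$ and $\mathcal{E}_W\in\mathcal{C}^{\uni}(X)$ are the bundles corresponding to $V$ and $W$) is fully faithful, has essential image closed under subobjects, and has the property that every object of $\mathcal{C}^{EN}(X)$ is a subquotient of an object in its image; see \cite[Proposition 2.21]{dm82}.

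The closed-immersion part follows at once from Lemma \ref{lem:projective generator}(3): every $\mathcal{E}\in\Ob(\mathcal{C}^{EN}(X))$ lies in some $\langle\mathcal{E}_r\otimes P_m\rangle$, hence is a subquotient of $(\mathcal{E}_r\otimes P_m)^{\oplus N}=\Phi(P_m^{\oplus N}\boxtimes\mathcal{E}_r)$. For full faithfulness of $\Phi$, Proposition \ref{prop:n-nil}(2) supplies the required Hom-isomorphism on pure tensors $V\boxtimes W$; since $\pi_1^{N}(X,x)$ is pro-reductive in characteristic $0$ (Remark \ref{rem:semi-simple}), every object of $\Rep(\pi_1^{N}(X,x)\times\pi_1^{\uni}(X,x))$ decomposes as a direct sum $\bigoplus_i V_i\boxtimes W_i$ with $V_i$ simple, via isotypic decomposition for the $\pi_1^{N}$-action; full faithfulness on pure tensors therefore extends to full faithfulness on all objects.

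I expect the main obstacle to be verifying that the essential image of $\Phi$ is closed under subobjects. Given an inclusion $\mathcal{G}\hookrightarrow\mathcal{F}\otimes\mathcal{E}$ in $\mathcal{C}^{EN}(X)$ with $\mathcal{F}\in\mathcal{C}^N(X)$ and $\mathcal{E}\in\mathcal{C}^{\uni}(X)$, the strategy is first to decompose $\mathcal{F}=\bigoplus_i\mathcal{L}_i^{\oplus a_i}$ into pairwise non-isomorphic torsion line bundles (Remark \ref{rem:finite bundle on elliptic curve}). The key Hom-vanishing
\begin{equation*}
\Hom_{\mathcal{O}_X}(\mathcal{L}_i\otimes\mathcal{E},\mathcal{L}_j\otimes\mathcal{E}')=\Gamma(\mathcal{L}_i^{\vee}\otimes\mathcal{L}_j\otimes\mathcal{E}^{\vee}\otimes\mathcal{E}')=0\quad\text{for }\mathcal{L}_i\not\simeq\mathcal{L}_j,
\end{equation*}
which follows from the proof of Proposition \ref{prop:n-nil}(2) applied to the non-trivial finite indecomposable bundle $\mathcal{L}_i^{\vee}\otimes\mathcal{L}_j$ and the unipotent bundle $\mathcal{E}^{\vee}\otimes\mathcal{E}'$, forces a decomposition $\mathcal{G}=\bigoplus_i\mathcal{G}_i$ with $\mathcal{G}_i\hookrightarrow\mathcal{L}_i\otimes\mathcal{E}^{\oplus a_i}$. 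Tensoring with $\mathcal{L}_i^{\vee}$, the subobject $\mathcal{L}_i^{\vee}\otimes\mathcal{G}_i\subset\mathcal{E}^{\oplus a_i}$ has Jordan--H\"older factors (in $\mathcal{C}^{EN}(X)$) among those of $\mathcal{E}^{\oplus a_i}$, all of which are isomorphic to $\mathcal{O}_X$, so $\mathcal{L}_i^{\vee}\otimes\mathcal{G}_i$ is unipotent. Hence $\mathcal{G}_i=\mathcal{L}_i\otimes\mathcal{E}'_i$ with $\mathcal{E}'_i\in\mathcal{C}^{\uni}(X)$, placing $\mathcal{G}$ in the essential image of $\Phi$ and completing the argument.
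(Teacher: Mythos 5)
Your argument is correct and is in essence the paper's second proof of Theorem \ref{thm:main1}: the same comparison functor $\Phi$, the same use of Proposition \ref{prop:n-nil}(2) for full faithfulness, and the same reliance on the classification of indecomposable semifinite bundles. The main difference is one of packaging. The paper notes that Lemma \ref{lem:semi-finite on elliptic}(3) together with the Krull--Schmidt theorem makes $\Phi$ \emph{essentially surjective} (every indecomposable object of $\calC^{EN}(X)$ is already a pure tensor $\calE_r\otimes\calL$), so full faithfulness alone shows $\Phi$ is an equivalence; your detour through the two separate Tannakian criteria (faithfully flat plus closed immersion) is therefore more work than needed, and your entire closed-under-subobjects paragraph could be dropped. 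On the parts you do carry out: your isotypic decomposition of $\Rep(\pi_1^N\times\pi_1^{\uni})$ into a direct sum of pure tensors requires $\End(V_i)=k$ for $V_i$ simple, i.e.\ uses that $k$ is algebraically closed (which holds throughout Section 3); granting that, it gives a slightly cleaner full-faithfulness argument than the paper's, which only embeds objects into sums of pure tensors and then invokes Proposition \ref{prop:appendix1}. Finally, the step ``the Hom-vanishing forces $\calG=\bigoplus_i\calG_i$'' is stated too loosely: vanishing of $\Hom$ between the summands of $N_1\oplus N_2$ does not by itself make every subobject split (by Goursat, a non-split subobject corresponds to an isomorphism between \emph{subquotients} of $N_1$ and $N_2$). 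What saves you is that all Jordan--H\"older factors of $\calL_i\otimes\calE^{\oplus a_i}$ are isomorphic to the simple object $\calL_i$, so the summands have pairwise disjoint composition factors and the block decomposition of subobjects follows; this is exactly the ``uniqueness of the semi-simplification'' argument used in the proof of Lemma \ref{lem:semi-finite on elliptic}(2), and your Hom-vanishing for $\calE=\calE'=\calO_X$ is precisely the statement $\calL_i\not\simeq\calL_j$ that certifies the disjointness.
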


\begin{proof}[proof 1]
We calculate a Tannakian fundamental group by using the notion of projective generators~\cite[Section 6.1]{de90}. By Lemma \ref{lem:projective generator}(1),(2), we have
\begin{equation*}
\begin{aligned}
\pi_1^{\uni}(X, x)&=\varprojlim_{r}\Spec(\End(\mathcal{E}_r)^{\vee}),\\
\pi_1^{N}(X, x)&=\varprojlim_{m}\Spec(\End({P}_m)^{\vee}).
\end{aligned}
\end{equation*}
Combined with  Lemma \ref{lem:projective generator}(3), we have
\begin{equation*}
\begin{aligned}
\pi_1^{EN}(X, x)&=\varprojlim_{r,m}\Spec(\End(\mathcal{E}_r\otimes P_m)^{\vee})=\varprojlim_{r,m}\Spec(\End(P_m)^{\vee}\otimes \End(\mathcal{E}_r)^{\vee})\\
&=\varprojlim_{r,m}(\Spec(\End(P_m)^{\vee})\times\Spec(\End(\mathcal{E}_r)^{\vee}))\\
&=\pi_1^{N}(X, x)\times\pi_1^{\uni}(X, x),
\end{aligned}
\end{equation*}
where, for the second equality, we use Proposition \ref{prop:n-nil}(2). 
\end{proof}

\begin{proof}[proof 2]
We prove the theorem by using the notion of tensor products of Tannakian categories~\cite[Chapter 5]{de90}. Let $A,G$ be affine group schemes over $k$.  The natural inclusions $\Rep(A),\Rep(G)\subset\Rep(A\times G)$ make $\Rep(A\times G)$ the tensor product of $\Rep(A)$ and $\Rep(G)$: $\Rep(A\times G)=\Rep(A)\otimes\Rep(G)$~\cite[5.18]{de90}. First we note that every object in $\Rep(A\times G)$ can be embedded into an object of the form $\oplus_{i=1}^{m}V_i\otimes U_i$, where $V_i$ are objects in $\Rep(A)$ and $U_i$ are objects in $\Rep(G)$. Indeed, let $W$ be an object in $\Rep(A\times G)$. We endows $M\overset{{\rm def}}{=}W\otimes_k k[A\times G]$ with a comodule structure by $\rho_M\overset{{\rm def}}{=}{\rm id}_W\otimes\Delta$, where $\Delta$ is the coproduct of $k[A\times G]$. Then there is a non-canonical isomorphisim as comodules $M\simeq k[A\times B]^{\oplus n}$, where $n=\Dim W$. The equality $\rho_M\circ\rho_W=({\rm id}\otimes\Delta)\circ\rho_W=(\rho_W\otimes {\rm id})\circ \rho_W$ implies that $\rho_W:W\to M$ is a comodule morphism. It is injective because $({\rm id}\otimes \epsilon)\rho_W={\rm id}$, where $\epsilon:k[A\times G]\to k$ is a counit. Therefore, we find that there is an injective comudule morphism $W\to k[A\times G]^{\oplus n}=(k[A]\otimes k[G])^{\oplus n}$~\cite[Section 3.5 Lemma]{wa79}. Since any comodule can be written as a directed union of finite-dimensional subcomodules~\cite[Section 3.3 Theorem]{wa79}, there are finite-dimensional subrepresentations $V_i(i=1,\dots,m)$ of $k[A]$ and $U_i(i=1,\dots,m)$ of $k[G]$, respectively such that $W\subset\oplus_{i=1}^{m}V_i\otimes  U_i\subset (k[A]\otimes k[G])^{\oplus n}$. 

Now we apply this remark to our setting $A\overset{{\rm def}}{=}\pi_1^{N}(X,x), G\overset{{\rm def}}{=}\pi_1^{\uni}(X, x)$. The natural inclusions $\mathcal{C}^{N}(X),\mathcal{C}^{\uni}(X)\subset\mathcal{C}^{EN}(X)$ induce a tensor functor $\Phi:\Rep(\pi_1^{N}(X,x)\times\pi_1^{\uni}(X,x))=\mathcal{C}^{N}(X)\otimes\mathcal{C}^{\uni}(X)\to\mathcal{C}^{EN}(X)$. Then every object in $\mathcal{C}^{N}(X)\otimes\mathcal{C}^{\uni}(X)$ can be embedded into an object of the form $\oplus_{i=1}^{n}\mathcal{L}_i\otimes\mathcal{E}_i$, where $\mathcal{L}_i$ are objects in $\mathcal{C}^{N}(X)$ and $\mathcal{E}_i$ are objects in $\mathcal{C}^{\uni}(X)$. Thus Proposition \ref{prop:n-nil}(2) implies that $\Phi$ is fully faithful (cf.Proposition \ref{prop:appendix1}). On the other hand, Lemma \ref{lem:semi-finite on elliptic}(3) implies that $\Phi$ is essentially surjective. Hence $\Phi$ is an equivalence. Therefore, we have $\pi_1^{EN}(X, x)\simeq\pi_1^{N}(X,x)\times\pi_1^{\uni}(X,x)$.
\end{proof}


\section{SEMIFINITE BUNDLES ON AN ARBITRARY VARIETY}

In this section, we always assume that $k$ is of characteristic 0 and that $X\in\Ob(\Var)$ with $X(k)\neq\emptyset$. Fix a rational point $x\in X(k)$. If $X\in\Ob(\Cv)$, we denote by $g_X$, or simply $g$, the genus of $X$. 

\subsection{The difference between $\pi_1^{N}$ and $\pi_1^{EN}$ }

 In this subsection, we describe the quotient category of $\mathcal{C}^{N}(X)\hookrightarrow\mathcal{C}^{EN}(X)$ and give a proof of Theorem \ref{thm:int-main2}.
We adopt Esnault-Hai-Sun's method~\cite[Chapter 3]{ehs08}~(cf.~Remark \ref{rem:int-ehs}).
We denote by $\pi_1^{E}(X, x)$ the kernel of this projection $p^{N}$~(Remark \ref{rem:rem of semi-finite2}):
\begin{equation}\label{eq:def pi_1^E}
\pi_1^{E}(X, x)\overset{{\rm def}}{=}\Ker(p^{N}). 
\end{equation}
If $X$ is an elliptic curve, then we have already seen that $\pi_1^E(X,x)=\pi_1^{\uni}(X, x)$ (Theorem \ref{thm:main1}). But, in general case, it is larger than $\pi_1^{\uni}(X, x)$. 
We have an exact sequence of group schemes:
\begin{equation}\label{eq:main ex seq}
1\to\pi_1^{E}(X, x)\to\pi_1^{EN}(X, x)\stackrel{p^{N}}{\to}\pi_1^{N}(X, x)\to 1.
\end{equation}
We describe the representation category $\Rep(\pi_1^{E}(X, x))$ in terms of vector bundles.  
We first show an analogous lemma of \cite[Proposition 3.4]{ehs08}:

\begin{lem}\label{lem:etale-nil1}
Let $\pi:P\to X$ be a finite \'etale torsor over $X$ with $H^{0}(P, \mathcal{O}_P)=k$ and let $\mathcal{E}$ be a unipotent bundle on $X$. Then the natural functor $\pi^{*}$ induces a fully faithful functor of $\mathcal{C}^{\uni}(X)$ into $\mathcal{C}^{\uni}(P)$ and the essential image of $\pi^*$ is closed under taking subobjects in $\calC^{\uni}(P)$. 
\end{lem}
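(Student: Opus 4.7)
The plan is to verify the two assertions separately, adapting Esnault--Hai--Sun's strategy for essentially finite bundles. The key geometric input is that, since $\pi:P\to X$ is finite étale in characteristic $0$, the normalized trace splits the unit $\mathcal{O}_{X}\hookrightarrow\pi_{*}\mathcal{O}_{P}$, giving a decomposition $\pi_{*}\mathcal{O}_{P}\simeq\mathcal{O}_{X}\oplus\mathcal{N}$. As explained in Section 2.2, $\pi_{*}\mathcal{O}_{P}$ is a finite bundle on $X$ (attached to the regular representation of the structure group of the torsor), so its direct summand $\mathcal{N}$ is essentially finite; and the assumption $H^{0}(P,\mathcal{O}_{P})=k$ amounts to $\Gamma(X,\mathcal{N})=0$.

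For full faithfulness, I would combine the $(\pi^{*},\pi_{*})$-adjunction with the projection formula to write
\begin{equation*}
\Hom_{\mathcal{O}_{P}}(\pi^{*}\mathcal{E},\pi^{*}\mathcal{F})=\Hom_{\mathcal{O}_{X}}(\mathcal{E},\mathcal{F})\oplus\Hom_{\mathcal{O}_{X}}(\mathcal{E},\mathcal{F}\otimes\mathcal{N}),
\end{equation*}
with the canonical map $\Hom_{\mathcal{O}_{X}}(\mathcal{E},\mathcal{F})\to\Hom_{\mathcal{O}_{P}}(\pi^{*}\mathcal{E},\pi^{*}\mathcal{F})$ identified with the inclusion of the first summand. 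Rewriting the second summand as $\Gamma(X,\mathcal{N}\otimes(\mathcal{E}^{\vee}\otimes\mathcal{F}))$ and applying Proposition \ref{prop:n-nil}(2) to the essentially finite bundle $\mathcal{N}$ and the unipotent bundle $\mathcal{E}^{\vee}\otimes\mathcal{F}$, this factors as $\Gamma(\mathcal{N})\otimes\Gamma(\mathcal{E}^{\vee}\otimes\mathcal{F})=0$, which proves the first assertion.

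For the closure under subobjects, given $\mathcal{F}\hookrightarrow\pi^{*}\mathcal{E}$ in $\mathcal{C}^{\uni}(P)$, I would induct on the length $n$ of a unipotent filtration $\mathcal{F}=\mathcal{F}^{(0)}\supset\cdots\supset\mathcal{F}^{(n)}=0$. In the base case $n=1$, the inclusion $\mathcal{O}_{P}=\pi^{*}\mathcal{O}_{X}\simeq\mathcal{F}\hookrightarrow\pi^{*}\mathcal{E}$ corresponds via full faithfulness to a morphism $\mathcal{O}_{X}\to\mathcal{E}$, which by faithfully flat descent is automatically a subbundle inclusion whose pullback recovers $\mathcal{F}$. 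For the inductive step, apply the base case to the bottom step $\mathcal{F}^{(n-1)}\simeq\mathcal{O}_{P}$ to extract a subbundle $\mathcal{E}_{0}\simeq\mathcal{O}_{X}\subset\mathcal{E}$ with $\pi^{*}\mathcal{E}_{0}=\mathcal{F}^{(n-1)}$; then $\mathcal{F}/\mathcal{F}^{(n-1)}\hookrightarrow\pi^{*}(\mathcal{E}/\mathcal{E}_{0})$ is unipotent of length $n-1$, so by the induction hypothesis it descends to some $\mathcal{F}'\subset\mathcal{E}/\mathcal{E}_{0}$ in $\mathcal{C}^{\uni}(X)$, and the preimage $\tilde{\mathcal{F}}\subset\mathcal{E}$ of $\mathcal{F}'$ is the required unipotent subbundle with $\pi^{*}\tilde{\mathcal{F}}=\mathcal{F}$.

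The main obstacle I anticipate is the careful bookkeeping around ``subbundle'' in the inductive step: one must confirm via faithfully flat descent that the morphism $\mathcal{O}_{X}\to\mathcal{E}$ produced in the base case is genuinely a subbundle inclusion with pullback equal to $\mathcal{F}^{(n-1)}$, and that the successive quotients stay locally free so the induction remains within the category of vector bundles. These verifications are routine once one formulates ``being a subbundle'' as a flat-local property.
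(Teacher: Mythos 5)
Your proof is correct, but it takes a genuinely different route from the paper's. The paper does not argue at the level of Hom-spaces and filtrations at all: it proves the equivalent statement that the induced homomorphism $\pi_1^{\uni}(P,p)\to\pi_1^{\uni}(X,x)$ is faithfully flat, by invoking Nori's description of the dual Hopf algebra $A_X=R_X^{\vee}$ with $A_X=\varprojlim_n A_X/\mathfrak{m}_X^n$ and $\mathfrak{m}_X/\mathfrak{m}_X^2=H^1(X,\mathcal{O}_X)^{\vee}$. It reduces everything to the injectivity of $H^1(\mathcal{O}_X)\to H^1(\mathcal{O}_P)$ --- proved by the same kind of computation you make, namely $\Hom_{\mathcal{O}_P}(\pi^*\mathcal{E},\mathcal{O}_P^{\oplus 2})\simeq\Hom_{\mathcal{O}_X}(\mathcal{E},\mathcal{O}_X^{\oplus 2})$ via the projection formula, Proposition \ref{prop:n-nil}(2) and $H^0(P,\mathcal{O}_P)=k$ --- and then concludes that $\mathfrak{m}_P/\mathfrak{m}_P^2\to\mathfrak{m}_X/\mathfrak{m}_X^2$ is surjective, hence $A_P\to A_X$ is surjective, hence $R_X\to R_P$ is injective, hence faithfully flat; the categorical statement of the lemma is then exactly Lemma \ref{lem:exactness}(i). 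Your argument replaces this Hopf-algebraic detour by a direct verification of both halves of the criterion: the trace splitting $\pi_*\mathcal{O}_P\simeq\mathcal{O}_X\oplus\mathcal{N}$ with $\mathcal{N}$ finite and $\Gamma(\mathcal{N})=0$, combined with Proposition \ref{prop:n-nil}(2), kills the second summand of $\Hom_{\mathcal{O}_P}(\pi^*\mathcal{E},\pi^*\mathcal{F})$ and gives full faithfulness, and your induction on the length of the unipotent filtration of a subobject $\mathcal{F}\subset\pi^*\mathcal{E}$ descends it step by step. Both routes rest on the same key input (Proposition \ref{prop:n-nil}(2) applied to a finite bundle with no global sections tensored with a unipotent bundle); yours is more elementary and self-contained in that it avoids Nori's pro-nilpotent dual $A_X$, the cotangent-space surjectivity lemma, and the faithful-flatness criterion for Hopf algebras, at the cost of the descent bookkeeping in the inductive step, which you correctly flag and which does go through since subobjects in $\mathcal{C}^{\uni}$ are subbundles and pullback along the flat map $\pi$ commutes with images and preimages.
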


\begin{proof}
We will show that ${{\rm Res}}(\pi^*):\pi_1^{\uni}(P,p)\to\pi_1^{\uni}(X,x)$ is a faithfully flat morphism.
For this, we need to recall the dual of $\pi_1^{\uni}$~\cite[Chapter IV]{no82}. Let $R_X$ be the ring of functions of $\pi_1^{\uni}(X, x)$, namely $\pi_1^{\uni}(X, x)=\Spec R_X$, and put $A_X\overset{{\rm def}}{=}R_X^{\vee}=\Hom_{k}(R_X, k)$. Let $\mathfrak{m}_X$ be the kernel of the natural projection $A_X=R_X^{\vee}\twoheadrightarrow k$. Nori proved the following:
\begin{equation*}
A_X=\varprojlim_{n}A_X/\mathfrak{m}_X^{n};\quad\mathfrak{m}_X/\mathfrak{m}_X^2=H^1(X, \mathcal{O}_X)^{\vee}.
\end{equation*}
Now we claim that the induced $H^1(\mathcal{O}_X)\to H^1(\mathcal{O}_P)$. Indeed, let $\calE\in H^1(\calO_X)={\rm Ext}^1(\calO_X,\calO_X)$. Then we have:
\begin{equation*}
\begin{aligned}
\Hom_{\calO_P}(\pi^*\calE,\calO_P^{\oplus 2})&\simeq\Hom_{\calO_X}(\calE,\pi_*\calO_P)^{\oplus 2}\\
&\simeq\Hom_{\calO_X}(\calE,\calO_X)\otimes\Hom_{\calO_X}(\calO_X,\pi_*\calO_P)^{\oplus 2}\\
&\simeq\Hom_{\calO_X}(\calE,\calO_X)\otimes H^0(P,\calO_P)^{\oplus 2}\\
&\simeq\Hom_{\calO_X}(\calE,\calO_X^{\oplus 2}),
\end{aligned}
\end{equation*}
where the second isomorphism comes from Proposition \ref{prop:n-nil}~(2) and the last one comes from the assumption $H^0(P,\calO_P)=k$. This implies that $\calE$ is trivial if and only if $\pi^*\calE$ is trivial.
Thus the map $H^1(\mathcal{O}_X)\to H^1(\mathcal{O}_P)$ is injective and the map $\mathfrak{m}_P/\mathfrak{m}_P^2\to\mathfrak{m}_X/\mathfrak{m}_X^2$ is surjective, whence so is $A_P\to A_X$~\cite[Chapter IV, Lemma 10]{no82}. Thus, the homomorphism $R_X\to R_P$ must be injective, which implies that $R_P$ is faithfully flat over $R_X$~\cite[Section 14.1]{wa79}.
\end{proof}

\begin{rem}\label{rem:genus}
 Nori proved that if $X\in\Ob(\Cv)$ with $g=\Dim H^1(\calO_X)$, then the dual $A_X$ is isomorphic to $k\langle\langle x_1,\dots, x_g\rangle\rangle$.
\end{rem}

\begin{lem}\label{lem:etale-nil2}
Let $\pi:P\to X$ be a finite \'etale torsor with $H^{0}(P, \mathcal{O}_P)=k$ and let $\mathcal{F}$ be a unipotent bundle on $P$. Then $\pi_{*}\mathcal{F}$ is a semifinite bundle on $X$.
\end{lem}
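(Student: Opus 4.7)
The plan is to push a unipotent filtration of $\calF$ forward through $\pi_*$ and then invoke closure of $\calC^{EN}(X)$ under extensions. More precisely, I would choose a filtration
\[\calF = \calF^{(0)} \supset \calF^{(1)} \supset \cdots \supset \calF^{(n)} = 0\]
with $\calF^{(i)}/\calF^{(i+1)} \simeq \calO_P$ for each $i$, and apply $\pi_*$ termwise. Because $\pi$ is finite (in particular affine), the functor $\pi_*$ is exact on quasi-coherent sheaves, and because $\pi$ is flat (indeed \'etale) it sends vector bundles to vector bundles. Thus the pushed-forward sequence will produce a filtration of the vector bundle $\pi_*\calF$ by subbundles $\pi_*\calF^{(i)}$ whose successive quotients are all isomorphic to $\pi_*\calO_P$; the inclusions $\pi_*\calF^{(i+1)}\subset\pi_*\calF^{(i)}$ are automatically subbundle inclusions since the corresponding quotient is locally free.

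Next I would observe that $\pi_*\calO_P$ is itself a finite bundle on $X$, which puts each successive quotient of the above filtration in $\calC^{N}(X)\subset\calC^{EN}(X)$. This is exactly the computation recalled in Section 2.2: since $\pi$ is a torsor under a finite $k$-group scheme $G$, the isomorphism $G\times P\simeq P\times_X P$ gives $(\pi_*\calO_P)^{\oplus n_0}\simeq(\pi_*\calO_P)^{\otimes 2}$ with $n_0=\Dim\Gamma(\calO_G)$, so $\pi_*\calO_P$ is finite in the sense of Section 2.2. Once this is in hand, an induction on the length $n$, combined with the fact that $\calC^{EN}(X)$ is closed under extensions in $\Coh(X)$, will yield $\pi_*\calF\in\Ob(\calC^{EN}(X))$, as desired.

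I do not expect any serious obstacle: the whole argument rests on three ingredients already available, namely exactness of $\pi_*$ for a finite morphism, the finiteness of $\pi_*\calO_P$ coming from the torsor structure, and the extension-closure of $\calC^{EN}(X)$. In particular the hypothesis $H^0(P,\calO_P)=k$ does not seem to play a role in this lemma (it was used for Lemma \ref{lem:etale-nil1}, where faithful flatness of $\pi_1^{\uni}$ was needed); the lemma as stated would remain true without that assumption, but keeping it makes the pair (Lemma \ref{lem:etale-nil1}, Lemma \ref{lem:etale-nil2}) directly applicable to the pro-system $\Lambda$ of connected finite torsors used in the subsequent description of $\pi_1^{E}(X,x)$.
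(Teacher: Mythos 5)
Your proposal is correct and is essentially the paper's argument: the paper likewise peels off the unipotent filtration (inducting on $\rank\calF$ via the surjection $\calF\to\calO_P$), uses exactness of $\pi_*$ for the finite morphism $\pi$, the finiteness of $\pi_*\calO_P$ coming from the torsor structure, and the extension-closure of $\calC^{EN}(X)$. Your side remark that the hypothesis $H^0(P,\calO_P)=k$ is not actually used here is also accurate.
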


\begin{proof}
For the proof, we use induction on $\rank \mathcal{F}$. If $\rank\mathcal{F}=1$, then $\mathcal{F}=\mathcal{O}_P$. Thus $\pi_{*}\mathcal{F}=\pi_{*}\mathcal{O}_P$ is a finite bundle. If $\rank\mathcal{F}>1$, then there is an exact sequence
\begin{equation*}
0\to\mathcal{F}'\to\mathcal{F}\to\mathcal{O}_P\to 0.
\end{equation*}
Since $\pi$ is a finite morphism, the sequence
\begin{equation}\label{eq:lem 4.2-1}
0\to\pi_{*}\mathcal{F}'\to\pi_{*}\mathcal{F}\to\pi_{*}\mathcal{O}_P\to 0
\end{equation}
is also exact. By the induction hypothesis, $\pi_{*}\mathcal{F}'$ is semifinite and $\pi_{*}\mathcal{O}_P$ is finite. Thus the exactness of (\ref{eq:lem 4.2-1}) implies that $\pi_{*}\mathcal{F}$ is also semifinite.
\end{proof}

Let $S$ be a finitely generated full tensor subcategory of $\mathcal{C}^{N}(X)$. Then by Section 2.2, there is a finite $\pi(X,S,x)$-torsor $\pi_S:(X_S,x_S)\to (X,x)$. Recall that each $X_S$ is geometrically-connected and, since now $\pi(X,S,x)$ is \'etale, $X_S$ is geometrically-reduced. 
Now we construct the quotient category of $\mathcal{C}^{N}(X)\hookrightarrow\mathcal{C}^{EN}(X)$ in the similar way to  \cite[Definition 3.5]{ehs08}:

\begin{definition}
The category $\mathcal{C}^{E}(X, x)$ has for objects pairs $(X_S, \mathcal{E})$ where $S\subset\mathcal{C}^{N}(X)$ is a finitely generated full tensor subcategory, $\mathcal{E}\in\Ob(\mathcal{C}^{\uni}(X_S))$, and for morphisms 
\begin{equation*}
\Hom((X_S, \mathcal{E}), (X_T, \mathcal{F}))\overset{{\rm def}}{=}\varinjlim_{U\supset S\cup T}\Hom_{\calO_{X_U}}(\pi_{UT}^{*}\mathcal{E}, \pi_{US}^{*}\mathcal{F})
\end{equation*}
where $T\cup S$ is the full tensor subcategory of $\mathcal{C}^{N}(X)$ generated by $T$ and $S$. The composition rule is defined as follows. For any $\phi_{ij}\in\Hom((X_{S_i}, \mathcal{E}_{i}), (X_{S_j}, \mathcal{E}_{j}))$, $(i, j)=(1, 2), (2, 3)$, we put 
\begin{equation*}
\phi_{2, 3}\circ\phi_{1, 2}\overset{{\rm def}}{=}\pi_{S_1\cup S_2\cup S_3, S_2\cup S_3}^{*}\phi_{23}\circ\pi_{S_1\cup S_2\cup S_3, S_1\cup S_2}^{*}\phi_{12}\in \Hom((X_{S_1}, \mathcal{E}_1), (X_{S_3}, \mathcal{E}_3)).
\end{equation*} 
\end{definition}

\begin{prop}\label{prop:quotient category}
The category $\mathcal{C}^{E}(X, x)$ has a natural structure of a neutral Tannakian category over $k$.
\end{prop}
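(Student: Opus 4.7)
The plan is to verify the Tannakian axioms directly, viewing $\calC^E(X,x)$ as a $2$-colimit of the neutral Tannakian categories $\calC^{\uni}(X_S)$ taken along the filtered system of pullback tensor functors $\pi_{TS}^*:\calC^{\uni}(X_S)\to\calC^{\uni}(X_T)$ for $T\supset S$.

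First I would check that the morphism sets are well-defined. Each torsor $\pi_{TS}:X_T\to X_S$ is finite \'etale and satisfies $H^0(X_T,\calO_{X_T})=k$ by the geometric connectedness recorded in~(\ref{eq:geom-conn}), so Lemma~\ref{lem:etale-nil1} (applied with $X_S$ in place of $X$) implies that $\pi_{TS}^*$ is a fully faithful tensor functor. Hence the filtered colimit defining $\Hom((X_S,\calE),(X_T,\calF))$ is a colimit of injections of $k$-vector spaces, yielding a well-defined $k$-vector space with an associative composition law.

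Next I would equip $\calC^E(X,x)$ with a rigid tensor structure by setting $(X_S,\calE)\otimes(X_T,\calF):=(X_{S\cup T},\pi_{S\cup T,S}^*\calE\otimes\pi_{S\cup T,T}^*\calF)$, with unit $\mathbf{1}:=(X,\calO_X)$ (coming from the trivial subcategory) and dual $(X_S,\calE)^\vee:=(X_S,\calE^\vee)$. All tensor and rigidity axioms are inherited componentwise from each $\calC^{\uni}(X_U)$. For the abelian structure, a morphism $\phi:(X_S,\calE)\to(X_T,\calF)$ represented by some $\widetilde{\phi}:\pi_{US}^*\calE\to\pi_{UT}^*\calF$ in $\calC^{\uni}(X_U)$ admits $\Ker\phi:=(X_U,\Ker\widetilde{\phi})$ and $\Cok\phi:=(X_U,\Cok\widetilde{\phi})$, computed inside the abelian category $\calC^{\uni}(X_U)$; the exactness and full faithfulness of pullback along further refinements ensure independence of the choice of $U$.

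Finally I would define the fiber functor $\omega:\calC^E(X,x)\to\Vecf$ by $\omega(X_S,\calE):=x_S^*\calE$. Well-definedness on morphisms uses the compatibility $\pi_{US}(x_U)=x_S$, which gives $x_U^*\pi_{US}^*=x_S^*$ and so makes $\omega$ independent of the representative $U$. The functor is $k$-linear, exact and tensor-compatible by construction; faithfulness follows since a nonzero morphism is represented by a nonzero element of some $\Hom_{\calO_{X_U}}(\pi_{US}^*\calE,\pi_{UT}^*\calF)$, and the canonical fiber functor $x_U^*$ of $\calC^{\uni}(X_U)$ is already faithful. Lastly $\End(\mathbf{1})=\varinjlim_S\Gamma(X_S,\calO_{X_S})=k$ by the geometric connectedness of each $X_S$. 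The main technical obstacle is the colimit bookkeeping---showing that every categorical construction descends from/ascends to common representatives $X_U$ compatibly---but this becomes routine once Lemma~\ref{lem:etale-nil1} is in hand, guaranteeing that all transition functors $\pi_{TS}^*$ are fully faithful exact tensor functors.
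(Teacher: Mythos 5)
Your proof is correct and takes essentially the same route as the paper: the paper packages the identical $2$-colimit verification into the general Proposition \ref{prop:appendix2} of the Appendix (filtered colimits of neutral Tannakian categories along fully faithful exact tensor functors), with the full faithfulness of the transition functors $\pi_{TS}^{*}$ supplied, exactly as in your argument, by Lemma \ref{lem:etale-nil1}. The only difference is cosmetic--you verify the axioms inline, while the paper quotes the abstract appendix statement and records the resulting additive, tensor, and fiber-functor structures.
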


This proposition is a formal consequence of Proposition \ref{prop:appendix2} in Appendix. Here we only provide a description of the additive and tensor structures and a neutral fiber functor. For details, see Proposition \ref{prop:appendix2} in Appendix:
\begin{equation*}
\begin{aligned}
&\text{(additive structure)}& (X_S, \mathcal{E})\oplus(X_T, \mathcal{F})&\overset{{\rm def}}{=}(X_{S\cup T}, \pi_{S\cup T, S}^{*}\mathcal{E}\oplus\pi_{S\cup T, T}^{*}\mathcal{F});\\
&\text{(tensor structure)}& (X_S, \mathcal{E})\otimes(X_T, \mathcal{F})&\overset{{\rm def}}{=}(X_{S\cup T}, \pi_{S\cup T, S}^{*}\mathcal{E}\otimes\pi_{S\cup T, T}^{*}\mathcal{F});\\
&\text{(unit object)}& \mathbb{I}&\overset{{\rm def}}{=}(X, \mathcal{O}_X);\\
&\text{(neutral fiber functor)}&
\omega_{E}:\mathcal{C}^{E}(X, x)&\to\Vecf;\,(X_S, \mathcal{E})\mapsto x_S^{*}\mathcal{E}.
\end{aligned}
\end{equation*}
For a finitely generated full tensor subcategory $S\subset\mathcal{C}^{EN}(X)$, we put $S^{N}\overset{{\rm def}}{=}S\cap\mathcal{C}^{N}(X)$. This is a finitely generated full tensor subcategory of $\mathcal{C}^{N}(X)$. We define a functor $q:\mathcal{C}^{EN}(X)\to\mathcal{C}^{E}(X, x)$ by
\begin{equation}\label{eq:functor q}
q(\mathcal{E})\overset{{\rm def}}{=}(X_{\langle\mathcal{E}\rangle_\otimes^{N}}, \pi_{\langle\mathcal{E}\rangle_\otimes^{N}}^{*}\mathcal{E}).
\end{equation}
Then $q$ is a tensor functor compatible with the fiber functors $\omega_x$ and $\omega_{E}$.

\begin{thm}\label{thm:main2}
The category $\mathcal{C}^{E}(X, x)$ with the functor $q$ is the quotient category of $\mathcal{C}^{N}(X)\hookrightarrow\mathcal{C}^{EN}(X)$ with respect to the neutral fiber functor $\omega_x$ of $\mathcal{C}^{N}(X)$. The Tannakian fundamental group of $\mathcal{C}^{E}(X, x)$ with fiber funtcor $\omega_{E}$ is isomorphic to $\pi_{1}^{E}(X, x)$~(\ref{eq:def pi_1^E}).
\end{thm}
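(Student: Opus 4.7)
The plan is to verify three assertions in sequence: (a) the functor $q$ in (\ref{eq:functor q}) is well-defined, additive, tensor and compatible with the fiber functors; (b) the pair $(q,\mathcal{C}^{E}(X,x))$ exhibits $\mathcal{C}^{E}(X,x)$ as the Tannakian quotient of $\mathcal{C}^{EN}(X)$ by the full subcategory $\mathcal{C}^{N}(X)$ with respect to $\omega_x$; (c) the Tannakian group of $\mathcal{C}^{E}(X,x)$ is identified with $\pi_1^{E}(X,x)$.

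For (a), let $\calE\in\Ob(\mathcal{C}^{EN}(X))$ with a filtration whose successive quotients $\mathcal{F}_i$ are finite indecomposable. All $\mathcal{F}_i$ lie in $\langle\calE\rangle_{\otimes}^{N}$, so by the equivalence (\ref{eq:section 2 equiv}) applied on the torsor $\pi_{\langle\calE\rangle_{\otimes}^{N}}$, each pullback $\pi_{\langle\calE\rangle_{\otimes}^{N}}^{*}\mathcal{F}_i$ is trivial. Therefore $\pi_{\langle\calE\rangle_{\otimes}^{N}}^{*}\calE$ admits a filtration by trivial subquotients, hence is unipotent, and $q(\calE)\in\Ob(\mathcal{C}^{E}(X,x))$. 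Compatibility with direct sums, tensor products, duals and with the fiber functor follows from the functoriality of pullback and of $S\mapsto X_S$. In particular every $\calF\in\Ob(\mathcal{C}^{N}(X))$ is sent by $q$ to an object canonically isomorphic to $(X,\calO_X^{\oplus\rank \calF})$, so $q$ annihilates $\mathcal{C}^{N}(X)$.

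For (b), one invokes the abstract quotient construction (Proposition \ref{prop:appendix2} of the Appendix), whose universal property reduces to the following two statements. First, at the level of morphisms, $\pi_S^{*}$ is fully faithful on unipotent bundles by Lemma \ref{lem:etale-nil1}, so the direct-limit description of $\Hom$ in $\mathcal{C}^{E}(X,x)$ matches exactly the description given by the abstract quotient. Second, for essential surjectivity, take $(X_S,\mathcal{F})\in\Ob(\mathcal{C}^{E}(X,x))$ and set $\calE\overset{{\rm def}}{=}\pi_{S*}\mathcal{F}$, which is semifinite on $X$ by Lemma \ref{lem:etale-nil2}. By flat base change for the étale $G$-torsor $\pi_S$ with $G=\pi(X,S,x)$, one has $\pi_S^{*}\calE\cong\bigoplus_{g\in G}g^{*}\mathcal{F}$, which is again unipotent on $X_S$. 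Hence $q(\calE)$ may be represented by $(X_S,\pi_S^{*}\calE)$, and $(X_S,\mathcal{F})$ appears as the direct summand indexed by the identity; since $\mathcal{C}^{E}(X,x)$ is abelian, this suffices.

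For (c), one applies the correspondence of \cite[Proposition 2.21]{dm82} between closed normal subgroup schemes and Tannakian quotients: (b) asserts that $\mathcal{C}^{N}(X)\hookrightarrow\mathcal{C}^{EN}(X)\stackrel{q}{\twoheadrightarrow}\mathcal{C}^{E}(X,x)$ is an exact sequence of Tannakian categories, whose dual is precisely (\ref{eq:main ex seq}); hence the Tannakian group of $\mathcal{C}^{E}(X,x)$ is $\Ker(p^{N})=\pi_1^{E}(X,x)$ by definition (\ref{eq:def pi_1^E}). The main obstacle lies in the essential surjectivity of step (b): one must show that every pair $(X_S,\mathcal{F})$ actually comes from a semifinite bundle on $X$, and the key technical ingredients are Lemma \ref{lem:etale-nil2} (pushforward of unipotent is semifinite) together with the étale base-change decomposition $\pi_S^{*}\pi_{S*}\mathcal{F}\cong\bigoplus_{g\in G}g^{*}\mathcal{F}$, which makes $\mathcal{F}$ visibly a direct summand.
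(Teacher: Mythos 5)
There is a genuine gap in step (c), and it is located exactly where the paper's proof does its real work. What you have actually established is: (i) $q$ kills $\mathcal{C}^{N}(X)$, so the induced map $q^{*}:\pi_1(\mathcal{C}^{E}(X,x),\omega_E)\to\pi_1^{EN}(X,x)$ factors through $\Ker(p^{N})=\pi_1^{E}(X,x)$; and (ii) every object $(X_S,\mathcal{F})$ of $\mathcal{C}^{E}(X,x)$ is a direct summand (indeed a quotient) of $q(\pi_{S*}\mathcal{F})$, which by Lemma \ref{lem:exactness}(ii) makes $q^{*}$ a closed immersion. But a closed immersion into $\Ker(p^{N})$ is not yet an isomorphism onto $\Ker(p^{N})$, and neither \cite[Proposition 2.21]{dm82} nor Proposition \ref{prop:appendix2} supplies the missing inclusion: Proposition \ref{prop:appendix2} only constructs $\mathcal{C}^{E}(X,x)$ as an inductive limit of Tannakian categories and identifies its group with $\varprojlim_S\pi_1^{\uni}(X_S,x_S)$; it carries no universal property that would reduce ``being the quotient of $\mathcal{C}^{EN}(X)$ by $\mathcal{C}^{N}(X)$'' to full faithfulness of $\pi_S^{*}$ plus essential surjectivity of $q$. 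Exactness of $\pi_1(\mathcal{C}^{E})\to\pi_1^{EN}\to\pi_1^{N}$ at the middle term requires the full criterion of Lemma \ref{lem:exactness}(iii), i.e.\ conditions (a), (b), (c). Your argument gives (c); condition (a) follows from the equivalence (\ref{eq:section 2 equiv}) but should be stated; condition (b) is entirely absent from your proposal.

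Condition (b) is the substantive point: for $\mathcal{E}\in\mathcal{C}^{EN}(X)$ with $S=\langle\mathcal{E}\rangle_{\otimes}^{N}$, the maximal trivial subobject $\mathcal{F}_0$ of $\pi_S^{*}\mathcal{E}$ in $\mathcal{C}^{\uni}(X_S)$ must be shown to descend to a subobject $\mathcal{E}_0\subset\mathcal{E}$ with $\pi_S^{*}\mathcal{E}_0\simeq\mathcal{F}_0$. The paper does this by the projection formula: $H^{0}(X_S,\pi_S^{*}\mathcal{E})\simeq\Hom_{\calO_X}(({\pi_S}_{*}\mathcal{O}_{X_S})^{\vee},\mathcal{E})=\bigoplus_{i=1}^{r}k.\phi_i$, taking $\mathcal{E}_0$ to be the image of $\oplus_i\phi_i$ (a finite bundle in $S$, hence with trivial pullback), and then checking $\rank\pi_S^{*}\mathcal{E}_0=r$ by running the projection formula again. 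Without this step you cannot conclude that $q^{*}$ surjects onto $\Ker(p^{N})$, so the identification $\pi_1(\mathcal{C}^{E}(X,x),\omega_E)\simeq\pi_1^{E}(X,x)$ is not proved. A minor additional caveat: your decomposition $\pi_S^{*}\pi_{S*}\mathcal{F}\cong\bigoplus_{g\in G}g^{*}\mathcal{F}$ presumes the finite \'etale group scheme $G=\pi(X,S,x)$ is constant, which need not hold over a non-algebraically-closed field of characteristic $0$; the paper avoids this by using only the surjectivity of the adjunction $\pi_S^{*}\pi_{S*}\mathcal{F}\twoheadrightarrow\mathcal{F}$, which suffices for Lemma \ref{lem:exactness}(ii) and (iii)(c).
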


Let $\pi_1(\mathcal{C}^{E}(X,x),\omega_E)$ be the Tannakian fundamental group of $\mathcal{C}^{E}(X, x)$ with respect to the fiber functor $\omega_{E}$. The tensor functor $q$ induces a morphism of affine group schemes $q^{*}:\pi_1(\mathcal{C}^{E}(X,x),\omega_E)\to\pi_1^{EN}(X, x)$. If $\mathcal{E}$ is finite, then by definition of $q$ (\ref{eq:functor q}), we find that $q(\mathcal{E})$ is trivial in $\mathcal{C}^{E}(X, x)$~(cf.~Section 2.2 (2.6)). This implies that $q^{*}$ factors through $\pi_1^{E}(X, x)$ and we obtain the morphism of group schemes
\begin{equation}\label{eq:hom into pi_1^E}
\pi_1(\mathcal{C}^{E}(X,x),\omega_E)\to\pi_1^{E}(X, x)
\end{equation}
such that the following diagram commutes:
\begin{equation}\label{eq:comm diag}
\begin{CD}
0@>>>\pi_1(\mathcal{C}^{E}(X,x),\omega_E)@>q^{*}>>\pi_1^{EN}(X,x)@>p^{N}>>\pi_1^{N}(X,x)@>>>0\\
@. @V(\ref{eq:hom into pi_1^E})VV @| @| @.\\
0@>>>\pi_1^{E}(X,x)@>>>\pi_1^{EN}(X,x)@>p^{N}>>\pi_1^{N}(X,x)@>>>0.
\end{CD}
\end{equation} 
By definition of $\pi_1^{E}(X, x)$, the bottom row in (\ref{eq:comm diag}) is exact. It suffices to show that the top row in (\ref{eq:comm diag}) is exact. We need the following criterion of the exactness of a sequence of Tannakian fundamental groups~\cite[Appendix A, Theorem A.1]{ehs08}:

\begin{lem}\label{lem:exactness}
Let $L\stackrel{q}{\to}G\stackrel{p}{\to}A$ be a sequence of homomorphisms of affine group schemes over a field $k$. It induces a sequence of functors $\Rep(A)\stackrel{p^{*}}{\to}\Rep(G)\stackrel{q^{*}}{\to}\Rep(L)$. 

(i) The map $p$ is faithfully flat if and only if $p^{*}$ is fully faithful and $\Rep(A)$ is closed under taking subobjects in $\Rep(G)$. 

(ii) The map $q$ is a closed immersion if and only if any object of $\Rep(L)$ is isomorphic to a subquotient of $q^{*}(V)$ for some $V\in\Rep(G)$. 

(iii) Assume that $q$ is a closed immersion and that $p$ is faithfully flat. Then the sequence $L\stackrel{q}{\to}G\stackrel{p}{\to}A$ is exact if and only if the following conditions are fulfilled:

(a) For any object $V\in\Rep(G)$, $q^{*}(V)$ is trivial in $\Rep(L)$ if and only if $V\simeq p^{*}(U)$ for some $U\in\Rep(A)$. 

(b) Let $W_0$ be the maximal trivial subobject of $q^{*}(V)$ in $\Rep(L)$. Then there exists $V_0\subset V$ in $\Rep(G)$, such that $q^{*}(V_0)\simeq W_0$. 

(c) Any $W$ in $\Rep(L)$ is a quotient of $q^{*}(V)$ for some $V\in\Rep(G)$.
\end{lem}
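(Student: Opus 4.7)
The plan is to work entirely within Tannaka duality, translating statements about affine group schemes into statements about their Hopf algebras and comodule categories. All three parts reduce to a dictionary between properties of a morphism of Hopf algebras $\phi\colon k[H']\to k[H]$ and properties of the induced restriction functor on comodules.

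For part (i), I would prove both directions together using the standard fact that $p\colon G\to A$ is faithfully flat iff the corresponding Hopf algebra map $k[A]\to k[G]$ is faithfully flat. The forward direction then says that a comodule for the smaller algebra is recovered from the pulled-back comodule for the larger one by $k[A]\otimes_{k[A]}(-)$; this gives full faithfulness of $p^{*}$ and faithfully flat descent shows that any $G$-subobject of $p^{*}(V)$ descends to an $A$-subobject, i.e.\ closure under subobjects. For the converse, I would invoke the Deligne--Milne characterization: a $k$-linear abelian tensor subcategory of $\Rep(G)$ that is fully embedded, stable under subobjects, and contains a fiber functor is precisely $\Rep(A)$ for a faithfully flat quotient $G\twoheadrightarrow A$.

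For part (ii), the content is that $q$ being a closed immersion is equivalent to $k[G]\to k[L]$ being surjective. One direction is clear: if $k[G]\twoheadrightarrow k[L]$, every finite-dimensional $L$-comodule is a subcomodule of $k[L]^{\oplus n}$, which is a quotient of $k[G]^{\oplus n}$, hence a subquotient of $q^{*}(V)$ for some $V$. For the converse, note that $k[L]$ as a left $L$-comodule is the union of its finite-dimensional subcomodules; by hypothesis each such is a subquotient of $q^{*}(V)$, from which one deduces that the image of $\phi\colon k[G]\to k[L]$ is a subcoalgebra that generates $k[L]$, and an elementary argument with counits and antipodes shows it must equal $k[L]$.

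Part (iii) is the substantive step and will be the main obstacle. Assuming $q$ is a closed immersion and $p$ is faithfully flat, exactness is equivalent to $L=\Ker(p)$, equivalently $k[A]=k[G]^{L}$ (the coinvariants for the right translation action of $L$). The easy direction is the implication ``exact $\Rightarrow$ (a), (b), (c)'': (a) says $L$ acts trivially on $V$ iff the $G$-action factors through $G/L=A$, which is immediate; (b) says the maximal trivial $L$-subobject of $V$ equals $V^{L}$, which is automatically $G$-stable because $L$ is normal; (c) follows from part (ii). For the converse, I would argue as follows: using (a), identify the subcategory of $\Rep(G)$ consisting of representations on which the composition $L\to G\to G$ acts trivially with $p^{*}\Rep(A)$; using (b) and a colimit argument, show that the regular representation $k[G]$, viewed as a $G\times L$-bimodule, has its $L$-coinvariant part identified with $k[A]$; then conclude $L=\Ker(p)$ by Tannaka duality. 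The delicate point is condition (b): without it one could only recover $k[A]$ as a subquotient rather than as an explicit subcomodule of $k[G]^{L}$, which is why (b) is needed to pin down the exactness and not merely the inclusion $L\subseteq\Ker(p)$.
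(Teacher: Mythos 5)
This lemma is not proved in the paper at all: it is quoted verbatim from Esnault--Hai--Sun~\cite[Appendix A, Theorem A.1]{ehs08}, with (i) and (ii) being \cite[Proposition 2.21]{dm82}, so your proposal has to be measured against that reference rather than against anything in the present text. Your treatment of (i) and (ii) follows the standard Hopf-algebraic dictionary and is essentially correct; for the converse in (ii) the clean finish is via coefficient spaces (the coefficient space of any subquotient of $q^{*}(V)$ lies in the image of $k[G]\to k[L]$, and $k[L]$ is the union of the coefficient spaces of its finite-dimensional comodules), rather than an argument with counits and antipodes.

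In (iii) there are two genuine gaps, both exposed by the single example $L=B$ a Borel subgroup of $G=\mathrm{SL}_2$ with $A=1$. First, ``(c) follows from part (ii)'' is not right: (ii) only yields $W$ as a \emph{subquotient} of some $q^{*}(V)$, while (c) demands a \emph{quotient}, and this upgrade is false for a general closed subgroup. Indeed, a character of $B$ of dominant weight is a quotient of no $V|_{B}$, since its dual would have to be spanned by a $B$-eigenvector of antidominant weight, whereas the unipotent radical forces every $B$-eigenvector in a restricted $\mathrm{SL}_2$-representation to be a highest-weight vector of dominant weight. So the implication ``exact $\Rightarrow$ (c)'' genuinely uses normality of $L=\Ker(p)$ (e.g.\ via the $L$-torsor structure of $G\to A$, which makes $k[G]$ an injective cogenerator of the category of $L$-comodules, so that every $W^{\vee}$ embeds $L$-equivariantly into a finite-dimensional $G$-subcomodule of $k[G]^{\oplus n}$ and one dualizes). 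Second, your plan for the converse --- prove $k[G]^{L}=k[A]$ and ``conclude $L=\Ker(p)$ by Tannaka duality'' --- cannot work as written, because a closed subgroup is not determined by its invariants in the regular representation: in the example one has $k[G]^{B}=k=k[G]^{G}$ and, moreover, conditions (a) and (b) both hold (for every $V\in\Rep(\mathrm{SL}_2)$ one has $V^{B}=V^{\mathrm{SL}_2}$, a $G$-subrepresentation), yet $B\to\mathrm{SL}_2\to 1$ is not exact. What (a) and (b) give you is exactly $V^{L}=V^{\Ker(p)}$ for all $V$, which only shows $L\subseteq\Ker(p)$ plus equality of invariants; the decisive input for the reverse inclusion is (c), which one must use to show that $\Rep(L)$ is generated under quotients and subobjects by $q^{*}\Rep(G)$, whence the restriction $\Rep(\Ker(p))\to\Rep(L)$ is an equivalence and $L=\Ker(p)$. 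Your sketch singles out (b) as the delicate point, but the counterexample shows the real weight is carried by (c).
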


\begin{proof}[Proof of Theorem \ref{thm:main2}](cf.~\cite[Theorem 3.8]{ehs08})
We have already seen that $\mathcal{C}^{N}(X)$ is a full subcategory of $\mathcal{C}^{EN}(X)$ which is closed under taking subobjects (Remark \ref{rem:rem of semi-finite2}). Thus by Lemma \ref{lem:exactness}(i), $p^{N}$ is faithfully flat. 
We show that $q^{*}$ is a closed immersion. Take any object $(X_S, \mathcal{E})$ in $\mathcal{C}^{E}(X, x)$. By Lemma \ref{lem:etale-nil2}, ${\pi_S}_*\calE$ belongs to $\mathcal{C}^{EN}(X)$. In fact, more precisely, ${\pi_S}_*\calE$ belongs to $\overline{S}$~(cf. Remark \ref{rem:rem of semi-finite1}; See also Remark \ref{rem:rem of main2-1}). We claim $(X_S,\mathcal{E})$ is a quotient of $q({\pi_S}_*\calE)=(X_S,\pi_S^{*}{\pi_S}_*\calE)$. This follows from the surjectivity of the adjunction map 
$\pi_{S}^{*}{\pi_{S}}_{*}\mathcal{E}\twoheadrightarrow\mathcal{E}$, which is valid because $\pi_S$ is finite \'etale. 
Thus, by Lemma \ref{lem:exactness}(ii), $q^{*}$ is a closed immersion. 

Now we will show that the sequence $\pi_1(\mathcal{C}^{E}(X,x),\omega_E)\stackrel{q^{*}}{\to}\pi_1^{EN}(X, x)\stackrel{p^{N}}{\to}\pi_1^{N}(X, x)$ is exact. We have to prove that it satisfies the conditions (a), (b) and (c) in Lemma \ref{lem:exactness}(iii). We have already showed that the condition (c) is fulfilled and the equivalence (\ref{eq:section 2 equiv})
 in Section 2.2 implies that the condition (a) is satisfied. It remains to be proven that the condition (b) is fulfilled. 
Let $\mathcal{E}$ be an object in $\mathcal{C}^{EN}(X)$. We put $S\overset{{\rm def}}{=}\langle\mathcal{E}\rangle_{\otimes}^{N}\subset\mathcal{C}^{N}(X)$. Let $\mathcal{F}_0$ be the maximal trivial subobject of $\pi_S^{*}\mathcal{E}$ in $\mathcal{C}^{\uni}(X_S)$. Let $r\overset{{\rm def}}{=}\rank\mathcal{F}_0$. Note that $r=\Dim H^{0}(X_S, \pi_S^{*}\mathcal{E})$. The projection formula implies that 
\begin{equation*}
\begin{aligned}
H^{0}(X_S, \pi_S^{*}\mathcal{E})&=\Hom_{\calO_{X_S}}(\mathcal{O}_{X_S}, \pi_S^{*}\mathcal{E})=\Hom_{\calO_X}(\mathcal{O}_X, {\pi_S}_{*}\pi_S^{*}\mathcal{E})\\
&=\Hom_{\calO_X}(\mathcal{O}_X, {\pi_S}_{*}(\mathcal{O}_{X_S}\otimes
\pi_S^{*}\mathcal{E}))\simeq\Hom_{\calO_X}(\mathcal{O}_X, ({\pi_S}_{*}\mathcal{O}_{X_S})\otimes\mathcal{E})\\
&\simeq\Hom_{\calO_X}(({\pi_S}_{*}\mathcal{O}_{X_S})^{\vee}, \mathcal{E})=\oplus_{i=1}^{r}k. \phi_i,
\end{aligned}
\end{equation*}
where $\{\phi_i\}$ is a basis of $\Hom_{\calO_X}(({\pi_S}_{*}\mathcal{O}_{X_S})^{\vee}, \mathcal{E})$. Let $\mathcal{E}_0$ be the image of the morphism $\oplus_{i=1}^{r}\phi_i:\oplus_{i=1}^{r}({\pi_S}_{*}\mathcal{O}_{X_S})^{\vee}\to\mathcal{E}$. Since $({\pi_S}_{*}\mathcal{O}_{X_S})^{\vee}$ is finite, $\mathcal{E}_0$ is a finite bundle in $\langle\mathcal{E}\rangle_{\otimes}\cap\mathcal{C}^{N}(X)=S$ and $\pi_S^{*}\mathcal{E}_0$ is trivial. We must show that $\pi_S^{*}\mathcal{E}_0\simeq\mathcal{F}_0$. It suffices to prove that $\rank\pi_S^{*}\mathcal{E}_0=r$.  Again, by the projection formula, we have
\begin{equation*}
\begin{aligned}
k^{\oplus r}=H^0(X_S, \pi_S^{*}\mathcal{E})&=\Hom_{\calO_X}(({\pi_S}_{*}\mathcal{O}_{X_S})^{\vee}, \mathcal{E})=\Hom_{\calO_X}(({\pi_S}_{*}\mathcal{O}_{X_S})^{\vee}, \mathcal{E}_0)\\
&=\Hom_{\calO_X}(\mathcal{O}_X, {\pi_S}_{*}\mathcal{O}_{X_S}\otimes\mathcal{E}_0)\simeq\Hom_{\calO_X}(\mathcal{O}_X,{\pi_S}_{*}(\mathcal{O}_{X_S}\otimes\pi_S^{*}\mathcal{E}_0))\\
&\simeq\Hom_{\calO_{X_S}}(\mathcal{O}_{X_S},\pi_S^{*}\mathcal{E}_0)\simeq H^0(X_S, \pi_S^{*}\mathcal{E}_0).
\end{aligned}
\end{equation*}
Therefore, we have $\rank\pi_S^{*}\mathcal{E}_0=r$, which completes the proof. 
\end{proof}

\begin{rem}\label{rem:rem of main2-1}
The same proof of Theorem \ref{thm:main2} implies the following. Namely, for any finitely generated tensor subcategory $S\subset\calC^N(X)$, the sequence of functors
\begin{equation*}
S\hookrightarrow\overline{S}\xrightarrow{\pi_S^*}\calC^{\uni}(X_S)
\end{equation*}
induces the following exact sequence:
\begin{equation*}
1\to\pi_1^{\uni}(X_S,x_S)\to\pi(X,\overline{S},x)\to\pi(X,S,x)\to 1.
\end{equation*}
Here, $\pi(X,\overline{S},x)$ is the quotient of $\pi_1^{EN}(X,x)$ corresponding to the subcategory $\overline{S}\subset\calC^{EN}(X)$~(Remark \ref{rem:rem of semi-finite1}).
\end{rem}

\begin{cor}\label{cor:cor of main2}
Let $\Lambda=(X_S, x_S)$ be the  pro-system of finite torsors over $X$ associated with $x$. Then for each $S$, there exists a faithfully flat morphism $u_S:\pi_1^{E}(X,x)\to\pi_1^{\uni}(X_S, x_S)$. If $S=\langle\mathcal{O}_X\rangle$, we simply write $u$ for $u_{\langle\calO_X\rangle}$. Furthermore, there exists an isomorphism of group schemes 
\begin{equation*}
\pi_1^{E}(X, x)\simeq\varprojlim_{\Lambda}\pi_1^{\uni}(X_S, x_S).
\end{equation*}
\end{cor}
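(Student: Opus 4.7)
The plan is to combine the exact sequence in Remark \ref{rem:rem of main2-1} with the defining exact sequence (\ref{eq:main ex seq}) of $\pi_1^{E}(X,x)$, and then to pass to the limit over the cofiltered pro-system $\Lambda$.

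First I would fix a finitely generated full tensor subcategory $S\subset\mathcal{C}^{N}(X)$ and place the exact sequence
$$
1\to\pi_1^{\uni}(X_S,x_S)\to\pi(X,\overline{S},x)\to\pi(X,S,x)\to 1
$$
of Remark \ref{rem:rem of main2-1} below (\ref{eq:main ex seq}). The inclusions $\overline{S}\hookrightarrow\mathcal{C}^{EN}(X)$ and $S\hookrightarrow\mathcal{C}^{N}(X)$ induce, by Lemma \ref{lem:exactness}\,(i), faithfully flat quotient morphisms $\pi_1^{EN}(X,x)\twoheadrightarrow\pi(X,\overline{S},x)$ and $\pi_1^{N}(X,x)\twoheadrightarrow\pi(X,S,x)$, so the two exact sequences fit into a commutative diagram of affine $k$-group schemes. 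A diagram chase (equivalently, the snake lemma applied to this diagram, viewed in the abelian category of commutative affine $k$-group schemes after restricting to the subquotients concerned) then yields the desired faithfully flat morphism $u_S:\pi_1^{E}(X,x)\twoheadrightarrow\pi_1^{\uni}(X_S,x_S)$ on kernels.

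Next I would check that the $u_S$ are compatible with the transition morphisms in $\Lambda$. For $S\subset T$ the pullback $\pi_{TS}^{*}:\mathcal{C}^{\uni}(X_S)\to\mathcal{C}^{\uni}(X_T)$ induces by Tannaka duality a morphism $\pi_1^{\uni}(X_T,x_T)\to\pi_1^{\uni}(X_S,x_S)$, and the identity $u_S=(\text{transition})\circ u_T$ follows from the functoriality in $S$ of the commutative diagram above. Thus the $u_S$ assemble into a canonical morphism $u:\pi_1^{E}(X,x)\to\varprojlim_{\Lambda}\pi_1^{\uni}(X_S,x_S)$.

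Finally, to prove $u$ is an isomorphism, I would pass to the cofiltered inverse limit of the diagrams constructed above. By the Tannakian description, $\pi_1^{EN}(X,x)=\varprojlim_S\pi(X,\overline{S},x)$ and $\pi_1^{N}(X,x)=\varprojlim_S\pi(X,S,x)$; moreover all transition maps in both pro-systems are faithfully flat, so the Mittag-Leffler condition is satisfied and inverse limits commute with kernels in this setting. Taking $\varprojlim_S$ of the second row of the diagram therefore reproduces (\ref{eq:main ex seq}) with its first term replaced by $\varprojlim_{\Lambda}\pi_1^{\uni}(X_S,x_S)$, and a comparison of kernels shows that $u$ is an isomorphism. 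The main obstacle I anticipate is ensuring that the categories $\overline{S}$, as $S$ runs over finitely generated tensor subcategories of $\mathcal{C}^{N}(X)$, are cofinal among all finitely generated tensor subcategories of $\mathcal{C}^{EN}(X)$, so that $\pi_1^{EN}(X,x)=\varprojlim_S \pi(X,\overline{S},x)$ holds as stated; this should follow because any semifinite bundle is by definition built from finite bundles by successive extensions, so the finite bundles occurring in any finite list of semifinite bundles generate a finitely generated $S\subset\mathcal{C}^{N}(X)$ whose associated $\overline{S}$ contains the given list.
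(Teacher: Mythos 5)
Your overall strategy --- taking the inverse limit over $S$ of the exact sequences of Remark \ref{rem:rem of main2-1} and comparing the result with (\ref{eq:main ex seq}) --- is a viable alternative to the paper's one-line deduction from Theorem \ref{thm:main2}, Lemma \ref{lem:etale-nil1} and Proposition \ref{prop:appendix2}\,(4), and your cofinality observation (every semifinite bundle lies in $\overline{S}$ for the finitely generated $S\subset\mathcal{C}^{N}(X)$ generated by the finite bundles occurring in its defining filtration) correctly yields $\pi_1^{EN}(X,x)=\varprojlim_S\pi(X,\overline{S},x)$, after which the identification of kernels goes through. The genuine gap is in your first step, where you claim that a snake-lemma argument ``in the abelian category of commutative affine $k$-group schemes'' produces a \emph{faithfully flat} $u_S$. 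None of the groups involved is commutative ($\pi_1^{\uni}(X_S,x_S)$ is pro-unipotent, $\pi(X,S,x)$ is a finite, generally non-abelian, group scheme), so there is no abelian category in which to apply the snake lemma; and even the non-abelian diagram chase only produces the \emph{existence} of the induced map on kernels (because $\pi_1^{E}(X,x)$ dies in $\pi(X,S,x)$), not its surjectivity. For a morphism of short exact sequences of affine group schemes with faithfully flat middle and right vertical arrows $g_S$, $r_S$, the induced map on kernels need not be faithfully flat: the chase requires in addition that $\Ker(g_S)\to\Ker(r_S)$ be surjective, which you do not verify.

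Several repairs are available. One is Tannakian, and is essentially the paper's route: by Lemma \ref{lem:etale-nil1} and the construction of $\mathcal{C}^{E}(X,x)$, the functor $\mathcal{C}^{\uni}(X_S)\to\mathcal{C}^{E}(X,x)$, $\calE\mapsto(X_S,\calE)$, is fully faithful with essential image closed under taking subobjects, so Lemma \ref{lem:exactness}\,(i) gives the faithful flatness of $u_S$. A second is group-theoretic and uses characteristic $0$: the image $I$ of $\pi_1^{E}(X,x)$ in $\pi(X,\overline{S},x)$ is normal (since $g_S$ is surjective and $\pi_1^{E}(X,x)$ is normal in $\pi_1^{EN}(X,x)$), the quotient $\pi(X,\overline{S},x)/I$ is a quotient of $\pi_1^{N}(X,x)$ and hence pro-reductive, so the image in it of the unipotent normal subgroup $\pi_1^{\uni}(X_S,x_S)$ is trivial, forcing $\pi_1^{\uni}(X_S,x_S)\subseteq I$. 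A third is to establish your limit identification first (it needs only the existence and compatibility of the $u_S$, not their surjectivity) and then note that $u_S$ becomes the canonical projection $\varprojlim_{\Lambda}\pi_1^{\uni}(X_T,x_T)\to\pi_1^{\uni}(X_S,x_S)$, whose faithful flatness again reduces to Lemma \ref{lem:etale-nil1} via the injectivity of $k[\pi_1^{\uni}(X_S,x_S)]$ into the filtered colimit of Hopf algebras. With any of these inserted, your argument is complete.
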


\begin{proof}
It follows from Theorem \ref{thm:main2},  combined with Lemma \ref{lem:etale-nil1}~(cf.~Proposition \ref{prop:appendix2}~(4)):
\end{proof}

\begin{rem}\label{rem:rem of main2-2}
(i) Let $k$ be an algebraically closed field of characteristic 0 and $X$ an elliptic curve over $k$. In this case, Theorem \ref{thm:main1} and Corollary \ref{cor:cor of main2} imply that each multiplication map $n:X\to X$ induces an equivalence of categories of $\mathcal{C}^{\uni}(X)$ into itself. This fact have been already remarked by Atiyah~\cite[Some Aplications, Theorem 15(i)]{at57}.

(ii) If $X\in\Ob(\Cv)$ with $X$ smooth and $g\ge 2$, then $\pi_1^{E}(X, x)$ is strictly larger than $\pi_1^{\uni}(X, x)$, namely the projection $u:\pi_1^{E}(X,x)\twoheadrightarrow\pi_1^{\uni}(X,x)$  cannot be an isomorphism (cf.Corollary \ref{cor:cor of main2}) because the Riemann-Hurwitz formula implies that $g_{X_S}>g_X$ for $S\neq \langle\mathcal{O}_X\rangle$~(Remark \ref{rem:genus}). Furthermore, the extension (\ref{eq:main ex seq}) is non-trivial, i.e., $\pi_1^{EN}(X,x)\not\simeq\pi_1^{N}(X,x)\times\pi_1^{E}(X,x)$ because the unipotent part of $\mathcal{C}^{EN}(X)$ is exactly $\mathcal{C}^{\uni}(X)$. 
\end{rem}


\subsection{Action of $\pi_1^{N}$ on $\mathcal{C}^{E}$}

In this subsection, we give a proof of Theorem \ref{thm:int-main3}.
Here, we always assume that $k$ is an algebraically closed field of characteristic 0.
We first recall Hochschild and Mostow's theorem for pro-algebraic groups~\cite[Theorem 3.2 and Theorem 3.3]{hm69}:

\begin{thm}
(Hochschild and Mostow)
Take an exact sequence of affine group schemes over an algebraically closed field of characteristic 0
\begin{equation*}
0\to U\to G\to R\to 0
\end{equation*}
with $U$ unipotent and $R$ reductive. Then the projection $G\to R$ always has a section, up to unique conjugation by an element of $U$.  
\end{thm}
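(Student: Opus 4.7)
The plan is to reduce to the case of an algebraic (finite-type) group by writing $G$ as a suitable inverse limit, and then to split each finite-level extension by the vanishing of rational (Hochschild) cohomology. The key input is that, since $R$ is reductive and the characteristic is $0$, the category $\Rep(R)$ is semisimple, so $H^i(R,V)=0$ for every rational $R$-module $V$ and every $i\ge 1$. This single vanishing will govern both existence and uniqueness.

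Assume first that $G$ is of finite type. Filtering $U$ by its lower central series reduces the construction of a section to the abelian case, where $U$ is a vector group $\mathbb{G}_a^n$ carrying a rational action of $R$. The extension class of $1\to U\to G\to R\to 1$ then lies in $H^2(R,U)$, which vanishes, so the sequence splits. For uniqueness, any two sections $s,s'\colon R\to G$ differ by a $1$-cocycle $r\mapsto s(r)s'(r)^{-1}$ with values in $U$, and conjugation by $u\in U$ alters $s$ by the corresponding coboundary; hence the set of $U$-conjugacy classes of sections is a torsor over $H^1(R,U)$, which again vanishes. This yields both the existence and the asserted uniqueness up to conjugation by a (unique) element of $U$.

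For general pro-algebraic $G$, write $G=\varprojlim_\alpha G_\alpha$ as an inverse limit of algebraic quotients, arranged so that each $G_\alpha$ sits in an extension $1\to U_\alpha\to G_\alpha\to R_\alpha\to 1$ with $U_\alpha$ algebraic unipotent and $R_\alpha$ an algebraic reductive quotient of $R$. The finite-type case supplies a Levi section $s_\alpha$ of each $G_\alpha\to R_\alpha$. To assemble these into a compatible inverse system, proceed inductively: for $\beta\succeq\alpha$, the image of $s_\beta$ in $G_\alpha$ differs from $s_\alpha$ by conjugation by a unique element $u_\alpha\in U_\alpha$; lifting $u_\alpha$ along the surjection $U_\beta\twoheadrightarrow U_\alpha$ and replacing $s_\beta$ by its conjugate achieves compatibility, and the limit of the resulting system is the desired section.

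The hardest part is precisely this coherence step. Existence at each finite level is a standard cohomological computation, but the inverse-limit argument relies essentially both on the vanishing of $H^1(R,U)$, which lets one compare and adjust sections, and on the surjectivity $U_\beta\twoheadrightarrow U_\alpha$ that propagates corrections upward. Both features are underwritten by the characteristic-zero hypothesis: in characteristic $p$ the semisimplicity of $\Rep(R)$ fails, both $H^1$ and $H^2$ may be nonzero, and the whole strategy collapses.
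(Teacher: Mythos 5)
This theorem is not proved in the paper: it is quoted from Hochschild--Mostow~\cite[Theorems 3.2 and 3.3]{hm69}, so there is no internal argument to compare yours against. Your treatment of the algebraic (finite-type) case is the standard one and is correct: since $R$ is linearly reductive in characteristic $0$, the category $\Rep(R)$ is semisimple, so the rational (Hochschild) cohomology $H^i_0(R,V)$ vanishes for $i\ge 1$ and every rational $R$-module $V$; d\'evissage along the lower central series of $U$ then kills the obstruction class in $H^2$ and gives conjugacy of any two sections from the vanishing of $H^1$. One caveat on the word ``unique'': the conjugating element $u\in U$ is unique only modulo the centralizer $Z_U(s(R))$, which is nontrivial already for $G=U\times R$; what is actually unique is the $U$-conjugacy class of the section, which is all the paper ever uses (e.g.\ in the proof of Theorem \ref{thm:main3}).

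The genuine gap is the passage to the pro-algebraic case. Writing $G=\varprojlim_\alpha G_\alpha$ and splitting each $G_\alpha\to R_\alpha$ produces, for each $\alpha$, a nonempty set $T_\alpha$ of sections on which $U_\alpha(k)$ acts transitively, with surjective transition maps $T_\beta\to T_\alpha$ for $\beta\succeq\alpha$ (surjectivity follows, as you say, from conjugacy at level $\alpha$ plus surjectivity of $U_\beta\to U_\alpha$). Your ``proceed inductively'' then amounts to the assertion that $\varprojlim_\alpha T_\alpha\neq\emptyset$. This is automatic when the index set admits a countable cofinal chain, but the relevant index sets (e.g.\ finitely generated tensor subcategories in the Tannakian setting) are arbitrary directed posets, and a filtered inverse limit of nonempty sets with surjective transition maps over an uncountable directed poset can be empty. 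So the coherence step is not a routine induction: one must either exploit the extra structure of the $T_\alpha$ as homogeneous spaces under a compatible system of unipotent groups via a Zorn-type maximality argument, or argue as Hochschild--Mostow do, establishing the existence of a maximal pro-reductive subgroup of $G$ directly rather than by patching finite-level Levi sections. As written, your argument establishes the algebraic case and the conjugacy statement, but not the existence of a section for a general pro-algebraic $G$.
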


By applying this theorem to our setting (\ref{eq:main ex seq}), we have:
\begin{equation*}
\pi_1^{EN}(X, x)\simeq\pi_1^{N}(X, x)\ltimes\pi_1^{E}(X, x).
\end{equation*}
The map $\pi_1^{E}(X, x)\times\pi_1^{EN}(X, x)\to\pi_1^{E}(X, x); (n, g)\mapsto g^{-1}ng$ defines an (infinite dimensional) representation $\rho_{E}:\pi_1^{EN}(X, x)\to\Aut(k[\pi_1^{E}(X, x)])$. 
Fix a setion $t$ of the projection $p^{N}:\pi_1^{EN}(X, x)\to\pi_1^{N}(X, x)$. We denote by $\rho_{E}^{t}$ the image of $\rho_{E}$ by the induced tensor functor $t^{*}:{{\rm Ind}}(\Rep(\pi_1^{EN}(X, x)))\to{{\rm Ind}}(\Rep(\pi_1^{N}(X, x)))$, where ${{\rm Ind}}$ stands for  the \textit{ind-category}. Note that for an $k$-affine group scheme $G$, ${{\rm Ind}}(\Rep(G))$ is equivalent to the category of (not necessarily finite dimensional) left linear represenations of $G$ over $k$. 

\begin{thm}\label{thm:main3}
Under the above notation, if $X\in\Ob(\Cv)$ with $X$ smooth and $g\ge 2$, then the representation 
\begin{equation}\label{eq:main3}
\rho_{E}^{t}:\pi_1^{N}(X, x)\to\Aut(k[\pi_1^{E}(X, x)])
\end{equation}
is faithful.
\end{thm}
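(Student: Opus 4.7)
The plan is to test faithfulness of $\rho_E^t$ against sufficiently many finite quotients of $\pi_1^N(X,x)$ and, on each, to identify the induced action with a geometric action on cohomology to which Serre's rigidity theorem applies. First I would fix a non-trivial element $g \in \pi_1^N(X,x)$. Since $\pi_1^N(X,x)$ is a profinite group scheme (Remark~\ref{rem:semi-simple}), there exists a finitely generated full tensor subcategory $T \subseteq \mathcal{C}^N(X)$ such that the image of $g$ in $G_T \overset{{\rm def}}{=}\pi(X,T,x)$ is non-trivial. By Remark~\ref{rem:rem of main2-1}, the corresponding quotient $\pi(X, \overline{T}, x)$ of $\pi_1^{EN}(X,x)$ sits in the exact sequence
\begin{equation*}
1 \to \pi_1^{\uni}(X_T, x_T) \to \pi(X, \overline{T}, x) \to G_T \to 1.
\end{equation*}
Since $\pi(X,\overline{T},x)\to G_T$ factors through $\pi_1^N$, the section $t$ descends to a section of this finite extension (unique up to conjugation by an element of $\pi_1^{\uni}(X_T,x_T)$ by Hochschild--Mostow), and hence produces a $G_T$-action on $\pi_1^{\uni}(X_T, x_T)$ by conjugation. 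It therefore suffices to prove that this action is faithful whenever $T\neq\langle\calO_X\rangle$.

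Next I would pass to the abelianisation $\pi_1^{\uni}(X_T, x_T)^{\mathrm{ab}}$ in order to kill the residual inner-automorphism ambiguity coming from the choice of section. By Remark~\ref{rem:genus}, the Lie algebra of $\pi_1^{\uni}(X_T, x_T)$ is the free pro-nilpotent Lie algebra on $H^1(X_T, \calO_{X_T})^{\vee}$, so its abelianisation is the commutative unipotent group scheme with Lie algebra $H^1(X_T,\calO_{X_T})^{\vee}$. Through the Tannakian description of abelian unipotent bundles as classified by ${\rm Ext}^1(\calO_{X_T},\calO_{X_T})=H^1(X_T,\calO_{X_T})$, the $G_T$-action on this abelianisation is to be identified with the natural pullback action of $G_T$ on $H^1(X_T,\calO_{X_T})$ induced by deck transformations of $X_T$.

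The final step is Serre's classical rigidity theorem: for a smooth projective curve $Y$ of genus $\geq 2$ over an algebraically closed field of characteristic zero, the natural map $\Aut(Y)\to\Aut(H^1(Y,\mathbb{Z}/\ell))$ is injective for $\ell\geq 3$, so the action of $\Aut(Y)$ on $H^1(Y,\mathbb{C})=H^0(\Omega_Y)\oplus H^1(\calO_Y)$ is faithful. The summands $H^0(\Omega_Y)$ and $H^1(\calO_Y)$ are mutually dual as representations of the finite group $\Aut(Y)$, hence they share the same kernel and already the action on $H^1(Y,\calO_Y)$ is faithful. Since Riemann--Hurwitz forces $g_{X_T}\geq g_X\geq 2$ for every $X_T\to X$ in the pro-system $\Lambda$, Serre's theorem applies with $Y=X_T$ and $G_T\hookrightarrow\Aut(X_T)$, so the $G_T$-action on $\pi_1^{\uni}(X_T,x_T)^{\mathrm{ab}}$, hence on $\pi_1^{\uni}(X_T,x_T)$ itself, is faithful. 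This forces $g$ to act non-trivially via $\rho_E^t$, completing the argument.

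I expect the principal obstacle to be the middle step: matching the abstract Tannakian $G_T$-action on $\pi_1^{\uni}(X_T,x_T)^{\mathrm{ab}}$ (defined via conjugation in $\pi(X,\overline{T},x)$ through the section $t$) with the geometric pullback action on $H^1(X_T,\calO_{X_T})$ coming from deck transformations. This requires careful tracking of the base-point data implicitly encoded in $t$ through the equivalence between unipotent bundles and representations of $\pi_1^{\uni}$; a convenient route is to work directly with the $\pi_1^{\uni}(X_T,x_T)^{\mathrm{ab}}$-torsor on $X_T$ classifying the universal extension in $H^1(X_T,\calO_{X_T})$ and to verify its $G_T$-equivariance by hand. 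Once this compatibility is secured, Serre's rigidity closes the argument.
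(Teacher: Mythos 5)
Your proposal is correct in outline, and its first half (fixing a nontrivial element, detecting it in a finite quotient $G_T=\pi(X,T,x)$, and using Remark \ref{rem:rem of main2-1} together with Hochschild--Mostow to descend the section $t$ to a compatible section of $1\to\pi_1^{\uni}(X_T,x_T)\to\pi(X,\overline{T},x)\to G_T\to 1$) is essentially identical to the reduction carried out in the paper. Where you genuinely diverge is in proving that the resulting $G_T$-action on $\pi_1^{\uni}(X_T,x_T)$ is faithful. The paper argues by contradiction and never touches automorphisms of curves: if $\Ker(\rho_T)\neq 1$, it corresponds to an intermediate cover $X_T\to X_{T'}$ of degree $>1$ on which the triviality of the conjugation action forces the extension $1\to\pi_1^{\uni}(X_T,x_T)\to\pi(X_{T'},\overline{\langle\calE\rangle},x_{T'})\to\Ker(\rho_T)\to 1$ to be a direct product; comparing maximal unipotent quotients then yields $\pi_1^{\uni}(X_{T'},x_{T'})\simeq\pi_1^{\uni}(X_T,x_T)$, contradicting Remark \ref{rem:genus} because Riemann--Hurwitz makes the genera differ. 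You instead pass to the abelianization $H^1(X_T,\calO_{X_T})^{\vee}$, identify the conjugation action with the pullback action of the deck group, and invoke the classical rigidity of $\Aut(Y)\to\GL(H^1(Y,\calO_Y))$ for $g(Y)\ge 2$. Both routes are valid. Yours proves the a priori stronger statement that $G_T$ already acts faithfully on the abelianization, but it imports an external theorem and hinges on the compatibility lemma you rightly flag as the principal obstacle; that identification is standard (via $H^1(X_T,\calO_{X_T})\simeq{\rm Ext}^1_{\calO_X}(\calO_X,{\pi_T}_*\calO_{X_T})$, the regular representation $k[G_T]$, and Hochschild--Serre, using that $G_T$ is finite in characteristic $0$) but is real work that must be written out. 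The paper's argument buys self-containedness: it needs only Riemann--Hurwitz, the rank count of Remark \ref{rem:genus}, and the splitting of extensions with unipotent kernel and reductive quotient, and it sidesteps all base-point and action-comparison issues by staying entirely inside the Tannakian formalism. One small wording correction: it is not $\pi(X,\overline{T},x)\to G_T$ that ``factors through $\pi_1^N$,'' but rather the composite $\pi_1^{EN}(X,x)\to\pi(X,\overline{T},x)\to G_T$ that factors through $p^N$, as in diagram (\ref{eq:main3-2}).
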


\begin{proof}
We first reduce the theorem to the faithfulness of a sub-representation
\begin{equation}\label{eq:main3-1}
\rho_{S}:\pi(X, S, x)\to\Aut(k[\pi_1^{\uni}(X_S, x_S)])
\end{equation}
for each finitely generated full tensor category $S\subset\mathcal{C}^{N}(X)$. Indeed, by Remark \ref{rem:rem of main2-1}, we have the following commutative diagram of exact sequences
\begin{equation}\label{eq:main3-2}
\begin{CD}
0@>>>\pi_1^{E}(X,x)@>q^{*}>>\pi_1^{EN}(X,x)@>p^{N}>>\pi_1^{N}(X,x)@>>>0\\
@. @Vu_SVV @Vg_SVV @Vr_SVV @.\\
0@>>>\pi_1^{\uni}(X_S,x_S)@>>>\pi(X,\overline{S},x)@>p_S>>\pi(X,S,x)@>>>0.
\end{CD}
\end{equation} 
Note that all the vertical arrows in (\ref{eq:main3-2}) are faithfully flat morphisms. We show that $t\circ g_S$ factors through $\pi(X, S, x)$. Put $N_S\overset{{\rm def}}{=}\Ker(r_S)$. Fix a section $t'$ of $p_S$. Since $g_S$ is surjective, $g_S\circ t(\pi_1^{N}(X, x))$ is a reductive subgroup of $\pi(X, \overline{S}, x)$ (cf.~\cite[Section 3]{hm69}). Thus, by \cite[Theorem 3.3]{hm69}, there exists an element $u\in\pi_1^{\uni}(X_S, x_S)$ such that $u (g_S\circ t(\pi_1^{N}(X, x))) u^{-1}\subset t'(\pi(X, S, x))$, which implies that $g_S\circ t(N_S)=1$. Thus, there exists a unique section $t_S$ of $p_S$ such that $g_S\circ t=t_S\circ r_S$. Denote by $\rho_S$ a representation $\pi(X, S, x)\to\Aut(k[\pi_1^{\uni}(X_S, x_S)])$ induced by $t_S$. We find that the representation $\rho_{E}^{t}$ is an inductive limit of $\rho_S$ where $S$ runs over all finitely generated full tensor categories of $\mathcal{C}^{N}(X)$. Therefore, it suffices to show that each $\rho_S$ is a faithful representation. 
Let $S\subset\mathcal{C}^{N}(X)$ be a finitely generated full tensor subcategory. We will show that the representation $\rho_S:\pi(X, S, x)\to\Aut(k[\pi_1^{\uni}(X_S, x_S)])$ is faithful. Assume that $\rho_S$ is not faithful. Then there exists a full tensor subcategory $T\subsetneq S$ so that $\pi(X, T, x)\simeq\pi(X, S, x)/\Ker(\rho_S)$. Then $\pi_{ST}:X_S\to X_T$ is a torsor with finite structure group $G_{ST}=\Ker(\pi(X, S, x)\twoheadrightarrow\pi(X, T, x))$. Put $\mathcal{E}\overset{{\rm def}}{=}{\pi_{ST}}_{*}\mathcal{O}_{X_S}$, which is a finite bundle on $X_T$. We have $G_{ST}=\pi(X_T, \langle\mathcal{E}\rangle, x_T)$. We have the following exact sequence
\begin{equation}\label{eq:main3-3}
1\to\pi_1^{\uni}(X_S, x_S)\to\pi(X_T, \overline{\langle\mathcal{E}\rangle} ,x_T)\to\pi(X_T,  \langle\mathcal{E}\rangle, x_T)\to1.
\end{equation}
Since $\pi(X_T, \langle\mathcal{E}\rangle ,x_T)=\Ker(\rho_S)$, the extension (\ref{eq:main3-3}) is trivial and
\begin{equation*}
\pi(X_T, \overline{\langle\mathcal{E}\rangle} ,x_T)=\pi(X_T, \langle\mathcal{E}\rangle ,x_T)\times\pi_1^{\uni}(X_S, x_S).
\end{equation*}
By taking the maximal unipotent quotient of both sides, we find that there is an equivalence of categories $\mathcal{C}^{\uni}(X_T)\simeq\mathcal{C}^{\uni}(X_S)$. However, since $T\subsetneq S$, the degree of $\pi_{ST}:X_S\to X_T$ is strictly larger than one and the genus of $X_S$ is different from the one of $X_T$, which is a contradiction~(Remark \ref{rem:genus} and Remark \ref{rem:rem of main2-2}(ii)). Therefore, the kernel of $\rho_S$ is trivial. This completes the proof.
\end{proof}


\section{BASIC PROPERTIES of $\pi_1^{EN}$}


\subsection{The base change property and the K\"unneth formula}

In this subsection, we see that $\pi_1^{EN}$ satisfies the base-change property for algebraic extensions of a base field and
the K\"unneth formula. Let $k$ be a field of characteristic 0 and fix an algebraic closure $\overline{k}$ of $k$. Let $X\in\Ob(\Var)$ and put $X_{\overline{k}}\overset{{\rm def}}{=}X\times_k \overline{k}$ and $\overline{x}\overset{{\rm def}}{=}x\times_k \overline{k}$. We denote by $p$ the natural projection $X_{\overline{k}}\to X$. We first show that $\pi_1^{E}$ has a base change property:

\begin{lem}\label{lem:base change for pi1^E}
There exists an isomorphism of group schemes $\pi_1^{E}(X_{\overline{k}}, \overline{x})\stackrel{\simeq}{\to}\pi_1^{E}(X, x)\times_k \overline{k}$.
\end{lem}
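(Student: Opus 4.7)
The plan is to reduce Lemma~\ref{lem:base change for pi1^E} to Corollary~\ref{cor:cor of main2} together with the base change properties of $\pi_1^N$ and $\pi_1^{\uni}$, both of which were recalled at the start of Section~5.

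By Corollary~\ref{cor:cor of main2} one has
\begin{equation*}
\pi_1^E(X, x)\simeq\varprojlim_{S}\pi_1^{\uni}(X_S, x_S),\qquad \pi_1^E(X_{\overline{k}}, \overline{x})\simeq\varprojlim_{T}\pi_1^{\uni}(Y_T, y_T),
\end{equation*}
where $S$ runs over finitely generated full tensor subcategories of $\calC^N(X)$ and $(Y_T,y_T)\to(X_{\overline{k}},\overline{x})$ is the analogous pro-system over $X_{\overline{k}}$. The first step is to identify these two pro-systems. The base change property for $\pi_1^N$ gives $\pi_1^N(X_{\overline{k}}, \overline{x})\simeq \pi_1^N(X,x)\times_k\overline{k}$; writing both sides as inverse limits of their finite quotients and using that each $X_S$ is geometrically connected (see (\ref{eq:geom-conn})), so that $(X_S)_{\overline{k}}$ stays connected, one deduces that the base-changed pointed torsors $((X_S)_{\overline{k}}, (x_S)_{\overline{k}})$ form a cofinal subfamily of $(Y_T, y_T)_T$.

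The second step is to apply the base change property for $\pi_1^{\uni}$ to each $X_S$, giving
\begin{equation*}
\pi_1^{\uni}((X_S)_{\overline{k}}, (x_S)_{\overline{k}})\simeq\pi_1^{\uni}(X_S, x_S)\times_k\overline{k}.
\end{equation*}
Finally, since the base change functor $(-)\times_k\overline{k}$ on affine $k$-group schemes corresponds to the functor $(-)\otimes_k\overline{k}$ on commutative Hopf algebras, which is a left adjoint and hence preserves filtered colimits, it commutes with filtered inverse limits of affine group schemes:
\begin{equation*}
\bigl(\varprojlim_S\pi_1^{\uni}(X_S, x_S)\bigr)\times_k\overline{k}\simeq\varprojlim_S\bigl(\pi_1^{\uni}(X_S, x_S)\times_k\overline{k}\bigr).
\end{equation*}
Stringing together these three isomorphisms yields the desired identification $\pi_1^E(X_{\overline{k}}, \overline{x})\simeq \pi_1^E(X,x)\times_k\overline{k}$.

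The main obstacle I anticipate lies in the cofinality claim of the first step: one must verify that every pointed connected finite torsor over $(X_{\overline{k}}, \overline{x})$ is dominated by the base change of some pointed connected finite torsor over $(X, x)$. This is where one genuinely needs the base change for $\pi_1^N$, together with the general fact that finite quotients of a profinite $\overline{k}$-group scheme of the form $G\times_k\overline{k}$ are cofinally obtained by base-changing finite quotients of $G$. Once this cofinality is in hand, the remainder is a formal manipulation with inverse limits.
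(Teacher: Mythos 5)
Your proposal is correct and follows essentially the same route as the paper: express $\pi_1^E$ on both sides as the inverse limit from Corollary \ref{cor:cor of main2}, use the base change property of $\pi_1^N$ to identify the pro-system of pointed connected finite torsors over $X_{\overline{k}}$ with the base change of the one over $X$ (the geometric connectedness of each $X_S$ guaranteeing this), and then apply the base change property of $\pi_1^{\uni}$ termwise and commute $(-)\times_k\overline{k}$ with the filtered inverse limit. The cofinality point you flag as the main obstacle is exactly the content the paper extracts from \cite[Chapter II, Proposition 5]{no82}, so there is no gap.
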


\begin{proof}
Let $\Lambda=(X_S, x_S)_S$ be the pro-system of finite torsors over $X$ associated with the fiber functor $\omega_x:\mathcal{C}^{N}(X)\to\Vecf$. By the base change property of $\pi_1^{N}$\cite[Chapter II, Proposition 5]{no82}, we find that $\Lambda_{\overline{k}}=((X_S)_{\overline{k}},\overline{x_S})$ is the pro-system of \'etale torsors over $X_{\overline{k}}$ associated with $\omega_{\overline{x}}:\mathcal{C}^{N}(X_{\overline{k}})\to\rm{Vec}_{\overline{k}}$. Therefore, we have
\begin{equation*}
\pi_1^{E}(X_{\overline{k}},\overline{k})=\varprojlim_{\Lambda_{\overline{k}}}\pi_1^{\uni}((X_S)_{\overline{k}},\overline{x_S})\simeq\varprojlim_{\Lambda}\pi_1^{\uni}(X_S,x_S)\times_k \overline{k}=\pi_1^{E}(X,x)\times_k \overline{k},
\end{equation*}
where for the second isomorphism, we use the base change property of $\pi_1^{\uni}$\cite[Chapter IV, Proposition 9]{no82}.
\end{proof}

\begin{prop}\label{prop:base change for pi1^EN}
There exists an isomorphism of group schemes $\pi_1^{EN}(X_{\overline{k}}, \overline{x})\stackrel{\simeq}{\to}\pi_1^{EN}(X, x)\times_k \overline{k}$.
\end{prop}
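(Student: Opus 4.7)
The plan is to reduce the proposition to the five lemma applied to the defining short exact sequence
\begin{equation*}
1\to\pi_1^{E}(X,x)\to\pi_1^{EN}(X,x)\stackrel{p^N}{\to}\pi_1^{N}(X,x)\to 1
\end{equation*}
from (\ref{eq:main ex seq}), combined with its analogue over $\overline{k}$ and Lemma \ref{lem:base change for pi1^E} together with Nori's base change theorem for $\pi_1^{N}$~\cite[Chapter II, Proposition 5]{no82}.

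First I would base change the above sequence along $k\to\overline{k}$. Since $\overline{k}/k$ is faithfully flat, taking $\cdot\times_k\overline{k}$ preserves kernels and faithfully flat morphisms of affine group schemes, so the resulting sequence
\begin{equation*}
1\to\pi_1^{E}(X,x)\times_k\overline{k}\to\pi_1^{EN}(X,x)\times_k\overline{k}\to\pi_1^{N}(X,x)\times_k\overline{k}\to 1
\end{equation*}
is again exact. Next, the pullback functor $p^{*}:\calC^{EN}(X)\to\calC^{EN}(X_{\overline{k}})$ along $p:X_{\overline{k}}\to X$ is a tensor functor satisfying $\omega_{\overline{x}}\circ p^{*}=\omega_x\otimes_k\overline{k}$, so by Tannaka duality it induces a morphism of affine group schemes $\pi_1^{EN}(X_{\overline{k}},\overline{x})\to\pi_1^{EN}(X,x)\times_k\overline{k}$. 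The same construction applied to $\calC^{N}$ and to the description of $\pi_1^{E}$ as the inverse limit $\varprojlim_\Lambda\pi_1^{\uni}(X_S,x_S)$ in Corollary \ref{cor:cor of main2} produces the analogous comparison maps for $\pi_1^{N}$ and $\pi_1^{E}$, and these fit together into a ladder between the two exact sequences.

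The left and right vertical arrows of this ladder are isomorphisms by Lemma \ref{lem:base change for pi1^E} and by Nori's base change property of $\pi_1^{N}$, respectively. Applying the five lemma (for affine group schemes in the form given by the Tannakian dictionary) then forces the middle arrow $\pi_1^{EN}(X_{\overline{k}},\overline{x})\to\pi_1^{EN}(X,x)\times_k\overline{k}$ to be an isomorphism as well, which is the assertion.

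The only points that require any care are the exactness of the base-changed sequence and the commutativity of the ladder. Both are formal: kernels and faithful flatness commute with flat base change of affine group schemes, and commutativity follows from the functoriality of the Tannakian fundamental group construction applied to the tensor-compatible inclusions $\calC^{N}(X)\subset\calC^{EN}(X)$ together with the base change functors. I do not anticipate a genuine obstacle here, so the argument should go through almost mechanically once the ladder is set up.
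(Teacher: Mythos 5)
Your proof is correct, and its skeleton --- the ladder between the exact sequence (\ref{eq:main ex seq}) base-changed to $\overline{k}$ and its analogue over $\overline{k}$, with the outer vertical arrows handled by Lemma \ref{lem:base change for pi1^E} and by Nori's base change property for $\pi_1^{N}$ --- is precisely the second half of the proof in the paper. The difference lies in how the middle arrow is treated. The paper does not invoke the full five lemma: before the diagram chase it introduces the full subcategory $\mathcal{D}\subset\calC^{EN}(X_{\overline{k}})$ of objects embeddable into $p^{*}\calE$ for some $\calE\in\Ob(\calC^{EN}(X))$ and, using the universal torsor $P_{\overline{k}}\to X_{\overline{k}}$ together with Proposition \ref{prop:appendix1}, shows that $\mathcal{D}\simeq{\rm Rep}_{\overline{k}}(\pi_1^{EN}(X,x)\times_k\overline{k})$ and is closed under taking subobjects; by Lemma \ref{lem:exactness}~(i) this makes the comparison map faithfully flat from the outset, so the diagram only has to deliver injectivity. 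You instead extract surjectivity from the diagram as well. That is legitimate: for affine group schemes over a field the five lemma in this shape does hold (argue on fppf points: lift the image in $\pi_1^{N}(X,x)\times_k\overline{k}$ fppf-locally using the faithful flatness of the bottom-row projection, and absorb the discrepancy into the kernel via the left isomorphism; a homomorphism that is both a closed immersion and faithfully flat is an isomorphism). So your shortcut is sound and dispenses with the longest part of the paper's argument; what the paper's extra work buys is an explicit identification of the essential image of $p^{*}$. The one step you should not dismiss as purely formal is the left square: the left vertical arrow is by definition the restriction of the comparison map to the kernels, and you need \emph{this particular map} --- not merely the existence of some abstract isomorphism between the kernels --- to be an isomorphism. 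This does hold, because the isomorphism of Lemma \ref{lem:base change for pi1^E} is itself assembled from pullback functors through the identification of Corollary \ref{cor:cor of main2}, but it is a compatibility worth stating explicitly (the paper is equally terse on this point).
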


\begin{proof}
We define the category $\mathcal{D}$ as the full subcategory of $\mathcal{C}^{EN}(X_{\overline{k}})$ consisting of objects of $\mathcal{C}^{EN}(X_{\overline{k}})$ which can be embedded into $p^{*}\mathcal{E}$ for some $\mathcal{E}\in\Ob(\mathcal{C}^{EN}(X))$. Then $\mathcal{D}$ is a sub Tannakian category of $\mathcal{C}^{EN}(X_{\overline{k}})$ which is closed under taking subobjects and its Tannakian fundamental group is isomorphic to $\pi_1^{EN}(X,x)\times_k \overline{k}$~\cite[Proof of Proposition 3.1]{ms02}. We prove this. For ease of natation, we put $G\overset{{\rm def}}{=}\pi_1^{EN}(X,x)$ and $G'\overset{{\rm def}}{=}G\times_k \overline{k}$. Let $P\to X$ be the $G$-torsor associated with the inverse functor $F$ of $\mathcal{C}^{EN}(X)\stackrel{\simeq}{\to}\Rep(G)$(cf.Section 2.2). Then the base change $P_{\overline{k}}\to X_{\overline{k}}$ is a $G'$-torsor on $X_{\overline{k}}$. We define a funcor $F':{\rm Rep}_{\overline{k}}(G')\to\Qcoh(X_{\overline{k}})$ by putting $F'(V)\overset{{\rm def}}{=}\mathcal{O}_{P_{\overline{k}}}\otimes_{\overline{k}}V/G'$. If $V\in\Ob({\rm Rep}_{\overline{k}}(G'))$ is defined over $k$, namely $V=W\otimes_k \overline{k}$ for some $W\in\Ob(\Rep(G))$, then we have
\begin{equation*}
\begin{aligned}
\Hom_{{\rm Rep}_{\overline{k}}(G')}(1,V)&=V^{G'}=W^{G}\otimes_k \overline{k}\\
&=\Hom_{\Rep(G)}(1,W)\otimes\overline{k}
\simeq\Hom_{\calO_X}(\mathcal{O}_X,F(W))\otimes\overline{k}\\
&=\Hom_{\calO_{X_{\overline{k}}}}(\mathcal{O}_{X_{\overline{k}}},F(W)\otimes\overline{k})
=\Hom_{\calO_{X_{\overline{k}}}}(\mathcal{O}_{X_{\overline{k}}},F'(V)).
\end{aligned}
\end{equation*}
However, every object $V\in\Ob({\rm Rep}_{\overline{k}}(G'))$ can be embedded into an object of the form $W\otimes \overline{k}$ for some $W\in\Ob(\Rep(G))$. Therefore, by Proposition \ref{prop:appendix1}, we find that $F'$ is fully faithful. The equality $F(W)\otimes\overline{k}=F(W\otimes\overline{k})$ for $W\in\Ob(\Rep(G))$ implies that the essential image of $F'$ is $\mathcal{D}$. Therefore we have $\mathcal{D}\simeq{\rm Rep}_{\overline{k}}(G')$.

Hence, we obtain a faithfully flat morphism $\pi_1^{EN}(X_{\overline{k}},\overline{k})\to\pi_1^{EN}(X,x)\times_k \overline{k}$. However, this projection can be inserted in the following commutative diagram of exact sequences
\begin{equation*}
\begin{CD}
0@>>>\pi_1^{E}(X_{\overline{k}},\overline{x})@>>>\pi_1^{EN}(X_{\overline{k}}, \overline{x})@>>>\pi_1^{N}(X_{\overline{k}}, \overline{x})@>>>0\\
@. @V\simeq VV @VVV @V\simeq VV @.\\
0@>>>\pi_1^{E}(X, x)\times_k \overline{k}@>>>\pi_1^{EN}(X,x)\times_k \overline{k}               @>>>\pi_1^{N}(X,x)\times_k \overline{k}@>>>0.
\end{CD}
\end{equation*} 
The left vertical arrow is isomorphic by Lemma \ref{lem:base change for pi1^E} and the right vertical arrow is isomorphic by the base change property of $\pi_1^{N}$\cite[Chapetr II, Proposition 5]{no82}. Thus so is the middle one, which completes the proof.
\end{proof}

Next, we will show the K\"unneth formula for $\pi_1^{EN}$. We need again the one for $\pi_1^{E}$:

\begin{lem}\label{lem:kunneth for pi1^E}
Let $X, Y\in\Ob(\Var)$ with $x\in X(k), y\in Y(k)$. Then there exists an isomorphism of  group schemes $\pi_1^{E}(X\times Y, (x, y))\stackrel{\simeq}{\to}\pi_1^{E}(X, x)\times\pi_1^{E}(Y, y)$.
\end{lem}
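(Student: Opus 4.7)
The plan is to combine the structure theorem of Corollary \ref{cor:cor of main2} with the already-known K\"unneth formulas for $\pi_1^{N}$ and $\pi_1^{\uni}$ (both recalled in Section 1 and cited to \cite{ms02}\cite{no82}). Concretely, if $\Lambda=(X_S,x_S)$ and $\Lambda'=(Y_T,y_T)$ denote the pro-systems of connected finite torsors over $X$ and $Y$ attached to $x$ and $y$, I will first check that a cofinal subsystem of the pro-system of connected finite torsors over $(X\times Y,(x,y))$ is given by the products
\begin{equation*}
\bigl(X_S\times Y_T,\,(x_S,y_T)\bigr)_{S,T}.
\end{equation*}
This is exactly the content of the K\"unneth formula for $\pi_1^{N}$: finite quotients of $\pi_1^{N}(X,x)\times\pi_1^{N}(Y,y)$ are cofinal with products of finite quotients of each factor, and the corresponding torsor is the fibre product of the individual torsors.

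Next, for each pair $(S,T)$, the K\"unneth formula for the unipotent fundamental group applied to the smooth proper connected pair $(X_S\times Y_T,(x_S,y_T))$ (noting that $X_S$ and $Y_T$ are geometrically connected and reduced by (\ref{eq:geom-conn}), and remain so in the product) yields an isomorphism
\begin{equation*}
\pi_1^{\uni}(X_S\times Y_T,(x_S,y_T))\xrightarrow{\simeq}\pi_1^{\uni}(X_S,x_S)\times\pi_1^{\uni}(Y_T,y_T).
\end{equation*}
These isomorphisms are compatible with the transition morphisms coming from $S\subseteq S'$, $T\subseteq T'$, so I can pass to the inverse limit over the bifiltered system $(S,T)$.

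Finally, using Corollary \ref{cor:cor of main2} for the product and the fact that the inverse limit over a product of cofiltered systems commutes with finite products of affine group schemes, I obtain
\begin{equation*}
\pi_1^{E}(X\times Y,(x,y))\simeq\varprojlim_{S,T}\pi_1^{\uni}(X_S\times Y_T,(x_S,y_T))\simeq\Bigl(\varprojlim_{S}\pi_1^{\uni}(X_S,x_S)\Bigr)\times\Bigl(\varprojlim_{T}\pi_1^{\uni}(Y_T,y_T)\Bigr),
\end{equation*}
which is $\pi_1^{E}(X,x)\times\pi_1^{E}(Y,y)$ by Corollary \ref{cor:cor of main2} again.

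The only genuine point to verify carefully is the cofinality statement in the first step, i.e.\ that every connected finite pointed torsor over $(X\times Y,(x,y))$ is dominated by one of the form $X_S\times Y_T$; this is where the K\"unneth formula for $\pi_1^{N}$ and the corresponding pro-system description (as in Section 2.2) are essential, together with the observation that $X_S\times Y_T$ remains geometrically connected since $k$ has characteristic $0$ and both factors are geometrically connected and reduced. Everything else is a formal manipulation of pro-systems, so no further obstacle is expected.
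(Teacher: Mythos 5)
Your proposal is correct and follows essentially the same route as the paper: both identify the pro-system of connected finite torsors over $(X\times Y,(x,y))$ with the products $(X_S\times Y_T,(x_S,y_T))$ via the K\"unneth formula for $\pi_1^{N}$, apply the K\"unneth formula for $\pi_1^{\uni}$ on each such torsor, and then pass to the limit using Corollary \ref{cor:cor of main2}. The only cosmetic difference is that the paper carries out the limit interchange on the level of Hopf algebras (a direct limit of coordinate rings) rather than on inverse limits of group schemes, which is the same computation.
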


\begin{proof}
Let $\Lambda_X=(X_S,x_S), \Lambda_Y=(Y_T,y_T)$ be the pro-systems of \'etale torsors associated with the fiber functors $\omega_x:\mathcal{C}^{N}(X)\to\Vecf$ and $\omega_y:\mathcal{C}^{N}(Y)\to\Vecf$, respectively. The K\"unneth formula for $\pi_1^{N}$~(cf.~Remark \ref{rem:kunneth}) implies that $\Lambda\overset{{\rm def}}{=}(X_S\times Y_T,(x_S,y_T))$ is the pro-system associated with $\omega_{(x,y)}:\mathcal{C}^{N}(X\times Y)\to\Vecf$. Thus we have
\begin{equation*}
\begin{aligned}
\pi_1^{E}(X\times Y,(x,y))&=\Spec\varinjlim_{S,T}k[\pi_1^{\uni}(X_S\times Y_T,(x_S,y_T))]\\
&=\Spec\varinjlim_{S,T}k[\pi_1^{\uni}(X_S,x_S)]\otimes k[\pi_1^{\uni}(Y_T,y_T)]\\
&=\Spec(\varinjlim_{S}k[\pi_1^{\uni}(X_S,x_S)])\otimes(\varinjlim_{T}k[\pi_1^{\uni}(Y_T,y_T)])\\
&=\pi_1^{E}(X,x)\times\pi_1^{E}(Y,y),
\end{aligned}
\end{equation*}
where the second equality follows from the K\"unneth formula for $\pi_1^{\uni}$~\cite[Chapter IV, Lemma 8]{no82}. This finishes the proof.
\end{proof}

\begin{rem}\label{rem:kunneth}
From the base change property of $\pi_1^N$~\cite[Chapter II, Proposition 5]{no82}, in the characteristic 0 case, the K\"unneth formula for $\pi_1^N$ is a consequence of the one for the \'etale fundamental group $\pi_1$, which is due to Grothendieck~\cite{gr71}. 
\end{rem}

\begin{prop}\label{prop:kunneth for pi1^EN}
Let $X, Y\in\Ob(\Var)$ with $x\in X(k), y\in Y(k)$. Then there exists an isomorphism of group schemes $\pi_1^{EN}(X\times Y, (x, y))\stackrel{\simeq}{\to}\pi_1^{EN}(X, x)\times\pi_1^{EN}(Y, y)$.
\end{prop}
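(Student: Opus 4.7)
The plan is to imitate Proposition \ref{prop:base change for pi1^EN} with base change replaced by the K\"unneth formula, the outer terms of the extension \eqref{eq:main ex seq} being handled by Lemma \ref{lem:kunneth for pi1^E} and Remark \ref{rem:kunneth}. Let $\pi_X : X \times Y \to X$ and $\pi_Y : X \times Y \to Y$ be the projections. For $\calE \in \Ob(\calC^{EN}(X))$ and $\calF \in \Ob(\calC^{EN}(Y))$ I would first check that the external tensor product $\pi_X^{*}\calE \otimes \pi_Y^{*}\calF$ lies in $\calC^{EN}(X \times Y)$: pullback along a flat morphism preserves the polynomial identity $f(\calE) \simeq g(\calE)$ characterizing finite bundles, and it is exact on short exact sequences of locally free sheaves, so it preserves semifiniteness; Proposition \ref{prop:cat of semi-finite} then ensures the tensor product is again semifinite. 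The resulting $k$-linear exact tensor functor
\begin{equation*}
\Phi : \calC^{EN}(X, x) \otimes_k \calC^{EN}(Y, y) \longrightarrow \calC^{EN}(X \times Y, (x, y)),
\end{equation*}
compatible with fiber functors, induces by Tannaka duality a morphism of group schemes
\begin{equation*}
\Psi : \pi_1^{EN}(X \times Y, (x,y)) \longrightarrow \pi_1^{EN}(X, x) \times \pi_1^{EN}(Y, y).
\end{equation*}

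Next I would fit $\Psi$ into the commutative diagram of short exact sequences obtained from \eqref{eq:main ex seq}:
\begin{equation*}
\begin{CD}
1 @>>> \pi_1^{E}(X \times Y) @>>> \pi_1^{EN}(X \times Y) @>>> \pi_1^{N}(X \times Y) @>>> 1\\
@. @V \alpha VV @V \Psi VV @V \beta VV @.\\
1 @>>> \pi_1^{E}(X) \times \pi_1^{E}(Y) @>>> \pi_1^{EN}(X) \times \pi_1^{EN}(Y) @>>> \pi_1^{N}(X) \times \pi_1^{N}(Y) @>>> 1
\end{CD}
\end{equation*}
and identify $\alpha$ with the K\"unneth isomorphism of Lemma \ref{lem:kunneth for pi1^E} and $\beta$ with the one of Remark \ref{rem:kunneth}. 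Once commutativity of both squares is established, the five lemma (in its form for affine group schemes, or equivalently a diagram chase via the associated Hopf algebras) yields that $\Psi$ is an isomorphism.

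The main obstacle is checking commutativity of the left-hand square. For the right square this is essentially tautological: both $\Psi$ and $\beta$ are induced by $\pi_X^{*}$ and $\pi_Y^{*}$ on essentially finite bundles, so they agree with the Grothendieck K\"unneth map for the \'etale fundamental group via Remark \ref{rem:semi-simple}. For the left square I would unwind Corollary \ref{cor:cor of main2}: writing the pro-systems $(X_S, x_S)_S$, $(Y_T, y_T)_T$ and $((X \times Y)_U, (x,y)_U)_U$ of connected finite \'etale torsors, the K\"unneth formula for $\pi_1^{N}$ identifies the third pro-system with $(X_S \times Y_T, (x_S, y_T))_{S, T}$; the K\"unneth formula for $\pi_1^{\uni}$ (\cite[Chapter IV, Lemma 8]{no82}), applied level by level on this cofinal subsystem, then matches $\alpha$ with the restriction of $\Psi$ to $\pi_1^{E}$. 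The non-trivial bookkeeping here consists in showing that the tensor-functor construction of $\Psi$ induces, on each finite \'etale cover $X_S \times Y_T \to X \times Y$, exactly the external tensor product of unipotent bundles used implicitly in the proof of Lemma \ref{lem:kunneth for pi1^E}; this is verified by pulling back $\pi_X^{*}\calE \otimes \pi_Y^{*}\calF$ along $(X \times Y)_U \to X \times Y$ for $U \supset S \cup T$ and using the projection formula.
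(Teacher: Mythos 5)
Your proposal is correct and follows essentially the same route as the paper: both construct the comparison morphism $\pi_1^{EN}(X\times Y,(x,y))\to\pi_1^{EN}(X,x)\times\pi_1^{EN}(Y,y)$ from the two projections, insert it into the commutative diagram of exact sequences coming from (\ref{eq:main ex seq}), identify the outer vertical arrows as isomorphisms via Lemma \ref{lem:kunneth for pi1^E} and Remark \ref{rem:kunneth}, and conclude by the five lemma. Your extra care about semifiniteness of external tensor products and commutativity of the left square only makes explicit what the paper leaves implicit.
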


\begin{proof}
Let ${\rm pr}_1:X\times Y\to X, {\rm pr}_2:X\times Y\to Y$. These maps induce a faithfully flat morphism $\pi_1^{EN}(X\times Y,(x,y))\to\pi_1^{EN}(X,x)\times\pi_1^{EN}(Y,y)$. This morphism can be inserted in the following commutative diagram:
\begin{footnotesize}
\begin{equation*}
\begin{xy}
\xymatrix{
0\ar[r]&\pi_1^E(X\times Y,(x,y))\ar[r]\ar[d]^{\simeq}&\pi_1^{EN}(X\times Y, (x,y))\ar[r]\ar[d]&\pi_1^{N}(X\times Y, (x,y))\ar[r]\ar[d]^{\simeq}& 0\\
0\ar[r]&\pi_1^{E}(X,x)\times\pi_1^{E}(Y,y)\ar[r]&\pi_1^{EN}(X,x)\times\pi_1^{EN}(Y,y)\ar[r]&\pi_1^{N}(X,x)
\times\pi_1^{N}(Y,y)\ar[r]& 0,
}
\end{xy}
\end{equation*}
\end{footnotesize}where the left and right vertical arrows are isomorphic by Lemma \ref{lem:kunneth for pi1^E} and the K\"unneth formula for $\pi_1^{N}$~(Remark \ref{rem:kunneth}), respectively. Therefore the middle one is also isomorphic, which completes the proof.
\end{proof}


\subsection{On the homotopy sequence for $\pi_1^{EN}$}

In this subsection, we discuss on the homotopy sequence for $\pi_1^{EN}$. Let $k$ be a field of characteristic 0 and let $X,S\in\Ob(\Var)$ together with rational points $x\in X(k)$ and $s\in S(k)$. Let $f:X\to S$ be a separable (proper) morphism of $X$ to $S$  with geometrically connected fibers and with $f(x)=s$. Consider the following diagram:
\begin{equation*}
\begin{xy}
\xymatrix{\ar@{}[rd]|{\square}
X_s\ar[r]^{t}\ar[d]_g& X\ar[d]^f\\
\Spec k\ar[r]^s& S.
}
\end{xy}
\end{equation*}
This induces the following complex of affine group schemes
\begin{equation}\label{eq:hmtp seq}
\pi_1^{EN}(X_s,x)\overset{t_*}{\to}\pi_1^{EN}(X,x)\overset{f_*}{\to}\pi_1^{EN}(S,s)\to 1.
\end{equation}

We first show that the sequence (\ref{eq:hmtp seq}) is exact at $\pi_1^{EN}(S,s)$:

\begin{lem}\label{lem:right exactness}
Under the above notation, the homomorphism $f_*:\pi_1^{EN}(X,x)\to\pi_1^{EN}(S,s)$ is a faithfully flat morphism.
\end{lem}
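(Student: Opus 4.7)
The plan is to invoke Lemma~\ref{lem:exactness}~(i): it suffices to show that the pullback functor $f^{*}:\calC^{EN}(S)\to\calC^{EN}(X)$ is fully faithful and that its essential image is closed under taking subobjects in $\calC^{EN}(X)$. I will address these two properties in turn.

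For the full faithfulness, the key input is that $f_{*}\calO_{X}\simeq\calO_{S}$, which holds because $f$ is a proper separable morphism with geometrically connected fibers over a reduced base in characteristic $0$; this can be extracted from the Stein factorization. Since every object of $\calC^{EN}(S)$ is a vector bundle, the projection formula yields
\begin{equation*}
\Hom_{\calO_{X}}(f^{*}\calE,f^{*}\calF)\simeq H^{0}(S,(\calE^{\vee}\otimes\calF)\otimes f_{*}\calO_{X})\simeq\Hom_{\calO_{S}}(\calE,\calF)
\end{equation*}
for all $\calE,\calF\in\Ob(\calC^{EN}(S))$, giving the required isomorphism.

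For the closure under subobjects, let $\calF\hookrightarrow f^{*}\calE$ with $\calE\in\Ob(\calC^{EN}(S))$; the goal is to produce $\mathcal{G}\subset\calE$ in $\calC^{EN}(S)$ with $f^{*}\mathcal{G}\simeq\calF$. The strategy is to fix a filtration $\calE=\calE^{(0)}\supset\calE^{(1)}\supset\cdots\supset\calE^{(n)}=0$ with finite subquotients, pull it back along $f$, and intersect with $\calF$ to obtain a filtration of $\calF$ whose graded pieces embed into $f^{*}(\calE^{(i)}/\calE^{(i+1)})$. Since finite indecomposable bundles are simple in $\calC^{EN}$, each such graded piece is itself a finite bundle on $X$, so descending it reduces to the faithful flatness of $\pi_{1}^{N}(X,x)\to\pi_{1}^{N}(S,s)$, which in characteristic $0$ is essentially Grothendieck's classical surjectivity theorem for the étale fundamental group of a proper separable family with geometrically connected fibers~\cite{gr71}. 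An induction on $n$ then reassembles these graded descents into a subobject $\mathcal{G}\subset\calE$ with $f^{*}\mathcal{G}\simeq\calF$.

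The main obstacle I anticipate is the reassembly step: after descending each graded piece of $\calF$ to $S$, one must match the extension class of $\calF$ in $\mathrm{Ext}^{1}_{X}(f^{*}\overline{\mathcal{G}},f^{*}\mathcal{G}_{1})$ with a class pulled back from $\mathrm{Ext}^{1}_{S}(\overline{\mathcal{G}},\mathcal{G}_{1})$. This should follow because the extension is the restriction along $\overline{\mathcal{G}}\hookrightarrow f^{*}(\calE/\calE^{(1)})$ of the ambient extension of $f^{*}(\calE/\calE^{(1)})$ by $f^{*}\calE^{(1)}$, which itself is visibly the pullback of the corresponding extension on $S$; compatibility of this restriction with $f^{*}$ on $\mathrm{Ext}^{1}$ can be controlled via the Leray spectral sequence using $f_{*}\calO_{X}\simeq\calO_{S}$.
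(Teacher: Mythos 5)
Your reduction to Lemma \ref{lem:exactness}(i) and your full-faithfulness argument via $f_*\calO_X\simeq\calO_S$ and the projection formula are exactly what the paper does. The gap is in the closure-under-subobjects step, and it sits precisely where you anticipated: the reassembly. What your sketch actually shows is that the \emph{pushout} of the class of $\calF$ along $\calF_1=\calF\cap f^{*}\calE^{(1)}\hookrightarrow f^{*}\calE^{(1)}$ --- i.e.\ its image in $\mathrm{Ext}^1_X(\calF/\calF_1,\,f^{*}\calE^{(1)})$ --- is pulled back from $S$, because it is the restriction of the ambient extension. But the map $\mathrm{Ext}^1_X(\calF/\calF_1,\calF_1)\to\mathrm{Ext}^1_X(\calF/\calF_1,f^{*}\calE^{(1)})$ has kernel the image of $\Hom(\calF/\calF_1,\,f^{*}\calE^{(1)}/\calF_1)$ and is not injective in general, so this gives no control on the class of $\calF$ itself. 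The Leray sequence only tells you that $\mathrm{Ext}^1_S(\mathcal{G}_0,\mathcal{H})$ injects into $\mathrm{Ext}^1_X(f^{*}\mathcal{G}_0,f^{*}\mathcal{H})$ with cokernel landing in $H^0(S,R^1f_*\mathcal{H}om(f^{*}\mathcal{G}_0,f^{*}\mathcal{H}))$, which is typically nonzero (already for $\mathcal{G}_0=\mathcal{H}=\calO_S$ and fibers of positive genus); you would still have to show that the obstruction class of $\calF$ in that $H^0$ vanishes, i.e.\ that the extension splits fiberwise, and that is exactly where the hypothesis $\calF\subset f^{*}\calE$ must be used. Nothing in your sketch does this.

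The paper sidesteps the extension-class bookkeeping entirely: it invokes \cite[Lemma 8.1(b)]{la11}, which gives directly that for any subbundle $\calE'\subset f^{*}\calE$ in $\calC^{EN}(X)$ the pushforward $f_*\calE'$ is locally free and the adjunction $f^{*}f_*\calE'\to\calE'$ is an isomorphism --- this is the geometric input (a restriction-to-fibers/base-change argument) that your homological approach is missing. It then proves that $f_*\calE'$ is semifinite by induction on the rank, peeling off a finite subbundle $\calE''\subset\calE'$ and using the left exactness of $f_*$ together with a rank count to see that $0\to f_*\calE''\to f_*\calE'\to f_*(\calE'/\calE'')\to 0$ remains exact. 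To salvage your route you would essentially have to reprove Langer's lemma. Two smaller points: the paper first reduces to $k$ algebraically closed via Proposition \ref{prop:base change for pi1^EN}, which you should also do before invoking the surjectivity of the \'etale fundamental group; and your identification of the graded pieces as finite bundles is fine, since $\calC^{N}(X)$ is closed under subquotients in $\calC^{EN}(X)$.
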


\begin{proof}
From the base change property for $\pi_1^{EN}$~(Proposition \ref{prop:base change for pi1^EN}), we may assume that $k$ is an algebraically closed field. Since $f_*\calO_X=\calO_S$, the projection formula implies the functor $f^*:\calC^{EN}(S)\to\calC^{EN}(X)$ is fully faithful. It remains to be proven that the essential image of $f^*$ is closed under taking subobjects in $\calC^{EN}(X)$. Take any object  $\calE\in\Ob(\calC^{EN}(S))$ and any subbundle $\calE'\subset f^*\calE$ in $\calC^{EN}(X)$. We must show that there exists a semifinite bundle $\calF$ on $S$ such that $\calE'\simeq f^*\calF$. Then the same argument in the proof of \cite[Lemma 8.1~(b)]{la11} implies that $f_*\calE'$ is locally free on $S$ and $\calE'\simeq f^*f_*\calE'$. Therefore, it suffices to show that $f_*\calE'$ is semifinite. If $\calE'$ is finite, then so is $f_*\calE'$~\cite{zh13}, so we may assume that $\calE'$ is not finite. Then there exists a nonzero finite subbundle $\calE''\subset\calE'$ such that $\rank\calE''<\rank\calE'$. Then the quotient $\calE'''\overset{{\rm def}}{=}\calE'/\calE''$ also satisfies these properties, i.e., $f_*\calE'''$ is locally free and $f^*f_*\calE'''\simeq\calE'''$. Therefore, by induction, $f_*\calE'''$ is semifinite on $S$. Furthermore, the sequence 
\begin{equation*}
0\to f_*\calE''\to f_*\calE'\to f_*\calE'''\to 0
\end{equation*}
is exact because of the left exactness of $f_*$ and the equality:
$\rank f_*\calE'''=\rank f_*\calE'-\rank f_*\calE''$.
This implies that $f_*\calE'$ is also semifinite. This finishes the proof.
\end{proof}

Therefore, the exactness of the sequence (\ref{eq:hmtp seq}) is equivalent to the exactness at $\pi_1^{EN}(X,x)$. Now we seek to say further this condition in terms of vector bundles as \cite[Theorem 3.1]{zh13}. For this, we need the universality of $\pi_1^{EN}$. The following lemma is inspired by the related work due to Borne-Vistoli~(cf.~Remark \ref{rem:int2}):

\begin{lem}\label{lem:universality of pi_1^EN}~(cf.~Remark \ref{rem:universality of pi_1^N})
Let $(X_x^{EN},x^{EN})$ be the universal $\pi_1^{EN}(X,x)$-torsor with:
\begin{equation*}
F(X_x^{EN}):\Rep(\pi_1^{EN}(X,x))\overset{\omega_x^{-1}}{\simeq}\calC^{EN}(X)\subset\Qcoh(X).
\end{equation*}
Then for any \textit{locally unipotent} algebraic affine group scheme $G$ over $k$, the natural map
\begin{equation*}
\Hom(\pi_1^{EN}(X,x),G)\to{{\rm Tors}}((X,x),G);
\,\phi\mapsto (X_x^{EN}\times G)/\pi_1^{EN}(X,x)
\end{equation*}
is bijective. Here ${{\rm Tors}}((X,x),G)$ stands for the set of isomorphism classes of $G$-torsors $P\to X$ together with a rational point $p$ above $x$. 
\end{lem}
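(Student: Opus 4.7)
The strategy is to mimic Nori's Tannakian construction underlying Remark~\ref{rem:universality of pi_1^N}. I construct the inverse of the given map by sending a pointed $G$-torsor $(P,p)\to(X,x)$ to the homomorphism $\phi_P:\pi_1^{EN}(X,x)\to G$ obtained from the associated exact faithful tensor functor
\begin{equation*}
F(P):\Rep(G)\to\Qcoh(X);\quad V\mapsto(\calO_P\otimes_k V)/G.
\end{equation*}
By Nori's Proposition~2.9~\cite[Chapter I]{no82} (cf.\ Section~2.2), the existence of $p$ above $x$ forces $x^*\circ F(P)$ to coincide with the forgetful functor $\Rep(G)\to\Vecf$. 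A homomorphism $\phi_P$ will therefore arise by Tannaka duality once the factorization $F(P)(\Rep(G))\subset\calC^{EN}(X)$ is established.

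The decisive point is this factorization, and this is precisely where the hypothesis that $G$ is \emph{locally unipotent} enters. I interpret that condition as yielding an exact sequence
\begin{equation*}
1\to U\to G\to F\to 1
\end{equation*}
with $U$ a closed normal unipotent subgroup and $F=G/U$ a finite $k$-group scheme. For any $V\in\Rep(G)$, the unipotent action of $U$ supplies a $G$-stable filtration $0=V_0\subset V_1\subset\cdots\subset V_n=V$ such that $V_i/V_{i-1}$ has trivial $U$-action---obtained by iterating the functor $W\mapsto W^U$, which preserves $G$-stability because $U$ is normal in $G$. Each subquotient thus descends to a representation of $F$. The quotient $P/U\to X$ is an $F$-torsor and, since $F$ is finite, this is a finite torsor over $X$; by Remark~\ref{rem:universality of pi_1^N}, the associated bundle $F(P/U)(V_i/V_{i-1})$ is essentially finite. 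Since $U$ acts trivially on $V_i/V_{i-1}$, the two associated-bundle constructions agree, $F(P)(V_i/V_{i-1})\simeq F(P/U)(V_i/V_{i-1})$. Exactness of $F(P)$ then realizes $F(P)(V)$ as a successive extension of finite bundles, whence $F(P)(V)\in\Ob(\calC^{EN}(X))$.

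For bijectivity, I verify that $\phi\mapsto(X_x^{EN}\times G)/\pi_1^{EN}(X,x)$ and $(P,p)\mapsto\phi_P$ are mutually inverse; this is formal once the factorization above is in place and reduces to the reconstruction part of Nori's Proposition~2.9, which recovers a $G$-torsor canonically from its tensor functor. I expect the main obstacle to be pinning down the precise structural content of \emph{locally unipotent} and confirming that $P/U\to X$ is genuinely representable as an $F$-torsor; the latter is standard fpqc-descent for quotients of torsors by closed normal subgroups but must be written down carefully. Once that is in hand, the remainder is formal Tannakian bookkeeping.
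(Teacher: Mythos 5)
Your proposal is correct and follows essentially the same route as the paper: reduce via Nori's Proposition 2.9 to showing that every exact tensor functor $\Rep(G)\to\Qcoh(X)$ with $x^*\circ F$ the forgetful functor lands in $\calC^{EN}(X)$, then use unipotence of $G^0$ to filter any $V$ by iterated $G^0$-invariants, whose subquotients descend to the finite quotient $G/G^0$ and hence map to finite bundles, so exactness and extension-closedness of $\calC^{EN}(X)$ conclude. The paper phrases the filtration step as an induction on $\dim V$ using $V^{G^0}\neq 0$ and works with the restricted functor on $\Rep(G/G^0)$ rather than the quotient torsor $P/G^0$, which sidesteps the representability point you flag, but these are cosmetic differences.
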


 Here an affine algebraic group $G$ is said to be \textit{locally unipotent} if the connected component $G^0$ of the identity element of $G$ is unipotent.

\begin{proof}
There exists a bijection between the set ${\rm Tors}((X,x),G)$ and the set of $k$-linear exact $\otimes$-functors $F$ of $\Rep(G)$ into $\Qcoh(X)$ with $x^*\circ F$ the forgetful functor~\cite[Proposition 2.9]{no76}\cite[Chapter 1, Proposition 2.9]{no82}. Thus it suffices to show that for any $k$-linear exact $\otimes$-functor $F:\Rep(G)\to\Qcoh(X)$ factors through the inclusion $\calC^{EN}(X)\hookrightarrow\Qcoh(X)$. If $G^0=1$, then $G$ is finite, whence $F(\Rep(G))\subset\calC^{N}(X)\subset\calC^{EN}(X)$~(\cite[Proposition 3.8]{no76}\cite[Chapter I, Proposition 3.8]{no82}). Therefore, we may assume that $G^0\neq 1$. Let $0\neq V\in\Ob(\Rep(G))$. Then since $G^0$ is unipotent, $V^{G^0}\neq 0$. Since $F$ is exact, we have the following exact sequence in $\Qcoh(X)$:
\begin{equation*}
0\to F(V^{G^0})\to F(V)\to F(V/V^{G^0})\to 0.
\end{equation*}
Since $V^{G^0}$ comes from the representation of the finite group scheme $G/G^{0}$, $F(V^{G^0})$ is a finite bundle on $X$. On the other hand, from the induction hypothesis, $F(V/V^{G^0})$ is a semifinite bundle on $X$. Therefore, the middle one $F(V)$ is also semifinite. This completes the proof. 
\end{proof}

From the universality of $\pi_1^{EN}$, together with the Tannakian interpretation, 
we obtain the analogous criterion for the exactness of the homotopy sequence (\ref{eq:hmtp seq}) as \cite[Theorem 3.1]{zh13}. Before stating it, we recall the notion of \textit{saturated  torsors}~\cite[Definition 1]{zh13}:
Let $G$ be a locally unipotent algebraic group over $k$. Then a $G$-tosror $(P,p)\to (X,x)$ is said to be \textit{saturated} if the corresponding homomorphim $\pi_1^{EN}(X,x)\to G$~(Lemma \ref{lem:universality of pi_1^EN}) is surjective.

\begin{thm}(cf. \cite[Theorem 3.1]{zh13})\label{thm:hmtp seq} The homotopy sequence (\ref{eq:hmtp seq}) is exact if the following condition is satisfied:
For any saturated locally unipotent $G$-torsor $\pi:(P,p)\to (X,x)$ with $\pi_*\calO_P=\varinjlim_{\alpha}\calE_{\alpha}$ where $\calE_{\alpha}\in \calC^{EN}(X)$, we have

(a) the natural maps $s^*f_*\calE_{\alpha}\to g_*t^*\calE_{\alpha}$ are  isomorphisms, and

(b) the push-forward $f_*\calE_{\alpha}$ is a semifinite bundle on $S$ for any $\alpha$.
\end{thm}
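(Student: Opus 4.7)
The plan is to verify the exactness of (\ref{eq:hmtp seq}) at $\pi_1^{EN}(X,x)$ by applying Lemma \ref{lem:exactness}(iii) to the induced sequence of tensor functors
\[
\calC^{EN}(S) \xrightarrow{f^*} \calC^{EN}(X) \xrightarrow{t^*} \calC^{EN}(X_s).
\]
Right exactness at $\pi_1^{EN}(S,s)$, i.e.\ that $f_*$ is faithfully flat, is already Lemma \ref{lem:right exactness}. The closed-immersion property of $t_*$ corresponds to condition (c) of Lemma \ref{lem:exactness}(iii): every $\mathcal{G} \in \calC^{EN}(X_s)$ must be a subquotient of $t^*\calE$ for some $\calE \in \calC^{EN}(X)$. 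Middle exactness reduces to conditions (a) and (b) of Lemma \ref{lem:exactness}(iii), namely that $t^*\calE$ is trivial iff $\calE \simeq f^*\calF$ for some $\calF$, and that the maximal trivial subobject of $t^*\calE$ lifts to a genuine subobject of $\calE$ in $\calC^{EN}(X)$.

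The key device for all three checks is Lemma \ref{lem:universality of pi_1^EN}: any $\calE \in \calC^{EN}(X)$ is realized inside a saturated locally unipotent $G$-torsor $\pi\colon P \to X$ whose coordinate ring $\pi_*\calO_P = \varinjlim_{\alpha} \calE_{\alpha}$ is an ind-object of $\calC^{EN}(X)$ to which the hypotheses of the theorem apply: each $\calE_{\alpha}$ satisfies $s^*f_*\calE_{\alpha} \simeq g_*t^*\calE_{\alpha}$ by (a) and has semifinite pushforward $f_*\calE_{\alpha} \in \calC^{EN}(S)$ by (b). For condition (a) of Lemma \ref{lem:exactness}(iii), triviality of $t^*\calE$ propagates through the ind-system to trivialize the torsor $g^*P \to X_s$; combined with $f_*\calO_X = \calO_S$ and saturation, this lets one descend $P$ to a $G$-torsor $Q \to S$, whence $\calE \simeq f^*\calF$ with $\calF$ semifinite by hypothesis (b). For condition (b) the maximal trivial subobject of $t^*\calE$ equals the image of the evaluation map $H^0(X_s,t^*\calE) \otimes \calO_{X_s} \to t^*\calE$; hypothesis (a) identifies $H^0(X_s,t^*\calE)$ with $s^*f_*\calE$, and semifiniteness of $f_*\calE$ (hypothesis (b)) provides the required subobject $\calE_0 \simeq f^*f_*\calE \hookrightarrow \calE$ in $\calC^{EN}(X)$. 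Finally for condition (c), one takes the saturated torsor $Q \to X_s$ attached to $\mathcal{G}$ via Lemma \ref{lem:universality of pi_1^EN} and realizes $Q$ as the restriction of a saturated torsor on $X$, using the ind-system interpretation on $X$.

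The main obstacle will be condition (c) — extending a saturated locally unipotent torsor from the fibre $X_s$ to the total space $X$. Unlike the other two conditions, this does not follow purely from the base-change identity of hypothesis (a); it requires assembling, from the ind-data over $X$, enough semifinite bundles to capture every representation of $\pi_1^{EN}(X_s,x)$. In Zhang's analogous criterion for $\pi_1^N$ in \cite[Theorem 3.1]{zh13} the corresponding step rests on geometric properties of étale torsors under the proper separable morphism $f$; in our setting one must replace this by a Tannakian argument combining saturation with the base-change isomorphisms of hypothesis (a), checked compatibly across a cofinal family of subobjects and compatibly with the tensor structure.
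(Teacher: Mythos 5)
Your overall strategy has a genuine gap at exactly the point you flag as ``the main obstacle.'' To invoke Lemma \ref{lem:exactness}(iii) for the sequence $\calC^{EN}(S)\xrightarrow{f^*}\calC^{EN}(X)\xrightarrow{t^*}\calC^{EN}(X_s)$ you must first know that $t_*$ is a closed immersion, i.e.\ (Lemma \ref{lem:exactness}(ii)) that every object of $\calC^{EN}(X_s)$ is a subquotient of $t^*\calE$ for some semifinite $\calE$ on $X$. This is not part of the theorem's conclusion, is not implied by hypotheses (a) and (b), and is false in general: the map $\pi_1^{EN}(X_s,x)\to\pi_1^{EN}(X,x)$ need not be injective (already for $\pi_1^{N}$, i.e.\ for the \'etale fundamental group of the fibre mapping to that of the total space, the kernel is typically large for a fibration of curves over a curve). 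Exactness of (\ref{eq:hmtp seq}) only asserts $\im(t_*)=\Ker(f_*)$ together with surjectivity of $f_*$, so one must either replace $\pi_1^{EN}(X_s,x)$ by its image before applying the criterion, or avoid the criterion for the left-hand term altogether. Your sketch never closes this gap; the final paragraph acknowledges the difficulty but offers no argument, and no argument exists for the statement as you have set it up.

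The paper's proof takes the second route and never needs condition (c). For each algebraic quotient $G$ of $\pi_1^{EN}(X,x)$ with saturated torsor $(P,p)$, it uses hypotheses (a) and (b) together with Lemma \ref{lem:universality of pi_1^EN} to produce a saturated $G'$-torsor $(P',p')$ on $S$, where $G'$ is the Tannakian quotient attached to $\langle f_*\calE_\alpha\,|\,\alpha\rangle_\otimes\subset\calC^{EN}(S)$ (a legitimate subcategory only because of (b)), and to identify the subrepresentation $M\subset k[G]$ corresponding to $f^*f_*\pi_*\calO_P$ with $k[G]^H$, $H=\Ker(G\twoheadrightarrow G')$. Hypothesis (a) then identifies $k[G]^H$ with the maximal trivial $\pi_1^{EN}(X_s,x)$-subrepresentation $k[G]^{N}$ of $k[G]$, forcing $H=N=N'$ for every such $G$, which is precisely exactness in the middle. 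Your treatment of conditions (a) and (b) of Lemma \ref{lem:exactness}(iii) (triviality detection via descent to $S$, and lifting the maximal trivial subobject through $f^*f_*$) is in the right spirit and parallels this computation, but without repairing the closed-immersion step the proposal as written does not establish the theorem.
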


\begin{proof}
We adopt the proof of  \cite[Theorem 3.1]{zh13}.
We first show that for each saturated $G$-torsor $\pi:(P,p)\to (X,x)$, there exists a saturated $G'$-torsor $\pi':(P',p')\to (S,s)$ together with a map 
$\theta:(G,P,p)\to f^*(G',P',p')$ 
with $(\pi'_*\calO_P')_s\overset{\simeq}{\to}(f_*\pi_*\calO_P)_s$.
Indeed, from the condition (a), for each $\alpha$, the adjunction map $f^*f_*\calE_{\alpha}\to\calE_{\alpha}$ is injective~(\cite[Theorem 3.1]{zh13}). Thus, so is $f^*f_*\pi_*\calO_P\to\pi_*\calO_P$. Let $G'$ be the quotient of $\pi_1^{EN}(S,s)$ corresponding to the Tannakian subcategory $\langle f_*\calE_{\alpha}\,|\,\alpha\rangle_{\otimes}\subset\calC^{EN}(S)$. Then the fully faithfulness of $f$ implies that $\langle f_*\calE_{\alpha}\,|\,\alpha\rangle_{\otimes}\overset{\simeq}{\to}\langle f^*f_{*}\calE_{\alpha}\,|\,\alpha\rangle_{\otimes}$, so $G'$ appears as a quotient of $G$. Let $H\overset{{\rm def}}{=}\Ker(G\twoheadrightarrow G')$ and let $M\subset k[G]=(k[G],\rho_{\rm reg})$ be the subrepresentation corresponding to $f^*f_*\pi_*\calO_P\subset\pi_*\calO_P$. Since $M$ comes from an object in ${\rm Ind}(\Rep(G'))$, the inclusion $M\hookrightarrow k[G]$ factors through $k[G]^{H}\hookrightarrow k[G]$, i.e., $M\subseteq k[G]^H\subset k[G]$. From the universality of $\pi_1^{EN}(S,s)$~(Lemma \ref{lem:universality of pi_1^EN}), the surjective map $\pi_1^{EN}(S,s)\twoheadrightarrow G'$ corresponds to some saturated $G'$-torsor $\pi':(P',p')\to (S,s)$ and we have a morphism of torsors, $\theta:(P,G,p)\to f^*(P',G',p')$.
Since $k[G']=k[G]^{H}$, the morphism $P\to f^*P'=P/H$ is faithfully flat. Therefore, we obtain an inclusion $f^*\pi'_*\calO_{P'}\subset\pi_*\calO_{P}$, which factors through $f^*f_*\pi_*\calO_P\subset\pi_*\calO_P$, whence $f^*\pi'_*\calO_{P'}\subset f^*f_*\pi_*\calO_P$. This implies that $k[G]^H\subseteq M$, whence $M=k[G]^H=k[G']$. This implies that 
$(\pi'_*\calO_{P'})_s\simeq (f_*\pi_*\calO_P)_s$. 

Finally we will show that the exactness of the sequence (\ref{eq:hmtp seq}). It suffices to show the exactness at $\pi_1^{EN}(X,x)$. Let $\pi_1^{EN}(X,x)\twoheadrightarrow G$ be any quotient algebraic group. Let $\pi:(P,G,p)\to (X,x)$ be the corresponding saturated torsor. Then from the above discussion, there exists a saturated torsor $\pi':(P',G',p')\to (S,s)$ together with a map $\theta:(P,G,p)\to f^*(P',G',p')$ inducing an isomorphism $(\pi'_*\calO_{P'})_s\simeq (f_*\pi_*\calO_P)_s$. Let
\begin{itemize}
\item $H\overset{{\rm def}}{=}\Ker(G\twoheadrightarrow G')$;
\item $N\overset{{\rm def}}{=}\im(\pi_1^{EN}(X_s,x)\to\pi_1^{EN}(X,x)\twoheadrightarrow G)$；
\item $N'\overset{{\rm def}}{=}\im(\Ker(f_*)\to\pi_1^{EN}(X,x)\twoheadrightarrow G)$.
\end{itemize}
Note that $N\subseteq N'\subseteq H(\subseteq G)$. We will show that $H=N=N'$. First, notice that the map $\theta |_{X_s}:f^*f_*\pi_*\calO_P|_{X_s}\hookrightarrow\calO_P|_{X_s}$ corresponds to the homomorphism of the representations $k[G]^H\hookrightarrow k[G]$ in ${{\rm Ind}}(\Rep(\pi_1^{EN}(X_s,x)))$. However, from the base properties of $\pi_*\calO_P$, we have $f^*f_*\pi_*\calO_P|_{X_s}\simeq \calO_{X_s}\otimes H^0(X_s,\pi_*\calO_P|_{X_s})$, 
so the subrepresentation $k[G]^H$ is the maximal trivial subrepresentation of $k[G]$ in the category ${{\rm Ind}}(\Rep(\pi_1^{EN}(X_s,x)))$, whence the equality $k[G]^H=k[G]^{N}$. This imples that $H=N$, whence $N=N'=H$.  This completes the proof.
\end{proof}

Now as an application, we prove that the homotopy sequence (\ref{eq:hmtp seq}) is exact for a projective \textit{Du Bois} family over a smooth projective curve $f:X\to S$. Here a projective flat family $f:X\to S$ over a proper scheme $S$ is said to be \textit{Du Bois} if for each closed point $s'\to S$, the fiber $f^{-1}(s')=X_{s'}$ is  Du Bois in the sense of \cite[Definition 3.2]{sc07}.

\begin{cor}\label{cor:hmtp seq}
Assume that $f:X\to S$ is a projective Du Bois family with $f(x)=s$. Then:

(1) The sequence (\ref{eq:hmtp seq}) is exact if any saturated locally unipotent $G$-torsor $\pi:(P,p)\to (X,x)$ with $\pi_*\calO_P=\varinjlim_{\alpha}\calE_{\alpha}$ satisfies the condition (b) in Theorem \ref{thm:hmtp seq}.

(2) If $S$ is a projective smooth curve over $k$, then the sequence (\ref{eq:hmtp seq}) is exact.
\end{cor}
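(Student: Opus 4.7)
The plan is to invoke Theorem \ref{thm:hmtp seq} and verify its hypotheses (a) and (b) in each case. For Part~(1), condition (b) is given; I must show that the projectivity and Du Bois hypotheses on $f$ force condition (a) to hold automatically. For Part~(2), I must additionally establish (b), using the extra structure of $S$ as a smooth projective curve, so that Part~(1) applies.

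For (a), the basic input is the base-change theorem for projective Du Bois morphisms (Du Bois--Koll\'ar--Kov\'acs): the sheaves $R^if_*\calO_X$ are locally free on $S$ and compatible with arbitrary base change. I plan to bootstrap this to all $\calE_\alpha$ in two stages. First, for a finite bundle $\calF$ on $X$, I choose a finite \'etale Galois torsor $\rho:\tilde X\to X$ trivializing $\calF$, so that $\rho^*\calF\simeq\calO_{\tilde X}^{r}$. Since $\tilde f\overset{{\rm def}}{=}f\circ\rho:\tilde X\to S$ is again a projective Du Bois family (\'etale maps preserve the Du Bois property), base change holds for $\tilde f_*\rho^*\calF\simeq(\tilde f_*\calO_{\tilde X})^{r}$. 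The projection formula $\rho_*\rho^*\calF\simeq\calF\otimes\rho_*\calO_{\tilde X}$, together with the characteristic-zero splitting $\calO_X\hookrightarrow\rho_*\calO_{\tilde X}$ coming from the normalized trace (Maschke for the Galois group of $\rho$), then lets me extract base change for $f_*\calF$ and $R^if_*\calF$ as a direct summand of base change for $\tilde f_*\rho^*\calF$. Second, for a general semifinite $\calE_\alpha$, I use the defining filtration
\[
\calE_\alpha=\calE^{(0)}\supset\calE^{(1)}\supset\cdots\supset\calE^{(n)}=0
\]
with finite quotients: the long exact sequence of higher direct images attached to $0\to\calE^{(i+1)}\to\calE^{(i)}\to\calE^{(i)}/\calE^{(i+1)}\to 0$, combined with the base change for the finite subquotient (first stage) and for $\calE^{(i+1)}$ (inductive hypothesis), yields base change for each $\calE^{(i)}$, hence for $\calE_\alpha$.

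For Part~(2), I need condition (b) under the extra hypothesis that $S$ is a smooth projective curve. Since $S$ is one-dimensional and smooth, $f_*\calE_\alpha$ is automatically torsion-free, hence locally free. Using the first-stage base change of $R^1f_*$ for finite bundles established above, the filtration of $\calE_\alpha$ by finite bundles pushes forward to a filtration of $f_*\calE_\alpha$ whose successive quotients are of the form $f_*\calG$ for $\calG$ finite on $X$. By Zhang's theorem~\cite[Theorem 3.1]{zh13} applied to the sub-family of essentially finite bundles (whose conditions (a) and (b) are exactly what the Du Bois hypothesis together with smoothness of the one-dimensional base $S$ provides), each such $f_*\calG$ is essentially finite on $S$, i.e., lies in $\calC^N(S)$. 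Hence $f_*\calE_\alpha\in\Ob(\calC^{EN}(S))$, which is condition (b). With (a) and (b) both in hand, Theorem \ref{thm:hmtp seq} gives exactness of (\ref{eq:hmtp seq}).

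The main obstacle is the careful propagation of the Du Bois base-change property along the filtration defining a semifinite bundle: not only must $f_*\calE^{(i)}$ commute with base change, but so must $R^1f_*\calE^{(i)}$, and this in turn must be verified by reducing to the trivial sheaf via \'etale Galois covers. The only real subtlety is ensuring that the characteristic-zero splitting of $\rho_*\calO_{\tilde X}$ is compatible with all operations; once this is in place the argument is formal, and both parts of the corollary reduce cleanly to Theorem \ref{thm:hmtp seq}.
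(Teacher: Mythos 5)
Your part (1) is essentially the paper's argument: condition (a) is reduced along the defining filtration to base change for finite bundles, and the finite case is handled by exhibiting a finite bundle as a direct summand of $\rho_*\calO_{\tilde X}$ for an \'etale cover $\rho$ and invoking local freeness and base change for the Du Bois family $f\circ\rho$ (this is Lemma \ref{lem:hmtp seq}~(1) in the paper). One structural caveat: you peel off the finite piece as a \emph{quotient} $\calE^{(i)}/\calE^{(i+1)}$, so the connecting map in $0\to f_*\calE^{(i+1)}\to f_*\calE^{(i)}\to f_*(\calE^{(i)}/\calE^{(i+1)})\to R^1f_*\calE^{(i+1)}$ lands in $R^1f_*$ of a \emph{semifinite} bundle, and your induction would then have to carry base change and local freeness for $R^1f_*$ (hence $R^2f_*$, etc.) of semifinite bundles — sheaves which, being kernels and cokernels of maps of locally free sheaves over a general base $S$, need not be locally free. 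The paper avoids this by peeling off a finite \emph{sub}bundle $\calE'\subset\calE$ (e.g.\ the bottom step of the filtration), so the connecting map lands in $R^1f_*\calE'$ with $\calE'$ finite, which is exactly what Lemma \ref{lem:hmtp seq}~(1) controls. Rearrange your induction accordingly.

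Part (2) has a genuine gap. You assert that the filtration of $\calE_\alpha$ by finite bundles ``pushes forward to a filtration of $f_*\calE_\alpha$ whose successive quotients are of the form $f_*\calG$ for $\calG$ finite.'' This is false: $f_*$ is only left exact, and for $0\to\calE'\to\calE\to\calE''\to 0$ the quotient $f_*\calE/f_*\calE'$ is $\Ker\bigl(\alpha:f_*\calE''\to R^1f_*\calE'\bigr)$, not $f_*\calE''$; the connecting map $\alpha$ has no reason to vanish (already for a trivial family $X=S\times F$ an extension class with nonzero K\"unneth component in $H^0(S,\calO_S)\otimes H^1(F,\calO_F)$ gives $\alpha\neq 0$). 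The entire content of the paper's proof of (2) is to show that $\Ker\alpha$ is nevertheless a \emph{degree-zero} subbundle of the semistable degree-zero bundle $f_*\calE''$, hence semistable of slope $0$ and therefore semifinite; a priori a subbundle of $f_*\calE''$ could have negative degree and then would not even lie in $\mathcal{S}(S)$. Forcing $\Deg(\Ker\alpha)=0$ requires $\Deg(\im\alpha)\le 0$, and this is exactly where the nefness of $(R^1f_*\calE')^{\vee}$ for simple finite $\calE'$ enters (Lemma \ref{lem:hmtp seq}~(2), i.e.\ Patakfalvi's semi-negativity of Hodge bundles for Du Bois families). Your proposal contains no substitute for this positivity input, so the key step of (2) is missing.
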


\begin{lem}\label{lem:hmtp seq}
Assume that $f:X\to S$ is a projective Du Bois family with $f(x)=s$.

(1) For any finite bundle $\calE$ on $X$, the sheaf $R^if_*\calE$ is locally free and the natural map 
$s^*R^{i}f_*\calE\to H^i(X_s,\calE|_{X_s})$ 
is an isomorphism for any $i\ge 0$.

(2) Assume that $k$ is an algebraically closed field and that $S$ is a projective smooth curve over $k$. Then for any simple finite bundle $\calE$ on $X$, the dual sheaf $(R^1f_*\calE)^{\vee}$ is nef.
\end{lem}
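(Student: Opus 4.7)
The plan is to reduce both assertions to the corresponding statements for the structure sheaf of a suitable finite \'etale cover of $X$, and then invoke the standard theory of Du Bois families. Since $\calE$ is a finite bundle on $X$ in characteristic $0$, Nori's theory (Remark \ref{rem:semi-simple}) provides a pointed finite torsor $\pi\colon P\to X$ under a finite (necessarily \'etale) $k$-group scheme $G$ such that $\pi^{*}\calE\simeq\calO_{P}^{\oplus r}$ with $r=\rank\calE$. In characteristic $0$ the normalized trace $\tfrac{1}{|G|}\mathrm{tr}_{\pi}\colon\pi_{*}\calO_{P}\to\calO_{X}$ splits the unit $\calO_{X}\hookrightarrow\pi_{*}\calO_{P}$; tensoring with $\calE$ and using the projection formula splits the adjunction $\calE\hookrightarrow\pi_{*}\pi^{*}\calE\simeq(\pi_{*}\calO_{P})^{\oplus r}$, exhibiting $\calE$ as an $\calO_{X}$-linear direct summand of $(\pi_{*}\calO_{P})^{\oplus r}$ via an idempotent $e$.

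Set $g:=f\circ\pi\colon P\to S$. This map is projective, and for each closed point $s'\in S$ the fiber $g^{-1}(s')=\pi^{-1}(X_{s'})$ is a finite \'etale cover of the Du Bois scheme $X_{s'}$. Since the Du Bois condition is \'etale local (Kov\'acs--Schwede), $g$ is itself a projective Du Bois family. Because $\pi$ is finite, $R^{i}f_{*}(\pi_{*}\calO_{P})=R^{i}g_{*}\calO_{P}$ for every $i$. We now invoke two standard facts about projective Du Bois families: (i) the sheaves $R^{i}g_{*}\calO_{P}$ are locally free on $S$ and commute with arbitrary base change (a consequence of the Du Bois theorem combined with cohomology and base change, cf.\ Koll\'ar--Kov\'acs); and (ii) when $S$ is a smooth projective curve over an algebraically closed field of characteristic $0$, the dual $(R^{1}g_{*}\calO_{P})^{\vee}$ is nef (the Fujita--Koll\'ar nefness theorem, in the form valid for Du Bois families).

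Applying $R^{i}f_{*}$ to the idempotent $e$ realizes $R^{i}f_{*}\calE$ as an $\calO_{S}$-linear direct summand of $(R^{i}g_{*}\calO_{P})^{\oplus r}$, hence locally free; since the base-change map $s^{*}R^{i}g_{*}\calO_{P}\to H^{i}(P_{s},\calO_{P_{s}})$ is an isomorphism and is natural in $e$, the corresponding map for $\calE$ is an isomorphism as well, proving (1). For (2), dualizing the direct summand realization shows that $(R^{1}f_{*}\calE)^{\vee}$ is a direct summand of $((R^{1}g_{*}\calO_{P})^{\vee})^{\oplus r}$, and direct summands of nef vector bundles on a smooth projective curve are again nef (every quotient of a summand is a quotient of the whole), yielding the claim. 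The main points requiring care will be items (i)--(ii) above: locating the sharp form of cohomology and base change and of the nefness theorem applicable to an arbitrary projective Du Bois family, and verifying that the Du Bois property really does transfer through the finite \'etale cover $\pi$ fiber by fiber.
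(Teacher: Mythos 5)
Your proposal is correct and follows essentially the same route as the paper: reduce to $\calO_P$ on the finite \'etale trivializing torsor by exhibiting $\calE$ as a direct summand of $(\pi_*\calO_P)^{\oplus r}$ (the paper does this via semi-simplicity of $\calC^N(X)$ rather than the explicit trace idempotent, but it is the same reduction), observe that $f\circ\pi$ is again a projective Du Bois family since $\pi$ is \'etale, and then quote Du Bois' theorem for local freeness and base change and Patakfalvi's semi-negativity theorem for the nefness of $(R^1(f\circ\pi)_*\calO_P)^{\vee}$. The only cosmetic difference is that the paper invokes the Lefschetz principle to reduce (2) to $k=\mathbb{C}$ before citing Patakfalvi, a detail you may wish to add.
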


\begin{proof}
(1) Since ${{\rm char}}k=0$, the category $\calC^{N}(X)$ is semi-simple~(Remark \ref{rem:semi-simple}), so we may assume that  $\calE=\pi_*\calO_P$ for some saturated finite torsor $\pi:P\to X$. By \cite[Theorem 12.11]{ha77}, it suffices to show that for any $i\ge 0$, the sheaf $R^if_*(\pi_*\calO_P)=R^i(f\circ\pi)_*\calO_P$ is locally free. However, since $f$ is Du Bois and $\pi$ is \'etale,  $f\circ\pi$ also defines a Du Bois family over $S$. Therefore, by \cite[Theorem 4.6]{du81}, the sheaves $R^i(f\circ\pi)_*\calO_P$ are locally free.

(2) By the Lefschetz principle, we may assume that $k=\mathbb{C}$. Since any quotient of a nef bundle is again nef, so it suffices to show that for any saturated finite torsor $\pi:(P,p)\to (X,x)$, $(R^1f_*(\pi_*\calO_P))^{\vee}=(R^1(f\circ\pi)_*\calO_P)^{\vee}$ is nef. It follows from \cite{pa15}.
\end{proof}

\begin{proof}[Proof of Corollary \ref{cor:hmtp seq}]
(1) From Proposition \ref{prop:base change for pi1^EN}, we may assume that $k$ is an algebraically closed field. By Theorem \ref{thm:hmtp seq}, it suffices to show that the condition (a) is satisfied. We will show that for any semifinite bundle $\calE$ on $X$, the natural map 
$s^*f_*\calE\to g_*t^*\calE$
is an isomorphism. Note that since the homotopy sequence for $\pi_1^N$ is exact, it is true for any finite bundle on $X$~\cite{zh13}. Thus we may assume that $\calE\in\calC^{EN}(X)\setminus\calC^N(X)$. Then there exists a finite subbundle $\calE'\subset\calE$ such that $\rank\calE'<\rank\calE$. We put $\calE''\overset{{\rm def}}{=}\calE/\calE'$.
Thus we are reduced to prove that the natural base-change map 
$s^*R^1f_*\calE'\to H^1(X_s,\calE'|_{X_s})$ 
is an isomorphism. Since $\calE'$ is finite, this follows from Lemma \ref{lem:hmtp seq}~(1).

(2) From Proposition \ref{prop:base change for pi1^EN}, we may assume that $k$ is an algebraically closed field. From the previous result and Theorem \ref{thm:hmtp seq}, it suffices to show that for any semifinite bundle $\calE$ on $X$, the push-forward sheaf $f_*\calE$ is again semifinite. This is true for any finite bundle $\calE$ on $X$~\cite{zh13}, so we may assume that $\calE\in\calC^{EN}(X)\setminus\calC^{N}(X)$. Then again there exists a simple finite subbundle $\calE'\subset\calE$ with $\rank\calE'<\rank\calE$. Put $\calE''\overset{{\rm def}}{=}\calE/\calE'$. By induction hypothesis, we may assume that $f_*\calE''$ is semifinite. Consider the following exact sequence
\begin{equation*}
0\to f_*\calE'\to f_*\calE\to f_*\calE''\overset{\alpha}{\to} R^1f_*\calE'. 
\end{equation*}
Since $\Ker\alpha\subset f_*\calE''$ and $f_*\calE''$ is locally free, the sheaf $\Ker\alpha$ is torsion-free. Hence $\Ker\alpha$ is locally free because $S$ is a smooth curve. Similarly for $\im\alpha$. The local-freeness of $f_*\calE''/\Ker\alpha\simeq\im\alpha$ implies that $\Ker\alpha$ is a subbundle of $f_*\calE''$. Note that  $\im\alpha$ is not necessarily a subbundle of $R^1f_*\calE'$, but there exists a subbundle $\overline{\im\alpha}$ of $R^1f_*\calE'$ containing $\im\alpha$ such that $\Deg(\im\alpha)=\Deg(\overline{\im\alpha})$ and $\rank(\im\alpha)=\rank(\overline{\im\alpha})$. The semistability of $f_*\calE''$ implies that $\mu(\Ker\alpha)\le\mu(f_*\calE'')=0$. On the other hand,
\begin{equation*}
\begin{aligned}
0&\ge\Deg(\Ker\alpha)=\Deg(f_*\calE'')-\Deg(\im\alpha)
=-\Deg(\im\alpha)\\
&=\Deg((\overline{\im\alpha})^{\vee})\ge\rank(\overline{\im\alpha})\cdot\mu_{{\rm min}}((R^1f_*\calE')^{\vee})\ge 0,
\end{aligned}
\end{equation*}
where for the last inequality, we use Lemma \ref{lem:hmtp seq}~(2).  Therefore, we have $\mu(\Ker\alpha)=0$, so again by the semistability of $f_*\calE''$, we find that $\Ker\alpha$ is semistable of degree 0. However, since the category $\calC^{EN}(S)$ is closed under taking subobjects in $\mathcal{S}(S)$, $\Ker\alpha$ belongs to $\calC^{EN}(S)$. Therefore, the exactness of the sequence
\begin{equation*}
0\to f_*\calE'\to f_*\calE\to \Ker\alpha\to 0
\end{equation*}
implies that the sheaf $f_*\calE$ is semifinite. This completes the proof.
\end{proof}

The same argument as above implies:

\begin{cor}
Assume that $f:X\to S$ is a projective Du Bois family over a smooth projective curve $S$ with $f(x)=s$. Then the following sequence is exact:
\begin{equation*}
\pi_1^{\uni}(X_s,x)\overset{t_*}{\to}\pi_1^{\uni}(X,x)\overset{f_*}{\to}\pi_1^{\uni}(S,s)\to 1.
\end{equation*}
\end{cor}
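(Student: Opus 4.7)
The plan is to imitate the proof of Corollary \ref{cor:hmtp seq}~(2) with $\calC^{EN}$ replaced throughout by $\calC^{\uni}$. Surjectivity of $f_*:\pi_1^{\uni}(X,x)\to\pi_1^{\uni}(S,s)$ is obtained exactly as in Lemma \ref{lem:right exactness}: the projection formula together with $f_*\calO_X=\calO_S$ makes $f^*:\calC^{\uni}(S)\to\calC^{\uni}(X)$ fully faithful, and its essential image is closed under subobjects because any subobject of a unipotent bundle inside $\calC^{EN}$ is again unipotent (its Jordan--H\"older factors in $\calC^{EN}$ are all isomorphic to $\calO_X$).

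Next I would establish a unipotent analogue of Theorem \ref{thm:hmtp seq}: the sequence is exact provided that for every saturated unipotent $G$-torsor $\pi:(P,p)\to(X,x)$ with $\pi_*\calO_P=\varinjlim_{\alpha}\calE_\alpha$, $\calE_\alpha\in\calC^{\uni}(X)$, the base-change maps $s^*f_*\calE_\alpha\to g_*t^*\calE_\alpha$ are isomorphisms and the sheaves $f_*\calE_\alpha$ are unipotent on $S$. The proof of Theorem \ref{thm:hmtp seq} uses only Lemma \ref{lem:exactness} and the universality of the Tannakian fundamental group, the latter holding for $\pi_1^{\uni}$ by Proposition \ref{prop:nilpotent}, so the same argument adapts verbatim. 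Both conditions are then verified by induction along a unipotent filtration $0=\calE^{(n)}\subset\dots\subset\calE^{(0)}=\calE$ with $\calE^{(i)}/\calE^{(i+1)}\simeq\calO_X$. The base case $\calE=\calO_X$ is immediate, since $f_*\calO_X=\calO_S$ and Lemma \ref{lem:hmtp seq}~(1) applied to the finite bundle $\calO_X$ supplies the base-change isomorphism. For the inductive step, writing $0\to\calO_X\to\calE\to\calE''\to 0$ one extracts the exact sequence
\begin{equation*}
0\to\calO_S\to f_*\calE\to f_*\calE''\stackrel{\alpha}{\to}R^1f_*\calO_X,
\end{equation*}
and the semistability/nefness argument of Corollary \ref{cor:hmtp seq}~(2), using the nefness of $(R^1f_*\calO_X)^{\vee}$ from Lemma \ref{lem:hmtp seq}~(2), places $\Ker\alpha$ in $\calC^{EN}(S)$. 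Since $\Ker\alpha\subset f_*\calE''$ and $f_*\calE''$ is unipotent by induction, and since $\calC^{\uni}(S)$ is closed under subobjects in $\calC^{EN}(S)$, we get $\Ker\alpha\in\calC^{\uni}(S)$, whence $f_*\calE\in\calC^{\uni}(S)$ from $0\to\calO_S\to f_*\calE\to\Ker\alpha\to 0$. Compatibility with base change propagates along the induction by the five-lemma together with Lemma \ref{lem:hmtp seq}~(1) applied to $\calO_X$.

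The main obstacle is the interplay between the semistability argument (which a priori only produces an object of $\calC^{EN}(S)$) and the unipotency requirement: the key observation is that $\calC^{\uni}$ is closed under subobjects inside $\calC^{EN}$, so that $\Ker\alpha$ automatically upgrades from semifinite to unipotent once it is recognised as a subobject of the unipotent $f_*\calE''$. No additional Du Bois or nefness input is needed beyond what was already invoked in Corollary \ref{cor:hmtp seq}~(2).
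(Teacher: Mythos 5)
Your proposal is correct and follows essentially the same route as the paper, whose own proof of this corollary is literally ``the same argument as above'': you rerun Lemma \ref{lem:right exactness}, Theorem \ref{thm:hmtp seq} and Corollary \ref{cor:hmtp seq}(2) with $\calC^{EN}$ replaced by $\calC^{\uni}$, inducting on a filtration with quotients $\calO_X$ and using Lemma \ref{lem:hmtp seq} applied to the simple finite bundle $\calO_X$, together with the observation that $\calC^{\uni}$ is closed under subobjects in $\calC^{EN}$. The only place deserving one extra line is the surjectivity step: after Lemma \ref{lem:right exactness} gives $\calE'\simeq f^*f_*\calE'$ with $f_*\calE'$ semifinite, you should note that $f_*\calE'$ is in fact unipotent on $S$ (any finite indecomposable Jordan--H\"older factor $\calF$ of $f_*\calE'$ pulls back to a trivial subquotient of the unipotent $\calE'$, and $\calF\simeq f_*f^*\calF$ then forces $\calF\simeq\calO_S$), which is the descent your parenthetical remark on $X$ does not quite supply.
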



\appendix
\section{APPENDIX}

In this section, we prove some basic facts on tensor categories which are used in this paper. We first show the following:

\begin{prop}\label{prop:appendix1}
Let $\mathcal{C}$ be a $k$-linear abelian rigid tensor category and let $\mathcal{D}$ be a $k$-linear abelian tensor category.  Let $F:\mathcal{C}\to\mathcal{D}$ be an exact tensor functor. Assume that there exists a family $A\subset\Ob(\mathcal{C})$ satisfying the following conditions:

(a) The unit object $\mathbb{I}$ belongs to $A$.

(b) For any $V,W\in A$, the map $F:\Hom_{\mathcal{C}}(V,W)\to\Hom_{\mathcal{D}}(F(V),F(W))$ is bijective.

(c) For any $V\in\Ob(\mathcal{C})$, there exists an $W\in A$ such that $V\subset W$. 

Then the functor $F$ is fully faithful.
\end{prop}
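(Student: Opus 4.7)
The plan is to reduce the general full faithfulness statement to the special case $V=\mathbb{I}$ using rigidity, and then to chase a diagram obtained from embedding an arbitrary object into a pair of objects of $A$.

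First I would exploit the rigidity of $\mathcal{C}$. For any $V,W\in\Ob(\mathcal{C})$, the usual duality adjunction gives a canonical isomorphism
\[
\Hom_{\mathcal{C}}(V,W)\simeq \Hom_{\mathcal{C}}(\mathbb{I},V^{\vee}\otimes W).
\]
Since $F$ is a tensor functor, it sends dualizable objects to dualizable objects; as every object of $\mathcal{C}$ is dualizable, $F(V)$ is dualizable in $\mathcal{D}$ with $F(V)^{\vee}\simeq F(V^{\vee})$, so the corresponding adjunction is available in $\mathcal{D}$ even without assuming $\mathcal{D}$ rigid:
\[
\Hom_{\mathcal{D}}(F(V),F(W))\simeq \Hom_{\mathcal{D}}(\mathbb{I},F(V^{\vee}\otimes W)).
\]
Thus, setting $U\overset{{\rm def}}{=}V^{\vee}\otimes W$, it suffices to prove that for every $U\in\Ob(\mathcal{C})$ the natural map
\[
F_{U}:\Hom_{\mathcal{C}}(\mathbb{I},U)\to \Hom_{\mathcal{D}}(\mathbb{I},F(U))
\]
is bijective.

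To prove this, I would apply condition (c) twice. Choose $W_{1}\in A$ with an inclusion $U\hookrightarrow W_{1}$ in $\mathcal{C}$, and then choose $W_{2}\in A$ containing $W_{1}/U$. Composing yields a left-exact sequence $0\to U\to W_{1}\to W_{2}$ in $\mathcal{C}$. Applying the exact tensor functor $F$ and then the left exact functor $\Hom(\mathbb{I},-)$ on each side, I obtain a commutative diagram with exact rows
\[
\begin{CD}
0@>>>\Hom_{\mathcal{C}}(\mathbb{I},U)@>>>\Hom_{\mathcal{C}}(\mathbb{I},W_{1})@>>>\Hom_{\mathcal{C}}(\mathbb{I},W_{2})\\
@. @VF_{U}VV @VF_{W_{1}}VV @VF_{W_{2}}VV\\
0@>>>\Hom_{\mathcal{D}}(\mathbb{I},F(U))@>>>\Hom_{\mathcal{D}}(\mathbb{I},F(W_{1}))@>>>\Hom_{\mathcal{D}}(\mathbb{I},F(W_{2})).
\end{CD}
\]
Because $\mathbb{I},W_{1},W_{2}\in A$ by conditions (a) and (c), hypothesis (b) forces $F_{W_{1}}$ and $F_{W_{2}}$ to be bijective; a routine diagram chase (or the five lemma) then forces $F_{U}$ to be bijective as well, which completes the proof.

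The only point that requires any care, rather than a genuine obstacle, is the identification $F(V)^{\vee}\simeq F(V^{\vee})$ in $\mathcal{D}$ used in the first step: this follows from the fact that tensor functors preserve dualizability together with the rigidity of $\mathcal{C}$, so that the adjunction between $F(V)\otimes (-)$ and $F(V^{\vee})\otimes(-)$ is available in $\mathcal{D}$. Once this is recorded, the remainder is formal.
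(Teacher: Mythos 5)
Your argument is correct and coincides with the paper's own proof: both reduce via rigidity to the bijectivity of $\Hom(\mathbb{I},U)\to\Hom(\mathbb{I},F(U))$, use condition (c) twice to produce a left-exact sequence $0\to U\to W_1\to W_2$ with $W_1,W_2\in A$, and conclude by a diagram chase using (a) and (b). Your extra remark that $\mathcal{D}$ need not be rigid because $F$ preserves dualizability is a point the paper leaves implicit, but the route is the same.
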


\begin{proof}
Since $\mathcal{C}$ is rigid, it suffices to show that for any object $V\in\Ob(\mathcal{C})$, the map $F:\Hom_{\mathcal{C}}(\mathbb{I},V)\to\Hom_{\mathcal{D}}(\mathbb{I},F(V))$ is bijective. By assumption (c), there exists an exact sequence in $\mathcal{C}$
\begin{equation*}
0\to V\to W\to U
\end{equation*}
with $W,U\in A$. Hence, we obtain the following commutative diagram of exact sequences in $\Vecf$
\begin{equation*}
\begin{CD}
0@>>>\Hom_{\mathcal{C}}(\mathbb{I},V)@>>>\Hom_{\mathcal{C}}(\mathbb{I},W)@>>>\Hom_{\mathcal{C}}(\mathbb{I},U)\\
@. @VVV @V\simeq VV @V\simeq VV\\
0@>>>\Hom_{\mathcal{D}}(\mathbb{I},F(V))@>>>\Hom_{\mathcal{D}}(\mathbb{I},F(W))@>>>\Hom_{\mathcal{D}}(\mathbb{I},F(U))
\end{CD}
\end{equation*}
where the second and third vertical arrows are bijective from the assumption (a), (b). Thus so is the first one. 
\end{proof}

Next we prove the following proposition, which is used to prove Proposition \ref{prop:quotient category}:

\begin{prop}\label{prop:appendix2}
Let $I$ be a filtered category with initial object $0$, and $\{(\mathcal{C}_i, \omega_i), \pi_{ij}\}_{i\in I}$ be an inductive-system of neutral Tannakian categories over $k$ together with fibre functors $\omega_i:\mathcal{C}_i\to\Vecf$, where $\pi_{ij}:\mathcal{C}_i\to\mathcal{C}_j (i\le j)$ are $k$-linear exact tensor functors with $\omega_j\circ\pi_{ij}=\omega_i$.  We define a category $\mathcal{C}$ as follows. Objects are all pairs $(i, V)$ where $i\in I$ and $V\in\Ob(\mathcal{C}_i)$. For each objects $(i, V), (j, W)$, the set of morphisms is defined by 
\begin{equation*}
\Hom((i, V), (j, W))\overset{{\rm def}}{=}\varinjlim_{k\ge i, j}\Hom_{\mathcal{C}_k}(\pi_{ik}V, \pi_{jk}W).
\end{equation*}
The composition rules are defined as follows. Take any $\phi_{pq}=[f_{pq}]\in\Hom((p, V_p), (q, V_q))$,$(p, q)=(i, j), (j, k)$. Here we fix $f_{ij}\in\Hom_{\mathcal{C}_l}(\pi_{il}V_i,
\pi_{jl}V_j)$ and $f_{jk}\in\Hom_{\mathcal{C}_m}(\pi_{jm}V_j,\pi_{km}V_k)$, $l\ge i,j; m\ge j,k$. We put 
\begin{equation*}
\phi_{jk}\circ\phi_{ij}\overset{{\rm def}}{=}[\pi_{mn}f_{jk}\circ\pi_{ln}f_{ij}]\in \Hom((i, V_i), (k, V_k)).
\end{equation*} 
where $n\ge l, m$. For each $i\in I$, we define a functor $\pi_i:\mathcal{C}_i\to\mathcal{C}$ by  putting $\pi_i(V)\overset{{\rm def}}{=}(i, V)$. 

Assume the following conditions:

(a)for each $i\le j$, the functor $\pi_{ij}$ is fully faithful.

(b)for any finite set $S\subset \Ob(I)$, the subcategory $I_{\ge S}\overset{{\rm def}}{=}\{j\in I; j\ge i ( i\in S)\}$ has an initial object. We denote it by $\cup S$.
Then the following holds:

(1) The category $\mathcal{C}$ can be uniquely endowed with a structure of $k$-linear abelian category so that the functors $\pi_i (i\in I)$ are $k$-linear exact functors. 

(2) The category $\mathcal{C}$ can be uniquely endowed with a structure of tensor category so that the functors $\pi_i (i\in I)$ are tensor functors. Furthermore, under this tensor structure, the category $\mathcal{C}$ is rigid.

(3) Under the above structure (1) and (2), there is a unique neutral fibre functor $\omega$ of $\mathcal{C}$ such that $\omega\circ\pi_i=\omega_i$ for all $i\in I$. 

(4) Assume that for each $i\le j$, $\pi_{ij}:\mathcal{C}_i\hookrightarrow\mathcal{C}_j$ is closed under taking subobjects in $\mathcal{C}_j$. Then each $\pi_i$ induces a faithfully flat morphism $\pi(\mathcal{C},\omega)\to\pi(\mathcal{C}_i,\omega_i)$ and there is an isomorphism of $k$-affine group schemes
\begin{equation*}
\pi(\mathcal{C},\omega)\stackrel{\simeq}{\to}\varprojlim_{i\in I}\pi(\mathcal{C}_i,\omega_i),
\end{equation*}
where $\omega$ is a fiber functor of $\mathcal{C}$ in (3).
\end{prop}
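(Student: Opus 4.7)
The first key observation is that hypothesis (a) collapses the filtered colimit defining $\Hom$-sets: since each $\pi_{ij}$ is fully faithful, all the transition maps $\Hom_{\mathcal{C}_k}(\pi_{ik}V, \pi_{jk}W) \to \Hom_{\mathcal{C}_l}(\pi_{il}V, \pi_{jl}W)$ appearing in the colimit are bijections, and combined with (b) this yields a natural identification
\begin{equation*}
\Hom_{\mathcal{C}}((i,V),(j,W)) \simeq \Hom_{\mathcal{C}_{i\cup j}}(\pi_{i,i\cup j}V,\, \pi_{j,i\cup j}W)
\end{equation*}
compatible with the composition rule. With this in hand every construction can be carried out inside a single $\mathcal{C}_{i\cup j}$, and independence of choices will follow from full faithfulness and exactness of the transition functors.

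For (1) I would set $(i,V) \oplus (j,W) \overset{{\rm def}}{=} (i\cup j, \pi_{i,i\cup j}V \oplus \pi_{j,i\cup j}W)$, and for a morphism $\phi$ represented by $f:\pi_{i,i\cup j}V \to \pi_{j,i\cup j}W$ define $\Ker\phi \overset{{\rm def}}{=} (i\cup j, \Ker f)$ and $\Cok\phi \overset{{\rm def}}{=} (i\cup j, \Cok f)$. Replacing $i\cup j$ by a larger $l$ produces $(l, \pi_{i\cup j,l}\Ker f)$ and $(l, \pi_{i\cup j,l}\Cok f)$ by exactness, which are canonically isomorphic in $\mathcal{C}$ to the original objects because $\pi_{i\cup j,l}$ is fully faithful. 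All the abelian-category axioms then reduce to those of the individual $\mathcal{C}_{i\cup j}$, and $\pi_i$ is evidently $k$-linear exact. For (2) and (3) I would proceed analogously: $(i,V) \otimes (j,W) \overset{{\rm def}}{=} (i\cup j, \pi_{i,i\cup j}V \otimes \pi_{j,i\cup j}W)$, unit $(0,\mathbb{I}_0)$, dual $(i,V)^{\vee} \overset{{\rm def}}{=} (i, V^{\vee})$, and $\omega((i,V))\overset{{\rm def}}{=}\omega_i(V)$; the coherence data and rigidity structure are transported from the $\mathcal{C}_i$'s, the compatibility $\omega_j\circ\pi_{ij}=\omega_i$ makes $\omega$ well-defined, and exactness, tensor-preservation and faithfulness of $\omega$ reduce to those of the $\omega_i$. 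Uniqueness in each case is forced by the condition that the $\pi_i$ be $k$-linear exact tensor functors with $\omega\circ\pi_i=\omega_i$.

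For (4), which is the heart of the proposition, I would first verify that the extra hypothesis passes to $\pi_i:\mathcal{C}_i\hookrightarrow\mathcal{C}$: any subobject of $\pi_i(V)$ in $\mathcal{C}$ is represented at some stage $l\ge i$ as a subobject of $\pi_{il}V$ in $\mathcal{C}_l$, which by assumption lies in the essential image of $\pi_{il}$ and hence comes from a subobject of $V$. Lemma \ref{lem:exactness}~(i) then gives that each $\pi_i^{*}:\pi(\mathcal{C},\omega)\to \pi(\mathcal{C}_i,\omega_i)$ is faithfully flat, and compatibility with the transitions yields a homomorphism $\Phi:\pi(\mathcal{C},\omega)\to \varprojlim_{i}\pi(\mathcal{C}_i,\omega_i)$. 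To show $\Phi$ is an isomorphism I would pass to coordinate Hopf algebras: $k[\varprojlim_i\pi(\mathcal{C}_i,\omega_i)]=\varinjlim_i k[\pi(\mathcal{C}_i,\omega_i)]$, and every finite-dimensional comodule over this ind-Hopf algebra is defined over some $k[\pi(\mathcal{C}_j,\omega_j)]$, hence corresponds via Tannaka duality for $\mathcal{C}_j$ to an object $\pi_j(V)\in\mathcal{C}$. Together with the collapse of the $\Hom$-colimit observed at the outset, this constructs a tensor equivalence $\Rep(\varprojlim_i\pi(\mathcal{C}_i,\omega_i))\simeq\mathcal{C}$ inverse to $\Phi^*$, and therefore $\Phi$ is an isomorphism.

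The main technical obstacle I anticipate is precisely this last step: checking that the equivalence coming from the ind-Hopf algebra coincides with $\Phi^*$ on both objects and morphisms, and that nothing in $\mathcal{C}$ is missed. The earlier parts are bookkeeping made clean by the full-faithfulness hypothesis (a) together with the finite-joins hypothesis (b).
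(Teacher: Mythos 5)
Your proposal is correct, and for parts (1)--(3) it follows essentially the same path as the paper: the same definitions of $\oplus$, $\Ker$, $\Cok$, $\otimes$, duals and $\omega$ at the stage $i\cup j$, with well-definedness and uniqueness extracted from hypotheses (a) and (b) exactly as in the paper's proof. The only genuine divergence is in part (4). The paper writes each $\mathcal{C}_i$ as a filtered union of finitely generated full tensor subcategories $S_{ij}$ closed under subobjects, observes that $\mathcal{C}=\bigcup_{i,j}\pi_i(S_{ij})$ with each $\pi_i(S_{ij})$ closed under subobjects in $\mathcal{C}$, and then invokes the standard identity $\pi(\mathcal{C},\omega)=\varprojlim_{i,j}\pi(S_{ij})=\varprojlim_i\pi(\mathcal{C}_i,\omega_i)$; you instead prove faithful flatness of each $\pi(\mathcal{C},\omega)\to\pi(\mathcal{C}_i,\omega_i)$ directly via Lemma \ref{lem:exactness}(i) and then establish the limit isomorphism by hand, using $k[\varprojlim_i\pi(\mathcal{C}_i,\omega_i)]=\varinjlim_i k[\pi(\mathcal{C}_i,\omega_i)]$ and the fact that every finite-dimensional comodule over an ind-Hopf algebra descends to a finite stage. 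These are the same mathematics at different levels of unwinding: your comodule argument is precisely the proof of the folklore fact the paper cites implicitly, so your version is more self-contained (and you are more explicit than the paper in checking that the essential image of $\pi_i$ is closed under subobjects in $\mathcal{C}$, which the paper asserts with only the phrase ``by assumption''), while the paper's version is shorter by delegating to the standard pro-limit description of Tannakian fundamental groups. For completeness you do need the injectivity of the transition maps $k[\pi(\mathcal{C}_i,\omega_i)]\to k[\pi(\mathcal{C}_j,\omega_j)]$ (i.e.\ faithful flatness of $\pi(\mathcal{C}_j,\omega_j)\to\pi(\mathcal{C}_i,\omega_i)$, again from Lemma \ref{lem:exactness}(i) applied to $\pi_{ij}$) in order to descend the comodule axioms to a finite stage, but this is available under the hypothesis of (4), so there is no gap.
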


\begin{proof}
(1) We first show that $\mathcal{C}$ have a finite direct product. For any objects $(i_1, V_1), (i_2, V_2)\in\Ob(\mathcal{C})$, we put $k\overset{{\rm def}}{=}i_1\cup i_2$. We show that $(k, \pi_{i_2, k}V_1\oplus \pi_{i_2, k}V_2)$ together with morphisms $[{\rm pr}_q:\pi_{i_1, k}V_1\oplus \pi_{i_2, k}V_2\to\pi_{i_q, k}V_q] (q=1, 2)$ is a direct product of these $(i_q, V_q)(q=1,2)$. Indeed, take two morphisms $g_q=[f_q]:(j, W)\to (i_q, V_q)$ with $f_q\in\Hom_{\mathcal{C}_m}(\pi_{jm}W, \pi_{i_q, m}V_q)$, $m\ge i_1, i_2, j$. Then there exist a unique morphism $f:\pi_{jm}W\to \pi_{i_1, m}V_1\oplus \pi_{i_2, m}V_2$ in $\mathcal{C}_m$ such that ${\rm pr_q}\circ f=f_q (q=1,2)$. Fix an index $n\ge m, k$. Then
 since $\pi_{mn}$ is fully faithful, $g\overset{{\rm def}}{=}[\pi_{mn}f]$ is a unique morphism of $(j, W)$ into $(k, \pi_{i_1, k}V_1\oplus \pi_{i_2, k}V_2))$ such that ${\rm pr}_q\circ g=g_q (q=1,2)$. Therefore, the categroy $\mathcal{C}$ has a additive structure.

Next we show that $\mathcal{C}$ has kernels and cokernels such that, for any morphism $f$, the natural map $\rm{Coim}(f)\to\rm{Im}(f)$ is isomorphism. It suffices to show that any morphism $f:V\to W$ in $\mathcal{C}_i$, $\pi_{ij}(\Ker(f))=\Ker(\pi_{ij}f)$ and $\pi_{ij}(\Cok(f))=\Cok(\pi_{ij}f)$ in $\mathcal{C}_j$ for any $j\ge i$. It follows from the exactness of $\pi_{ij}$.

Therefore, we have endowed the category $\mathcal{C}$ with a structure of a $k$-linear abelian category. Under this structure, the functors $\pi_i$ are $k$-linear exact functors.

Finally we show the uniqueness. Assume that $\mathcal{C}$ has been endowed with a structure of $k$-linear abelian category so that $\pi_i (i\in I)$ are $k$-linear exact functors. Take any two objects $(i_1, V_1), (i_2, V_2)\in\Ob(\mathcal{C})$.
By assumption, there is a direct product ${\rm pr}_q:(i_1, V_1)\oplus(i_2, V_2)\overset{{\rm def}}{=}(j, W)\to (i_q, V_q) (q=1,2)$ of these. By definition, these projections are represented by morphisms $p_q:\pi_{jm}W\to\pi_{i_q, m}V_q$ in $\mathcal{C}_m$ for some $m\ge j, i_1, i_2$. Since the functor $\pi_{m}:\mathcal{C}_m\to\mathcal{C}$ is additive, we find that $\pi_{jm}W\simeq \pi_{i_1, m}V_1\oplus\pi_{i_2, m}V_2$ in $\mathcal{C}_m$, which implies that $(i_1, V_1)\oplus(i_2, V_2)$ is isomorphic to $(m, \pi_{i_1, m}V_1\oplus \pi_{i_2, m}V_2)$ together with morphisms $[{\rm pr}_q:\pi_{i_1, m}V_1\oplus \pi_{i_2, m}V_2\to\pi_{i_q, m}V_q] (q=1, 2)$. Since $m\ge i_1, i_2$ factor through $i_1\cup i_2=:k$, we find that $(i_1, V_1)\oplus(i_2, V_2)$ is isomorphic to $(k, \pi_{i_1, k}V_1\oplus \pi_{i_2, k}V_2)$ together with morphisms $[{\rm pr}_q:\pi_{i_1, k}V_1\oplus \pi_{i_2, k}V_2\to\pi_{i_q, k}V_q] (q=1, 2)$.Therefore, the additive structure of $\mathcal{C}$ is uniquely determined. The structure of $k$-linear abelian category is uniquely determined by the condition that the functors $\pi_i$ are $k$-linear exact functors. 

(2)We define the functor $\otimes:\mathcal{C}\times\mathcal{C}\to\mathcal{C}$ by putting:
\begin{equation*}
(i, V)\otimes (j, W)\overset{{\rm def}}{=}(i\cup j, \pi_{i, i\cup j}V\otimes \pi_{j, i\cup j}W). 
\end{equation*}
We will show that it gives a tensor structure on $\mathcal{C}$. For each $i$, we denote by $\phi_i$ the associativity constraint $V\otimes (W\otimes U)\simeq (V\otimes W)\otimes U$ for $(\mathcal{C}_i,\otimes)$. We define an associativity constraint $\Phi$ for $(\mathcal{C},\otimes)$ as follows. Let $(i_k, V_k)\in\Ob(\mathcal{C})(k=1,2,3)$. We put $i_{12}\overset{{\rm def}}{=}i_1\cup i_2$ and so on. Now we define an isomorphim
\begin{equation*}
\begin{aligned}
\Phi_{123}:&(i_{123},\pi_{i_1,i_{123}}V_1\otimes(\pi_{i_2,i_{123}}V_2\otimes\pi_{i_3,i_{123}}V_3))=(i_1,V_1)\otimes ((i_2,V_2)\otimes (i_3,V_3))\\
&\to((i_1,V_1)\otimes (i_2,V_2))\otimes (i_3,V_3)=(i_{123},(\pi_{i_1,i_{123}}V_1\otimes\pi_{i_2,i_{123}}V_2)\otimes\pi_{i_3,i_{123}}V_3)
\end{aligned}
\end{equation*}
by putting $\Phi_{123}\overset{{\rm def}}{=}[\phi_{i_{123}}]$. We define $\Phi$ as the collection of $\Phi_{123}$. Since all $(\mathcal{C}_i,\otimes,\phi_i)$ satisfy the pentagon axiom, so is $(\mathcal{C},\otimes,\Phi)$. Similarly, the commutativity constraint $\Psi$ for $(\mathcal{C},\otimes)$ can be defined naturally and $(\mathcal{C},\otimes,\Phi,\Psi)$ satisfies the hexagon axiom. Let $(1,u)$ is a unit object of $\mathcal{C}_0$ with $u:1\stackrel{\simeq}{\to}1\otimes 1$. Then the isomorphim in $\mathcal{C}$
\begin{equation*}
[u]:(0,1)\to(0,1)\otimes(0,1)=(0,1\otimes 1)
\end{equation*}
makes $(0,1)\in\Ob(\mathcal{C})$ the unit object of $(\mathcal{C},\otimes)$. Therefore, the category $\mathcal{C}$ is a tensor category.

Under this tensor structure, the natural embedding $\pi_i:\mathcal{C}_i\to\mathcal{C}$ is a tensor functor. Furthermore, for each $(i,V)\in\Ob(\mathcal{C})$, the object $(i,V^{\vee})$ in $\mathcal{C}$ gives the dual object $(i,V)^{\vee}$. Indeed, it follows the following remark. Let $\mathcal{A},\mathcal{B}$ be tensor categories and $F:\mathcal{A}\to\mathcal{B}$ be a fully faithful tensor functor. Let $X,Y\in\Ob(\mathcal{A})$. Then $Y=X^{\vee}$ if and only if $F(Y)=F(X)^{\vee}$.

Finally, we show the uniqueness. Assume that $\mathcal{C}$ has a structure of a tensor category so that the functors $\pi_i:\mathcal{C}_i\to\mathcal{C}$ are tensor functors. Then for each $(i,V),(j,W)\in\Ob(\mathcal{C})$, we have
\begin{equation*}
\begin{aligned}
(i,V)\otimes (j,W)&=(i\cup j,\pi_{i,i\cup j}V)\otimes (i\cup j,\pi_{j,i\cup j}W)=\pi_{i\cup j}(\pi_{i,i\cup j}V)\otimes\pi_{i\cup j}(\pi_{j,i\cup j}W)\\
&=\pi_{i\cup j}(\pi_{i,i\cup j}V\otimes\pi_{j,i\cup j}W),
\end{aligned}
\end{equation*}
where the first equality is given by the canonical isomorphisms. Thus the tensor structure of $\mathcal{C}$ is uniquely determined by $\pi_i$ and the tensor structure of $\mathcal{C}_i$.

(3) We define a functor $\omega:\mathcal{C}\to\Vecf$ by putting $\omega(i,V)\overset{{\rm def}}{=}\omega_{i}(V)$. It is well-defined since $\omega_j\circ\pi_{ij}=\omega_i$. Furthermore, it is a tensor functor.
Note that a sequence
\begin{equation*}
0\to (i,V)\to (j,W)\to (k,U)\to 0
\end{equation*}
is exact in $\mathcal{C}$ if and only if the sequence in $\mathcal{C}_{i\cup j\cup k}$
\begin{equation*}
0\to\pi_{i,i\cup j\cup k}V\to\pi_{k,i\cup j\cup k}W\to\pi_{k,i\cup j\cup k}U\to 0
\end{equation*}
is exact in $\mathcal{C}_{i\cup j\cup k}$. Thus the functor $\omega$ is exact. Since $\mathcal{C}$ is $k$-linear abelian rigid tensor category, the faithfulness is also fulfilled~\cite[Corollary 2.10]{de90}. Thus it defines a fiber functor.

(4) For any $i\in I$, let $S_{ij}\subset\mathcal{C}_i(j\in J_i)$ be an inductive system of full  tensor subcategories which are closed under taking subobjects in $\mathcal{C}_i$ so that $\mathcal{C}_i=\cup_j S_{ij}$. Let $\pi(\mathcal{C}_i,\omega_i)\twoheadrightarrow\pi(S_{ij})$ be the corresponding quotient. Then we have $\pi(\mathcal{C}_i,\omega_i)=\varprojlim_{j\in J_i}\pi(S_{ij})$. Now we have $\mathcal{C}=\cup_{i}\pi_i(\mathcal{C}_i)=\cup_{i}\cup_{j}\pi_i(S_{ij})$. Since each $\pi_{i}(S_{ij})$ is closed under taking subobjects in $\mathcal{C}$ by assumption, we have 
\begin{equation*}
\pi(\mathcal{C},\omega)=\varprojlim_{i\in I,j\in J_i}\pi(S_{ij})=\varprojlim_{i\in I}\varprojlim_{j\in J_i}\pi(S_{ij})=\varprojlim_{i\in I}\pi(\mathcal{C}_i,\omega_i).
\end{equation*}
\end{proof}

\section*{ACKNOWLEDGMENT}

The author thanks Professor Takao Yamazaki for a lot of clarifying discussions and encouragements and for checking and correcting this paper in many times. The author thanks Professor Takuya Yamauchi for careful reading and giving him comments on this paper. The author thanks the referee for making time available to this paper and for giving him useful comments and suggestions.

\section*{FUNDING}

The author is supported by JSPS, Grant-in-Aid for Scientific Research for JSPS fellows (16J02171).

\small

\end{document}